\documentclass{amsart}

\usepackage{amsmath}    
\usepackage{graphicx}   
\usepackage{epstopdf}
\usepackage{verbatim}   
\usepackage{amssymb}
\usepackage{amsthm}
\usepackage{mathrsfs}
\usepackage{bbm}
\usepackage{exscale}
\usepackage{enumerate}

\newtheorem{theorem}{Theorem}
\newtheorem{lemma}[theorem]{Lemma}
\newtheorem{corollary}[theorem]{Corollary}
\newtheorem{proposition}[theorem]{Proposition}

\newtheorem{remark}[theorem]{Remark}
\theoremstyle{remark}

\theoremstyle{definition}
\newtheorem{example}[theorem]{Example}

\newcommand{\finsum}[3]{
\underset{#1=#2}{\overset{#3}\sum}}

\newcommand{\dint}{
\displaystyle\int}

\newcommand{\dsum}{
\displaystyle\sum}

\newcommand{\dpiifrac}{
\dfrac{1}{2\pi}}

\newcommand{\inflim}[1]{
\underset{#1\to\infty}\lim}

\newcommand{\zsum}[1]{
\underset{#1\in\Z}\dsum}
\newcommand{\zsumzero}[1]{
\underset{#1\neq0}\dsum}

\newcommand{\zsumn}[1]{
\underset{#1\in\Z^n}\dsum}
\newcommand{\zsumd}[1]{
\underset{#1\in\Z^d}\dsum}
\newcommand{\zsumnzero}[1]{
\underset{#1\neq 0}\dsum}

\newcommand{\sgn}{
\textnormal{sgn}}

\newcommand{\R}{
\mathbb{R}}

\newcommand{\N}{
\mathbb{N}}
\newcommand{\Z}{
\mathbb{Z}}

\newcommand{\supp}{
\textnormal{supp}}

\newcommand{\sinc}{
\textnormal{sinc}}
\newcommand{\schwartz}{
\mathscr{S}}
\newcommand{\bracket}[1]{
\left\langle#1\right\rangle}

\newcommand{\I}{
\mathscr{I}}

\newcommand{\phica}{
\widehat{\phi_{\alpha,c}}}

\newcommand{\scamfhat}{
\widehat{\I_{\alpha,c}^mf}}

\newcommand{\scamf}{
\I_{\alpha,c}^mf}

\newcommand{\scafhat}[1]{
\widehat{\I_{\alpha,c}^{#1}f}}

\newcommand{\scahat}{
\widehat{\I_{\alpha,c}f}}

\newcommand{\Lachat}{
\widehat{L_{\alpha,c}}}

 \newcommand{\eps}{
 \varepsilon}

 \newcommand{\floor}[1]{
 \lfloor#1\rfloor}
 \newcommand{\ceiling}[1]{
 \lceil#1\rceil}

 \hyphenation{band-limit-ed}
\hyphenation{Band-limit-ed}
 
 \title{Cardinal Interpolation With General Multiquadrics}


\author{Keaton Hamm}
\address{Department of Mathematics, Vanderbilt University,Nashville, TN, 37240, USA}
\email{keaton.hamm@vanderbilt.edu} 

\author{Jeff Ledford}
\address{Department of Mathematics, Virginia Commonwealth University, Richmond, VA, 23284, USA}
\email{jpledford@vcu.edu}

 \thanks{The first author was partially supported by National Science Foundation grant DMS 1160633.  The second author was partially supported by the 2014 Workshop in Analysis and Probability at Texas A\& M University.}

\begin{document}

\begin{abstract}

This paper studies the cardinal interpolation operators associated with the general multiquadrics, $\phi_{\alpha,c}(x) = (\|x\|^2+c^2)^\alpha$, $x\in\R^d$.  These operators take the form
$$\I_{\alpha,c}\mathbf{y}(x) = \zsumd{j}y_jL_{\alpha,c}(x-j),\quad\mathbf{y}=(y_j)_{j\in\Z^d},\quad x\in\R^d,$$
where $L_{\alpha,c}$ is a fundamental function formed by integer translates of $\phi_{\alpha,c}$ which satisfies the interpolatory condition $L_{\alpha,c}(k) = \delta_{0,k},\; k\in\Z^d$.

We consider recovery results for interpolation of bandlimited functions in higher dimensions by limiting the parameter $c\to\infty$.  In the univariate case, we consider the norm of the operator $\I_{\alpha,c}$ acting on $\ell_p$ spaces as well as prove decay rates for $L_{\alpha,c}$ using a detailed analysis of the derivatives of its Fourier transform, $\widehat{L_{\alpha,c}}$.

\subjclass[2010]{41A05 \and 41A30 \and 41A63}
 \keywords{Cardinal Interpolation \and General Multiquadric \and Cardinal Functions \and Paley-Wiener Functions}
\end{abstract}
  \maketitle
\allowdisplaybreaks


\section{Introduction}\label{SECIntroduction}

The term {\em cardinal interpolation} refers to interpolation of data given at the integer lattice (or multi-integer lattice in higher dimensions).  It was I. J. Schoenberg's work on cardinal spline interpolation that brought about an intense study of the subject.  Many avenues of study have been explored, including forming interpolation operators from translates of certain radial basis functions (RBFs).  Works by Buhmann, Baxter, Riemenschneider, and Sivakumar \cite{Baxter,Buhmann,RiemSiva} (see also \cite{RS2} and references therein) have explored many cardinal interpolation operators of this type.  Some of the radial basis functions that have been considered are the Gaussian kernel, the thin plate spline, the Hardy multiquadric, and the inverse multiquadric.

Radial basis cardinal interpolation also enjoys a strong connection with classical sampling theory, as evidenced by much of the aforementioned literature.  This connection continues to be explored in recent developments by the second author \cite{Ledford,Ledford_ellp}, and by parts of this article.  As this is one of the most interesting aspects of cardinal interpolation, we provide some of the motivation.  Recall the one-dimensional Whittaker--Kotelnikov--Shannon Sampling Theorem, which states that a bandlimited function, $f$, (say with band size $\pi$) can be recovered uniformly by the series $\sum_{j\in\Z}f(j)\sinc(x-j)$, where the $\sinc$ function is suitably defined so that it takes value 1 at the origin, and 0 at all the other integers.  One observation about this series is that the $\sinc$ function decays slowly (as $|x|^{-1}$), and so to approximate the series by truncation for example, one would have to use quite a lot of data points of $f$ to get a reasonable degree of accuracy.  

However, there is a way of approximating the $\sinc$ series above: we seek to replace the $\sinc$ function with a so-called {\em fundamental function} (or {\em cardinal function}), $L$, that preserves the property that $L$ takes value 1 at the origin and 0 at all other integers.  We then form a function 
$$\I f(x) = \sum_{j\in\Z}f(j)L(x-j).$$
The trade-off here is that while $\I f$ is not pointwise equal to the function $f$, it does interpolate $f$ at the integer lattice, and moreover, the fundamental function $L$ may be constructed so that it decays much more rapidly than the $\sinc$ function.  Precisely, one may construct a fundamental function from a given radial basis function, $\phi$, which has the form
$$L_\phi(x) = \zsum{j}a_j\phi(x-j).$$

In the case of the Gaussian kernel, $g_\lambda(x) = e^{-\lambda|x|^2}$, the fundamental function decays exponentially, whereas the fundamental function for the Hardy multiquadric, $\sqrt{|x|^2+c^2}$, decays as $|x|^{-5}$.  So we give up the pointwise equality of the WKS Sampling Theorem in exchange for a series that converges more rapidly, while also ensuring that $\I f$ is close to $f$ in the $L_2$ norm.


This paper primarily considers the fundamental functions and cardinal interpolation operators associated with general multiquadrics, $\phi_{\alpha,c}(x) = (\|x\|^2+c^2)^\alpha$, which have thus far only been considered for certain exponents $\alpha$.  Interpolation with fundamental functions has too long a history to recount here; however, \cite{Buhmannbook} offers a good introduction using radial basis functions.  Using \eqref{EQLchatdef} below as a starting point is especially popular since it allows one to solve problems in the Fourier transform domain.  Many authors have used similar techniques for various radial basis functions.  In \cite{RS2,RiemSiva,RS3}, Riemenschneider and Sivakumar proved several results pertaining to the Gaussian.  Multiquadric cardinal interpolation has been studied in a similar way by Buhmann and Micchelli \cite{buhmannmicchelli}, Baxter \cite{Baxter}, Baxter and Sivakumar \cite{BS}, Riemenschneider and Sivakumar \cite{RiemSiva}, among others.  Compactly supported radial basis functions have been studied by Buhmann \cite{Buhmann_compact_support} and Wendland \cite{Wendland_ACOM}.

The rest of the paper is laid out as follows.  Section \ref{SECBasic} provides the necessary preliminaries and a discussion of applications and calculations of the fundamental functions; Section \ref{SECbaxteranalogue} shows recovery results for cardinal interpolation of bandlimited functions in any dimension via interpolants of the form discussed above.  Section \ref{SECpropfundamental} contains decay rates and information about the univariate fundamental functions associated with the general multiquadrics for a broad range of exponents. In Section \ref{SECEastJournal}, we consider the cardinal interpolation operators acting on data in traditional sequence spaces and calculate decay rates, bounds on the operator norms, and also explore some convergence properties in terms of the parameter $c$.  Section \ref{SECExamples} provides some interesting concrete examples based on the theoretical results from the previous section. Finally, Section \ref{SECproofs} provides the technical proofs of the statements in Section \ref{SECpropfundamental}.

\section{Basic Notions}\label{SECBasic}

If $\Omega\subset\R$ is an interval, then let $L_p(\Omega)$, $1\leq p\leq\infty$, be the usual Lebesgue space over $\Omega$ with its usual norm.  If no set is specified, we mean $L_p(\R)$.  Similarly, denote by $\ell_p(I)$ the usual sequence spaces indexed by the set $I$; if no index set is given, we refer to $\ell_p(\Z)$.  We will use $\N_0$ to denote the natural numbers including 0.  
Let $\schwartz$ be the space of Schwartz functions on $\R^d$, that is the collection of infinitely differentiable functions $\phi$ such that for all multi-indices $\alpha$ and $\beta$,
$$\underset{x\in\R^d}\sup\left|x^\alpha D^\beta\phi(x)\right|<\infty\;.$$

The Fourier transform of a Schwartz function $\phi$ is given by
\begin{equation}\label{ft}
 \widehat{\phi}(\xi):=\int_{\R^d} \phi(x)e^{-i\bracket{\xi, x}}dx,\quad \xi\in\R^d.
\end{equation}
Thus the inversion formula is
\begin{equation}\label{ftinversion}
\phi(x) = \dfrac{1}{(2\pi)^d}\dint_{\R^d}\widehat{\phi}(\xi)e^{i\bracket{x,\xi}}d\xi,\quad x\in\R^d.
\end{equation}
In the event that these formulas do not hold, then the Fourier transform should be interpreted in the sense of tempered distributions. Recall that if $f$ is a tempered distribution, then its Fourier transform is the tempered distribution defined by $\bracket{\widehat{f},\phi}=\bracket{f,\widehat{\phi}}$, $\phi\in\schwartz$.

Let $\alpha\in\R$ and $c>0$ be fixed; then define the $d$-dimensional \textit{general multiquadric} by
\begin{equation}\label{EQgmcdef}
\phi_{\alpha,c}(x):=\left(\|x\|^2+c^2\right)^\alpha,\quad x\in\R^d,
\end{equation}
where $\|\cdot\|$ denotes the Euclidean distance on $\R^d$.

If $\alpha\in\R\setminus\N_0$, the generalized Fourier transform of $\phi_{\alpha,c}$ is given by the following (see, for example, \cite[Theorem 8.15]{Wendland}):
\begin{equation}\label{EQgmcft}
 \phica(\xi)=\dfrac{2^{1+\alpha}}{\Gamma(-\alpha)}\left(\dfrac{c}{\|\xi\|}\right)^{\alpha+\frac{d}{2}}K_{\alpha+\frac{d}{2}}(c\|\xi\|),\quad \xi\in\R^d\setminus\{0\},
\end{equation}
where \cite[p.185]{Watson}
\begin{equation}\label{EQbesseldef}
K_\nu(r) = \dfrac{\Gamma(\frac{1}{2})r^\nu}{2^\nu\Gamma(\nu+\frac{1}{2})}\dint_1^\infty e^{-rx}(x^2-1)^{\nu-\frac{1}{2}}dx,\quad \nu\geq0,r>0.
\end{equation}
$K_\nu$ is called the modified Bessel function of the second kind.  We note that both $\phi_{\alpha,c}$ and its Fourier transform are radial. It is also clear from the definition that $K$ is symmetric in its order; that is, $K_{-\nu}=K_\nu$ for any $\nu\in\R$.  If $\alpha\in\N_0$, then the generalized Fourier transform of $\phi_{\alpha,c}$ involves a measure and so cannot be expressed as a function.

\subsection{Fundamental Functions}
Now suppose that $\alpha\in\R\setminus\N_0$ is fixed.  To define the \textit{fundamental function} associated with the general
multiquadric, we first define the following function
\begin{equation}\label{EQLchatdef}
 \widehat{L_{\alpha,c}}(\xi):=\dfrac{\phica(\xi)}{\zsumd{j}\phica(\xi+2\pi j)},\quad \xi\in\R^d.
\end{equation}
We will see that $\widehat{L_{\alpha,c}}\in L_1(\R^d)$, and so the function
\begin{equation}\label{EQHLcdef}
L_{\alpha,c}(x):=\dfrac{1}{(2\pi)^d}\int_{\R^d}\widehat{L_{\alpha,c}}(\xi)e^{i\bracket{x,\xi}}
d\xi,\quad x\in\R^d,
\end{equation}
is well-defined and continuous.  Furthermore, we will show that $L_{\alpha,c}$ is a \textit{fundamental function}, also called a \textit{cardinal function}, which means that
\begin{equation}\label{EQHfundamental}
L_{\alpha,c}(j) = \delta_{0,j},\quad j\in\Z^d,
\end{equation}
where $\delta_{i,j}$ is the Kronecker delta.

Additionally, $L_{\alpha,c}$ has the form
\begin{equation}\label{EQHLctranslatesofphi}
L_{\alpha,c}(x) = \zsumd{j}c_j\phi_{\alpha,c}(x-j),\quad x\in\R^d.
\end{equation}

Throughout the paper, we will use $A$ to denote an absolute constant due to the use of $c$ as the shape parameter of the multiquadric.  The value of the particular constant may change from line to line, and we use subscripts to denote dependence on certain parameters when needed.

\subsection{Evaluation of Fundamental Functions and Applications}
Interpolation schemes involving fundamental functions as in \eqref{EQHLctranslatesofphi} have been studied for quite some time, and there are many aspects to the theory.  For example, such methods enjoy applications to geoscience \cite{FornFlyerBook} and sampling theory \cite{Ledford}.  Recently, investigations have considered interpolation via radial kernels on manifolds \cite{HNSW1,HNSW2}.  For a Galerkin type method for solving PDEs using meshless interpolation on the sphere, see \cite{NRW}.  

Given the widespread applications of radial basis function approximation, it is of import to the computational community to determine stable ways of evaluating the approximants.  Consequently, there is a substantial literature dealing with accuracy and stability of different computational methodologies for radial basis function approximation.  We do not claim to list all of these methods, but at least a sampling is in order.  We note that approximating \eqref{EQHLctranslatesofphi} is typically very difficult, especially if one dilates the lattice.  One way around this is the use of indirect computational methods to approximate the RBF interpolant \cite{FFL,FL} (for a discussion specifically related to multiquadrics, see \cite{FW}).  Another technique involves a change of basis method \cite{BLM}, while work by Fasshauer and McCourt \cite{FM} uses an eigenfunction decomposition to provide stable reconstruction using Gaussians.

Another quite promising method has recently been considered in which so-called {\em local Lagrange functions} are used to approximate rather than the global ones \cite{FHNWW,HNRW}.  Many of these results revolve around the situation of interpolation at finitely many data sites, which is of a somewhat different nature than we are considering here.  For the interested reader, we also mention that these methods of cardinal interpolation have, at their core, a deep connection to the classical spline theory instigated by Schoenberg (see \cite{Schoenberg} and references therein) and continued by many followers.  The underlying principle is that many of the results in spline interpolation theory have natural analogues via RBFs, and the problem at hand may determine which method is more useful.

\subsection{Examples}
Here, we provide some brief illustrations of the fundamental functions we have mentioned above.  Figure \ref{FIGSinc1D} shows the univariate fundamental function associated with the inverse multiquadric and the sinc function for comparison.

\begin{figure}[h!]
$$\hspace{-2em}\includegraphics[scale = 0.30]{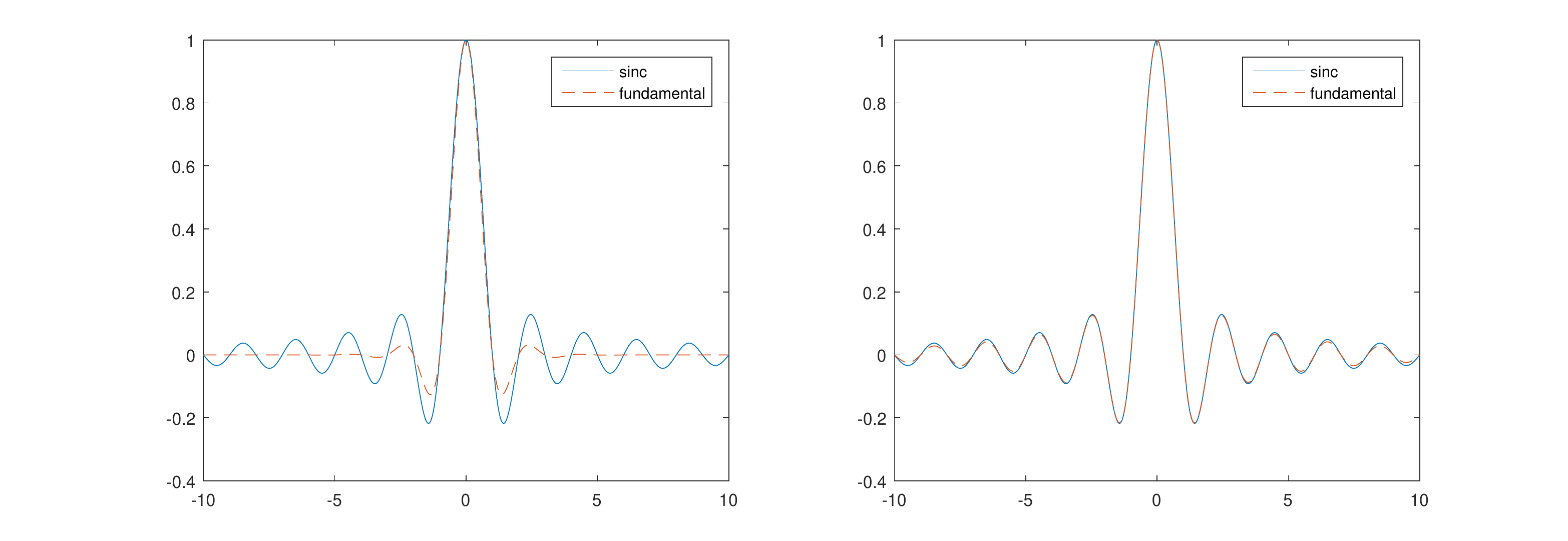}$$
\caption{Plots of sinc function and Fundamental function for the inverse multiquadric with $\alpha=-1/2$ with shape parameters $c=1$ (left) and $c=10$ (right).}\label{FIGSinc1D}
\end{figure}

\begin{figure}[h!]
$$\includegraphics[scale = 0.34]{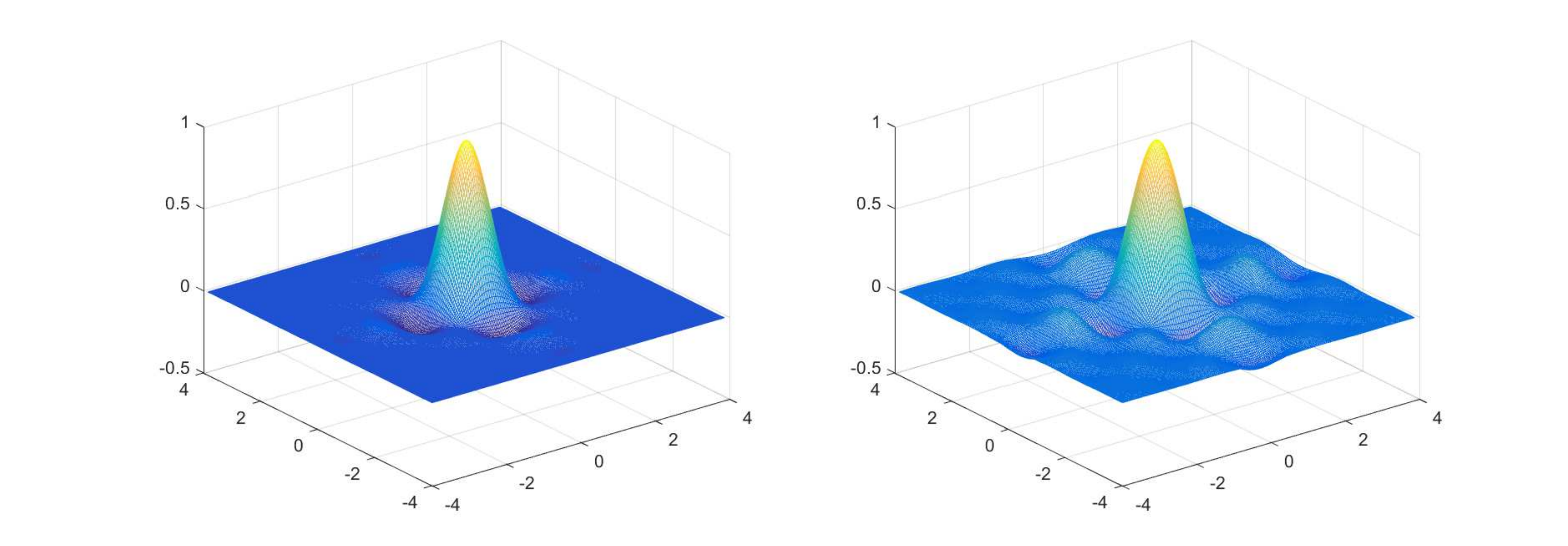}$$
\caption{Plots of 2-dimensional Poisson fundamental function $L_{-\frac{3}{2},1}$ (left) and sinc function (right) for comparison.}
\label{FIGsinc}
\end{figure}

The fundamental function was calculated by truncating the series in \eqref{EQLchatdef} and using a fast Fourier transform (FFT) method to approximate $L_{\alpha,c}$.  The same method may be applied for the multivariate version, as Figure \ref{FIGsinc} shows.
\section{Recovery of Multivariate Bandlimited Functions}\label{SECbaxteranalogue}
When an interpolation scheme depends on a parameter, questions of convergence naturally arise.  This question has been addressed by several authors.  In \cite{Baxter}, Baxter examines the Hardy multiquadric, while the Gaussian is studied in \cite{RS3} by Riemenschneider and Sivakumar and in \cite{HMNW} by Hangelbroek, Narcowich, Madych, and Ward.  In a more general context,  `increasingly flat' radial basis functions are the focus of Driscoll and Fornberg in \cite{DF}, while the second author worked with `regular families' of cardinal interpolants in \cite{Ledford}.

In this section, we show that the result obtained by Baxter \cite{Baxter} holds not only for the traditional Hardy multiquadric (corresponding to $\alpha=1/2$) but rather for all $\alpha\in\R\setminus\N_0$.  We consider interpolation of bandlimited (or Paley-Wiener) functions in any dimension, and show that the cardinal interpolant converges to the function as the shape parameter $c$ tends to infinity. General multiquadrics were not considered for quite some time in this setting, but in \cite{Ledford} convergence results for cardinal interpolation of bandlimited functions were shown for a restricted range of exponents.  However, the analysis there was of a more general nature, so here we show that a more specific analysis yields convergence results for the full range $\alpha\in\R\setminus\N_0.$

We note that the results of this section have very close ties to classical sampling theory, which studies the reconstruction of certain classes of signals from their discrete samples.  As mentioned above, these considerations lead to alternative methods for approximating the sampling series given by the WKS Sampling Theorem for bandlimited signals.

Let $d$ be the dimension, and $\alpha\in\R\setminus\N_0$ and $c>0$ be fixed. It is evident from \eqref{EQgmcft} that $\phica$ does not change sign.  Therefore, $\Lachat(\xi)\geq0$ for all $\xi\in\R^d$.  From \eqref{EQLchatdef}, it is also evident that $0\leq\Lachat(\xi)\leq1$.  To show that $\Lachat\in L_1(\R^d)$ we begin with the following lemma.

\begin{lemma}\label{LEMbaxterphihatdecay}
 Let $R>r>0$, $c>0$, and $\alpha\in \R\setminus\N_0$. Then
 $$|\phica(R)|\leq e^{-c(R-r)}|\phica(r)|.$$
\end{lemma}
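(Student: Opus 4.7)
My plan is to combine the explicit Fourier transform formula \eqref{EQgmcft} with the integral representation \eqref{EQbesseldef} of the modified Bessel function $K_\nu$, and then exploit an elementary pointwise inequality on the exponential kernel to separate the $R$-dependence.

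First I would set $\nu = \alpha + d/2$ and assume $\nu \geq 0$, so that \eqref{EQbesseldef} applies directly. Substituting the representation of $K_\nu$ into \eqref{EQgmcft}, the $(c/\rho)^\nu$ prefactor in \eqref{EQgmcft} cancels with the $(c\rho)^\nu$ appearing in \eqref{EQbesseldef}, leaving an expression of the form
\[
|\phica(\rho)| \;=\; B\int_1^\infty e^{-c\rho x}(x^2-1)^{\nu-\frac12}\,dx,
\]
for some positive constant $B = B(\alpha,c,d)$ independent of $\rho$.

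The heart of the argument is then the pointwise inequality
\[
e^{-cRx}\;=\;e^{-crx}\,e^{-c(R-r)x}\;\leq\;e^{-c(R-r)}\,e^{-crx}\qquad (x\geq 1),
\]
which holds because $c(R-r)>0$ and $x\geq 1$ together give $e^{-c(R-r)x}\leq e^{-c(R-r)}$. Multiplying through by the nonnegative weight $(x^2-1)^{\nu-\frac12}$ and integrating pulls the constant $e^{-c(R-r)}$ out in front and reproduces $|\phica(r)|$ on the right-hand side, yielding the claim when $\nu\geq 0$.

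For $\nu<0$ I would use the symmetry $K_\nu = K_{-\nu}$ to invoke the integral representation with $|\nu|$ in place of $\nu$; the exponential-in-$x$ inequality is unchanged and the same argument goes through. The main obstacle is the bookkeeping of the prefactor in \eqref{EQgmcft} across the two sign regimes of $\alpha + d/2$, since the cancellation between $(c/\rho)^{\alpha+d/2}$ and the $(c\rho)^{|\nu|}$ arising from \eqref{EQbesseldef} looks different in each case, and one must also track the sign of $\Gamma(-\alpha)$. Positivity of both $K_\nu$ and the weight $(x^2-1)^{|\nu|-\frac12}$ on $[1,\infty)$ makes pulling absolute values through the integral painless once the cancellation is sorted out.
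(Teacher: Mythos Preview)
Your approach is essentially identical to the paper's: combine \eqref{EQgmcft} with \eqref{EQbesseldef} so that the $\rho$-dependence in the prefactors cancels, leaving $|\phica(\rho)| = |\lambda|\int_1^\infty e^{-c\rho x}(x^2-1)^{\nu-1/2}\,dx$ for a constant $\lambda$ independent of $\rho$, and then bound $e^{-cRx}\leq e^{-c(R-r)}e^{-crx}$ on $x\geq 1$. The paper simply writes down this identity and the same exponential splitting without singling out the case $\nu = \alpha + d/2 < 0$; your extra caution there is reasonable, but note that the paper does not carry out that bookkeeping either, and indeed when you pass to $K_{|\nu|}$ the prefactor $(c/\rho)^{\nu}(c\rho)^{|\nu|}=\rho^{2|\nu|}$ no longer cancels, so the ``same argument goes through'' claim would need more than just sign-tracking to justify.
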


\begin{proof}
 Defining $\lambda:=\lambda_{c,\alpha,d}:=\frac{2^{1-\frac d2}\Gamma(\frac12)}{\Gamma(-\alpha)\Gamma(\alpha+\frac{d+1}{2})}c^{2\alpha+d}$, equations \eqref{EQgmcft} and \eqref{EQbesseldef} yield the following series of estimates:
 \begin{displaymath}
  \begin{array}{lll}
   |\phica(R)| & = & |\lambda|\dint_1^\infty e^{-cRt}(t^2-1)^{\alpha+\frac{d-1}{2}}dt\\
   \\
   & = & |\lambda|\dint_1^\infty e^{-c(R-r)t}e^{-crt}(t^2-1)^{\alpha+\frac{d-1}{2}}dt\\
   \\
   & \leq & |\lambda| e^{-c(R-r)}\dint_1^\infty e^{-crt}(t^2-1)^{\alpha+\frac{d-1}{2}}dt\\
   \\
   & = & e^{-c(R-r)}|\phica(r)|.
  \end{array}
 \end{displaymath}

\end{proof}

We note that if $\alpha+\frac{d}{2}\geq0$, then $(R/r)^{-\alpha-\frac{d}{2}}\leq1$, and so we have a purely exponential upper bound.

\begin{proposition}\label{PROPLchatintegrable}
 Let $\alpha\in\R\setminus\N_0$ and $c>0$. Then $\Lachat\in L_1(\R^d)$.
\end{proposition}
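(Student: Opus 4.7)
The plan is to split $\R^d$ into a ball of radius $\pi\sqrt d$ centered at the origin and its complement, and to handle each region separately. Inside the ball, the already-noted pointwise bound $0\le \Lachat(\xi)\le 1$ makes the integral trivially finite. Outside the ball is where the work happens, and the key idea is to use a single, well-chosen term from the denominator in \eqref{EQLchatdef} and then invoke Lemma \ref{LEMbaxterphihatdecay} to get exponential decay.

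More concretely, for any $\xi\in\R^d$ there is a unique $j=j(\xi)\in\Z^d$ such that $\xi-2\pi j\in[-\pi,\pi]^d$, so that $\|\xi-2\pi j\|\le \pi\sqrt d$. Because $\phica$ is radial and strictly positive away from the origin, discarding all but the $j$-th term in the denominator gives
\[
0\le \Lachat(\xi) \;\le\; \frac{\phica(\xi)}{\phica(\xi-2\pi j)}.
\]
Now suppose $\|\xi\|>\pi\sqrt d$. Then $R:=\|\xi\|>\|\xi-2\pi j\|=:r$, and since $\phica$ is radial, Lemma \ref{LEMbaxterphihatdecay} applied to the radii $R$ and $r$ yields
\[
\frac{\phica(\xi)}{\phica(\xi-2\pi j)} \;\le\; e^{-c(\|\xi\|-\|\xi-2\pi j\|)} \;\le\; e^{-c(\|\xi\|-\pi\sqrt d)}.
\]

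Combining these, I would estimate
\[
\int_{\R^d}\Lachat(\xi)\,d\xi \;=\; \int_{\|\xi\|\le \pi\sqrt d}\Lachat(\xi)\,d\xi + \int_{\|\xi\|>\pi\sqrt d}\Lachat(\xi)\,d\xi \;\le\; \bigl|B(0,\pi\sqrt d)\bigr| + e^{c\pi\sqrt d}\int_{\|\xi\|>\pi\sqrt d}e^{-c\|\xi\|}\,d\xi,
\]
and the right-hand integral is finite (it reduces via polar coordinates to $\int_{\pi\sqrt d}^\infty \rho^{d-1}e^{-c\rho}d\rho<\infty$).

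I do not foresee a real obstacle: the only subtle point is justifying that one is allowed to drop all but one term from the denominator, which is immediate once one observes that every $\phica(\xi+2\pi k)$ is nonnegative (as remarked in the text just before the lemma). The exponent $\alpha$ plays no role beyond ensuring that $\phica$ does not change sign and that Lemma \ref{LEMbaxterphihatdecay} applies, so the argument works uniformly across the full range $\alpha\in\R\setminus\N_0$.
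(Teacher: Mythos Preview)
Your argument is correct and, in fact, cleaner than the paper's own proof. Both proofs split $\R^d$ into a bounded region (where the trivial bound $0\le\Lachat\le1$ suffices) and its complement, but the treatments of the tail differ. The paper picks, for each $\xi$ outside a large cube, a lattice shift $k_\xi$ with $\|\xi+2\pi k_\xi\|\in[2\pi,4\pi]$, and then invokes two separate Bessel-function estimates from \cite{Wendland}: a lower bound on $K_{\alpha+d/2}$ to control the denominator uniformly from below, and an upper bound on $K_{\alpha+d/2}$ to control $|\phica(\xi)|$ from above. You instead pick the lattice shift sending $\xi$ into the fundamental cube $[-\pi,\pi]^d$ and appeal directly to Lemma~\ref{LEMbaxterphihatdecay} to bound the \emph{ratio} $\phica(\xi)/\phica(\xi-2\pi j)$ by a single exponential. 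This sidesteps the external Bessel asymptotics entirely and yields the sharper pointwise bound $\Lachat(\xi)\le e^{-c(\|\xi\|-\pi\sqrt d)}$ with essentially no work. The paper's approach does give somewhat more explicit constants depending on $c$, $\alpha$, and $d$, but none of that is needed for the bare integrability statement. One trivial caveat: Lemma~\ref{LEMbaxterphihatdecay} requires $r>0$, so strictly speaking your bound is not stated at the lattice points $\xi\in2\pi\Z^d$ with $\|\xi\|>\pi\sqrt d$; but this is a set of measure zero and irrelevant for the $L_1$ conclusion.
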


\begin{proof}
 First, choose an $M>0$ large.  Then since $|\Lachat(\xi)|\leq1$ for all $\xi$, we have that 
 $$\int_{[-M,M]^d}|\Lachat(\xi)|d\xi\leq (2M)^d.$$
 Now we need to estimate $$I:=\int_{\R^d\setminus[-M,M]^d}|\Lachat(\xi)|d\xi.$$  
 
 To do this, we establish a pointwise estimate for $\Lachat(\xi)$.  Let $\xi\in\R^d\setminus[-M,M]^d$ be fixed.  Since $M$ is large, there exists some $k_\xi\in\Z^d\setminus\{0\}$ such that $2\pi\leq\|\xi+2\pi k_\xi\|\leq 4\pi$.  Additionally, there is some constant $\gamma:=\gamma_{\alpha,d}>0$ which depends on $\alpha$ and $d$ such that if $cr\geq1$, $K_{\alpha+\frac{d}{2}}(cr)\geq\gamma e^{-cr}(cr)^{-\frac{1}{2}}$ (see, for example, \cite[Corollary 5.12]{Wendland}).  Therefore, choose $M$ large enough so that for $\xi\in\R^d\setminus[-M,M]^d$, we have $c\|\xi\|\geq1$.  Then if $\lambda$ is the constant from Lemma \ref{LEMbaxterphihatdecay},
\begin{align*}\left|\zsumd{k}\phica(\xi+2\pi k)\right|&\geq|\phica(\xi+2\pi k_\xi)| \\ & \geq \gamma|\lambda|\|\xi+2\pi k_\xi\|^{-\alpha-\frac{d}{2}}e^{-c\|\xi+2\pi k_\xi\|}(c\|\xi+2\pi k_\xi\|)^{-\frac{1}{2}}.
\end{align*}
 Now depending on the sign of $\alpha+\frac{d}{2}$, the above expression is minimized by plugging in $2\pi$ or $4\pi$ for $\|\xi+2\pi k_\xi\|$ in the appropriate places.  Consequently, there is a positive constant $D:=D_{c,\alpha,d}$ such that 
 $$\left|\zsumd{k}\phica(\xi+2\pi k)\right|\geq D e^{-4\pi c}.$$
 We also find from \cite[Lemma 5.13]{Wendland} that for every $r>0$, $K_{\nu}(r)\leq\sqrt{2\pi}\,r^{-\frac{1}{2}}e^{-r}e^\frac{|\nu|^2}{2r}.$  Consequently, by adjusting $M$ if need be so that $e^\frac{|\alpha+\frac{d}{2}|^2}{2c\|\xi\|}\leq2$ for $\xi\in\R^d\setminus[-M,M]^d$, we find that there is a positive constant $\beta$ such that  $K_{\alpha+\frac{d}{2}}(c\|\xi\|)\leq\beta e^{-c\|\xi\|}$. We conclude that
 \begin{align*}
 I &\leq D^{-1}e^{4\pi c}\dint_{\R^d\setminus[-M,M]^d}|\phica(\xi)|d\xi\\ &\leq\beta D^{-1}|\lambda|e^{4\pi c}\dint_{\R^d\setminus[-M,M]^d}\|\xi\|^{-\alpha-\frac{d}{2}}e^{-c\|\xi\|}d\xi.\end{align*}
 The integral on the right hand side above is convergent, and the constants outside are all finite, so $\Lachat\in L_1(\R^d)$.
\end{proof}

Now we turn our attention to the function $L_{\alpha,c}$.

\begin{proposition}\label{PROPLfundamental}
 Let $\alpha\in\R\setminus\N_0$ and $c>0$.  Then the function
 $$L_{\alpha,c}(x)=\dfrac{1}{(2\pi)^d}\int_{\R^d}\Lachat(\xi)e^{i\bracket{x,\xi}}d\xi$$
 is continuous, square-integrable, and satisfies the interpolatory condition $L_{\alpha,c}(k)=\delta_{0,k}$, for every $k\in\Z^d$.
\end{proposition}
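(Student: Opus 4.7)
The plan is to verify the three claimed properties in order, leveraging Proposition \ref{PROPLchatintegrable} for the integrability of $\Lachat$ and the explicit definition \eqref{EQLchatdef} for the interpolation property.

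First, continuity is immediate. Since $\Lachat\in L_1(\R^d)$ by Proposition \ref{PROPLchatintegrable}, the inverse Fourier integral defining $L_{\alpha,c}(x)$ converges absolutely and uniformly in $x$, so an application of the dominated convergence theorem to incremental translations of $x$ gives continuity of $L_{\alpha,c}$ on $\R^d$. Square-integrability also follows quickly: by \eqref{EQLchatdef} and the positivity of $\phica$, we have $0\leq\Lachat(\xi)\leq 1$ pointwise, so
\begin{equation*}
\int_{\R^d}|\Lachat(\xi)|^2\,d\xi\leq\int_{\R^d}|\Lachat(\xi)|\,d\xi<\infty,
\end{equation*}
and hence $\Lachat\in L_2(\R^d)$. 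By Plancherel's theorem, $L_{\alpha,c}\in L_2(\R^d)$.

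The main content is the interpolatory identity. For fixed $k\in\Z^d$, I would write
\begin{equation*}
L_{\alpha,c}(k)=\dfrac{1}{(2\pi)^d}\int_{\R^d}\Lachat(\xi)e^{i\bracket{k,\xi}}d\xi=\dfrac{1}{(2\pi)^d}\zsumd{m}\int_{[-\pi,\pi]^d}\Lachat(\xi+2\pi m)e^{i\bracket{k,\xi+2\pi m}}d\xi,
\end{equation*}
where the partition of $\R^d$ into translates of the fundamental cube and the interchange of sum and integral are justified by Tonelli's theorem applied to the nonnegative function $|\Lachat|$ together with its $L_1$-integrability. Since $\bracket{k,2\pi m}\in 2\pi\Z$, the exponential factor $e^{i\bracket{k,2\pi m}}$ equals $1$, and pulling $e^{i\bracket{k,\xi}}$ out of the sum leaves the periodization
\begin{equation*}
P(\xi):=\zsumd{m}\Lachat(\xi+2\pi m)=\zsumd{m}\dfrac{\phica(\xi+2\pi m)}{\zsumd{j}\phica(\xi+2\pi m+2\pi j)}.
\end{equation*}
For each $m$, a reindexing $j\mapsto j-m$ in the inner sum shows the denominator is independent of $m$ and coincides with $\sum_{j\in\Z^d}\phica(\xi+2\pi j)$. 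Hence the outer sum collapses and $P(\xi)\equiv 1$ almost everywhere. Substituting this back gives
\begin{equation*}
L_{\alpha,c}(k)=\dfrac{1}{(2\pi)^d}\int_{[-\pi,\pi]^d}e^{i\bracket{k,\xi}}d\xi=\delta_{0,k},
\end{equation*}
as desired.

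The only delicate point is the interchange of summation and integration in forming the periodization $P$, and the a.e.\ convergence of the defining series for $P$. Both are handled uniformly by the fact that $\Lachat\geq 0$ is in $L_1(\R^d)$, so Tonelli's theorem applies without any cancellation concerns; the denominator reindexing argument is then purely algebraic. Thus the main obstacle has already been cleared in Proposition \ref{PROPLchatintegrable}, and the remainder of the proof is a clean computation.
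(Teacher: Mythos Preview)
Your proof is correct and follows essentially the same route as the paper: continuity and square-integrability from $\Lachat\in L_1$ (the paper is terser here, while you spell out the $\Lachat\le 1$ and Plancherel argument), and the interpolatory condition via the same periodization over translates of $[-\pi,\pi]^d$ with the denominator reindexing to show the periodization equals $1$. The only cosmetic difference is that you invoke Tonelli for the sum--integral interchange while the paper cites dominated convergence; since $\Lachat\ge 0$, both are valid.
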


\begin{proof}
 Proposition \ref{PROPLchatintegrable} implies that $L_{\alpha,c}$ is continuous and square-integrable, and indeed that $\Lachat$ is its Fourier transform.  To see the interpolatory condition, first define $Q_d:=[-\pi,\pi]^d$.  Then we have via the substitution $u=\xi+2\pi\ell$ that
 \begin{displaymath}
  \begin{array}{lll}
   L_{\alpha,c}(k) & = & \dfrac{1}{(2\pi)^d}\dint_{\R^d}\dfrac{\phica(\xi)}{\zsumd{j}\phica(\xi+2\pi j)}e^{i\bracket{k,\xi}}d\xi\\
   \\
   & = & \dfrac{1}{(2\pi)^d}\zsumd{\ell}\dint_{Q_d+2\pi\ell}\dfrac{\phica(\xi)}{\zsumn{j}\phica(\xi+2\pi j)}e^{i\bracket{k,\xi}}d\xi\\
   \\
   & =  & \dfrac{1}{(2\pi)^d}\dint_{Q_d}\zsumd{\ell}\dfrac{\phica(u-2\pi\ell)e^{-i\bracket{k,2\pi\ell}}}{\zsumd{j}\phica(u-2\pi\ell+2\pi j)}e^{i\bracket{k,u}}du\\
   \\
   & = & \dfrac{1}{(2\pi)^d}\dint_{Q_d} e^{i\bracket{k,u}}du\\
   \\
   & = & \delta_{0,k}.\\
  \end{array}
 \end{displaymath}
 The interchange of sum and integral in the third line is justified by the Dominated Convergence Theorem, for example.
\end{proof}
 
It is an important observation that much of the cardinal interpolation theory for bandlimited functions revolves around the fact that the fundamental functions converge to the function $\sin(\pi x)/(\pi x)$, which is equivalent to the statement that the Fourier transform of the fundamental function converges almost everywhere to the indicator function of the cube $[-\pi,\pi]^d$.  The story is no different here.  Defining $I(\xi)$ to be the function that takes value 1 whenever $\xi\in[-\pi,\pi]^d$, and 0 elsewhere, the following holds.
 
 \begin{proposition}\label{PROPconvergencetosinc}
 Let $\alpha\in\R\setminus\N_0$.  Then
 $$\inflim{c}\Lachat(\xi)=I(\xi)$$
 for all $\xi\in\R^d$ such that $\max\{|\xi_1|,\dots,|\xi_d|\}\neq\pi$.
\end{proposition}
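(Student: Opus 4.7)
The plan is to split the proof into two cases based on where $\xi$ lies relative to the cube $[-\pi,\pi]^d$, and in each case exploit Lemma \ref{LEMbaxterphihatdecay} together with the radiality of $\widehat{\phi_{\alpha,c}}$. Throughout, note that if $\widehat{\phi_{\alpha,c}}$ is radial and $\|\zeta_1\| > \|\zeta_2\| > 0$, then Lemma \ref{LEMbaxterphihatdecay} gives
$$\widehat{\phi_{\alpha,c}}(\zeta_1) \leq e^{-c(\|\zeta_1\|-\|\zeta_2\|)}\widehat{\phi_{\alpha,c}}(\zeta_2),$$
so everything reduces to comparing norms of lattice shifts.

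\textbf{Case 1: $\xi\in(-\pi,\pi)^d$.} First I would show that for every $j\in\Z^d\setminus\{0\}$, $\|\xi+2\pi j\|>\|\xi\|$. The key observation is that if $j_m\neq 0$, then $|\xi_m+2\pi j_m|\geq 2\pi-|\xi_m|>|\xi_m|$, while for $m$ with $j_m=0$ the coordinates agree; summing squares gives strict inequality. Dividing the numerator and denominator of $\widehat{L_{\alpha,c}}(\xi)$ by $\widehat{\phi_{\alpha,c}}(\xi)$, it suffices to show
$$\sum_{j\neq 0}\frac{\widehat{\phi_{\alpha,c}}(\xi+2\pi j)}{\widehat{\phi_{\alpha,c}}(\xi)}\leq \sum_{j\neq 0} e^{-c(\|\xi+2\pi j\|-\|\xi\|)} \longrightarrow 0$$
as $c\to\infty$. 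I would split the $j$-sum according to whether $\|j\|$ exceeds $\|\xi\|/\pi + 1$: the finitely many small-$j$ terms each tend to zero individually since $\|\xi+2\pi j\|>\|\xi\|$, and for the tail I would use $\|\xi+2\pi j\|-\|\xi\|\geq 2\pi\|j\|-2\|\xi\|$ to dominate the remainder by a convergent geometric-type sum that vanishes as $c\to\infty$.

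\textbf{Case 2: $\xi\notin[-\pi,\pi]^d$.} Here I would locate $\xi$'s unique representative in the fundamental domain. For each coordinate, pick $j_m^*\in\Z$ with $\xi_m-2\pi j_m^*\in(-\pi,\pi]$; since $\max_m |\xi_m|>\pi$ (and no coordinate equals $\pi$ exactly), at least one $j_m^*\neq 0$, and $|\xi_m-2\pi j_m^*|\leq|\xi_m|$ with strict inequality whenever $|\xi_m|>\pi$. Setting $\eta:=\xi-2\pi j^*\in(-\pi,\pi]^d$, it follows that $\|\eta\|<\|\xi\|$. Since $\widehat{\phi_{\alpha,c}}(\eta)$ is one term in the denominator of $\widehat{L_{\alpha,c}}(\xi)$ and all other terms are nonnegative,
$$0\leq \widehat{L_{\alpha,c}}(\xi)\leq \frac{\widehat{\phi_{\alpha,c}}(\xi)}{\widehat{\phi_{\alpha,c}}(\eta)}\leq e^{-c(\|\xi\|-\|\eta\|)}\longrightarrow 0.$$

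The main technical obstacle is the tail estimate in Case 1: one must control uniformly the sum $\sum_{j\neq 0}e^{-c(\|\xi+2\pi j\|-\|\xi\|)}$ as $c\to\infty$. This is not a Weierstrass-$M$ issue since the bound depends on $c$; instead I would argue as above by separating a finite set of near-origin indices (handled termwise) from a tail where the exponent grows linearly in $\|j\|$, ensuring summability once $c$ is, say, at least $1$, and applying dominated convergence (or a direct $\epsilon$-argument) to conclude the sum vanishes. The remaining pieces (Case 2 and the interior limit once the sum is controlled) are then immediate.
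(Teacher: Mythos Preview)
Your proposal is correct and follows essentially the same two-case argument as the paper, using Lemma~\ref{LEMbaxterphihatdecay} in each case exactly as the authors do. The only noticeable difference is in the tail estimate for the interior case: the paper simply observes that for $c\geq 1$ the series $\sum_{j\neq 0}e^{-c(\|\xi+2\pi j\|-\|\xi\|)}$ is termwise dominated by the fixed convergent series with $c=1$, so dominated convergence applies directly---there is no need for your finite/tail split. (Your parenthetical ``and no coordinate equals $\pi$ exactly'' in Case~2 is not quite right, since individual coordinates may equal $\pi$ when the maximum exceeds $\pi$; but your argument does not actually use that, and the strict inequality $\|\eta\|<\|\xi\|$ follows already from the existence of one coordinate with $|\xi_m|>\pi$.)
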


\begin{proof}
 First suppose that $\xi\notin[-\pi,\pi]^d$.  Then there exists some $k_0\in\Z^d$ such that $\|\xi+2\pi k_0\|<\|\xi\|$.  Therefore by Lemma \ref{LEMbaxterphihatdecay}, 
 \begin{displaymath}
  \begin{array}{lll}
|\phica(\xi)| & \leq & e^{-c(\|\xi\|-\|\xi+2\pi k_0\|)}|\phica(\xi+2\pi k_0)|\\ \\
 & \leq & e^{-c(\|\xi\|-\|\xi+2\pi k_0\|)}\zsumd{k}|\phica(\xi+2\pi k)|.\\   
  \end{array}
 \end{displaymath}
 Consequently, since $\phica$ is of one sign,
 $$0\leq\Lachat(\xi)\leq e^{-c(\|\xi|-\|\xi+2\pi k_0\|)}.$$
 Since the exponent is negative, the limit of the right hand side as $c\to\infty$ is 0.  Therefore, for $\xi\notin[-\pi,\pi]^d$, $\inflim{c}\Lachat(\xi)=0$.
 
 Now suppose that $\xi\in(-\pi,\pi)^d$.  Then for all $k\in\Z^d\setminus\{0\}$, $\|\xi\|<\|\xi+2\pi k\|$.  Due to \eqref{EQLchatdef}, we may write
  $$\Lachat(\xi)=\left(1+\zsumnzero{k}\dfrac{\phica(\xi+2\pi k)}{\phica(\xi)}\right)^{-1},$$
  and therefore it suffices to show that 
$$\inflim{c}\;\zsumzero{k}\dfrac{\phica(\xi+2\pi k)}{\phica(\xi)}=0.$$
 By Lemma \ref{LEMbaxterphihatdecay}, 
$$
0\leq\zsumzero{k}\dfrac{\phica(\xi+2\pi k)}{\phica(\xi)} \; \leq \; \zsumzero{k}e^{-c(\|\xi+2\pi k\|-\|\xi\|)}.$$
The series on the right hand side is convergent and dominated by the convergent series where $c$ is replaced by 1, so $$\inflim{c}\zsumzero{k}\dfrac{\phica(\xi+2\pi k)}{\phica(\xi)}=0$$ as desired.  Convergence of the series stems from the fact that the series is majorized by $\zsumzero{k}e^{-2c\pi\|k\|}$, which converges.  
\end{proof}

We now consider interpolation of bandlimited functions at the lattice $\Z^d$ by translates of the function $L_{\alpha,c}(x)$.  Define the $d$-dimensional Paley-Wiener space by
$$PW_\pi^{(d)}:=\{f\in L_2(\R^d): \widehat{f}=0 \textnormal{ a.e. outside } [-\pi,\pi]^d\}.$$
We begin our analysis with an $L_2$ version of the Poisson Summation Formula:

\begin{lemma}[cf. \cite{Baxter} Lemma 3.2]\label{LEMPSF}
 If $f\in PW_\pi^{(d)}$, then
 \begin{equation}\label{EQPSF}
  \zsumd{j}\widehat{f}(\xi+2\pi j)=\zsumd{j}f(j)e^{-i\bracket{j,\xi}},
 \end{equation}
where the second series is convergent in $L_2(\R^d)$.
\end{lemma}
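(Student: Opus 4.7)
The plan is to reduce the statement to the classical $L_2$ Fourier series expansion of the restriction $\widehat{f}|_{Q_d}$, where $Q_d:=[-\pi,\pi]^d$. The right-hand side of \eqref{EQPSF} is manifestly $2\pi$-periodic in every coordinate, and the left-hand side is the periodization of the compactly supported function $\widehat{f}$; so the natural interpretation of ``convergence in $L_2(\R^d)$'' is convergence in $L_2$ on any fundamental domain (equivalently, in $L_2(Q_d)$), and I would note this at the outset.

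First I would record the basic ingredients: since $f\in PW_\pi^{(d)}$, Plancherel gives $\widehat{f}\in L_2(\R^d)$, and by definition $\widehat{f}$ vanishes a.e. outside $Q_d$. Hence $\widehat{f}|_{Q_d}\in L_2(Q_d)$ and admits a Fourier series
\[
\widehat{f}(\xi)=\sum_{j\in\Z^d}c_j\,e^{-i\bracket{j,\xi}}\quad\text{in }L_2(Q_d),
\]
with $\sum_j|c_j|^2<\infty$ by Parseval.

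Next I would identify the Fourier coefficients with the samples. By definition
\[
c_j=\dfrac{1}{(2\pi)^d}\dint_{Q_d}\widehat{f}(\xi)e^{i\bracket{j,\xi}}d\xi,
\]
and since $\widehat{f}$ is supported in $Q_d$, the integral may be extended to all of $\R^d$. The Fourier inversion formula \eqref{ftinversion}, valid for $f\in L_2$ with $\widehat{f}\in L_1\cap L_2$ (which holds because $\widehat{f}\in L_2$ is compactly supported, hence in $L_1$), then gives $c_j=f(j)$. This shows that the right-hand side of \eqref{EQPSF} converges in $L_2(Q_d)$ to $\widehat{f}(\xi)$.

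To finish, I would argue that the left-hand side of \eqref{EQPSF} agrees with the $2\pi$-periodic extension of $\widehat{f}|_{Q_d}$. Indeed, for a.e.\ $\xi\in\R^d$ there is a unique $\ell\in\Z^d$ with $\xi+2\pi\ell\in Q_d$, and by the support hypothesis only that single term of the series $\sum_j\widehat{f}(\xi+2\pi j)$ is nonzero; its value is $\widehat{f}(\xi+2\pi\ell)$, i.e.\ the periodic extension of $\widehat{f}|_{Q_d}$ evaluated at $\xi$. Comparing with the previous paragraph yields \eqref{EQPSF} as an identity of $2\pi$-periodic $L_2^{\mathrm{loc}}$ functions, with convergence of the right-hand side in $L_2$ on any fundamental domain. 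There is no serious obstacle here; the only point requiring care is making the convergence mode precise, since a nonzero periodic function is not globally in $L_2(\R^d)$.
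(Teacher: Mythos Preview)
Your argument is correct and is precisely the standard proof of this $L_2$ Poisson summation formula; the paper does not supply its own proof but defers to \cite{Baxter}, Lemma~3.2, whose argument is essentially the one you have written. Your care about the mode of convergence (interpreting equality in $L_2(Q_d)$, equivalently $L_2_{\mathrm{loc}}$ for $2\pi$-periodic functions) is exactly the right clarification.
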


\begin{lemma}\label{LEMcauchysequence}
 Let $f\in PW_\pi^{(d)}$.  For $m\in\N$, define
 $$\scamfhat(\xi):=\left(\underset{\|k\|_1\leq m}{\dsum} f(k)e^{-i\bracket{k,\xi}}\right)\Lachat(\xi),\quad\xi\in\R^d,$$
 where $\|k\|_1 = \finsum{i}{1}{d}|k_i|$ for $k\in\Z^d$.  Then $(\scamf)_{m\in\N}$ forms a Cauchy sequence in $L_2(\R^d)$.
\end{lemma}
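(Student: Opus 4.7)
The plan is to work on the Fourier side via Plancherel's theorem, which reduces the task to showing that $(\scamfhat)_{m\in\N}$ is Cauchy in $L_2(\R^d)$. Writing $S_m(\xi) := \sum_{\|k\|_1 \le m} f(k)\, e^{-i\bracket{k,\xi}}$ for the trigonometric polynomial partial sums, the goal becomes to establish
$$\int_{\R^d}|S_m(\xi) - S_n(\xi)|^2\,|\Lachat(\xi)|^2\,d\xi \longrightarrow 0 \quad (m,n \to \infty).$$

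The key step will be to exploit the $2\pi$-periodicity of $S_m - S_n$ by tiling $\R^d$ with the cubes $[-\pi,\pi]^d + 2\pi\ell$, $\ell\in\Z^d$, and interchanging sum and integral (Tonelli applies since the integrand is nonnegative). This yields
$$\int_{\R^d}|S_m - S_n|^2\,|\Lachat|^2\,d\xi = \int_{[-\pi,\pi]^d}|S_m(\eta) - S_n(\eta)|^2 \left(\sum_{\ell\in\Z^d}|\Lachat(\eta+2\pi\ell)|^2\right)d\eta.$$
The weight in parentheses is at most $1$: pointwise, $0 \le \Lachat \le 1$ gives $|\Lachat|^2 \le \Lachat$, and a brief reindexing of the denominator in \eqref{EQLchatdef} gives the pointwise identity $\sum_{\ell\in\Z^d} \Lachat(\eta+2\pi\ell) \equiv 1$ (the sum $\sum_j \phica(\eta+2\pi j)$ is invariant under $\eta \mapsto \eta + 2\pi\ell$, so summing the numerator of \eqref{EQLchatdef} over $\ell$ reproduces it).

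It then suffices to check that $(S_m)$ is Cauchy in $L_2([-\pi,\pi]^d)$, which is immediate from Lemma \ref{LEMPSF}: the series $\sum_k f(k)e^{-i\bracket{k,\xi}}$ converges in $L_2$ on $[-\pi,\pi]^d$ to $\sum_j \widehat{f}(\xi+2\pi j)$, so its partial sums under the $\|k\|_1\le m$ exhaustion form a Cauchy sequence. Combining the estimates, $(\scamfhat)_m$ is Cauchy in $L_2(\R^d)$, and a final application of Plancherel transfers this back to $(\scamf)_m$. I do not foresee a serious obstacle; the only point worth singling out is the periodization identity $\sum_\ell \Lachat(\eta + 2\pi\ell) = 1$, but this is essentially definitional and reduces everything to standard Fourier analysis on the torus.
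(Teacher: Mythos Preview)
Your argument is correct and mirrors the paper's proof almost exactly: both periodize over the cubes $[-\pi,\pi]^d+2\pi\ell$, use Tonelli and the $2\pi$-periodicity of the partial sums to reduce the $L_2(\R^d)$ estimate to an $L_2([-\pi,\pi]^d)$ estimate, and then appeal to Lemma~\ref{LEMPSF}. The only cosmetic difference is that the paper bounds $\sum_{\ell}\Lachat(\xi+2\pi\ell)^2\le 1$ directly from \eqref{EQLchatdef} as a ratio of sums, whereas you obtain it via $|\Lachat|^2\le\Lachat$ together with the partition-of-unity identity $\sum_{\ell}\Lachat(\xi+2\pi\ell)=1$; these are equivalent one-line observations.
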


\begin{proof}
 Define $Q_m:\R^d\to\R$ via $$Q_m(\xi)=\underset{\|k\|_1\leq m}{\dsum} f(k)e^{-i\bracket{k,\xi}}.$$  Thus, $\scamfhat(\xi)=Q_m(\xi)\Lachat(\xi)$.  From Lemma \ref{LEMPSF}, it is clear that $(Q_m)_{m\in\N}$ is a Cauchy sequence in $L_2[-\pi,\pi]^d$.  So 
 \begin{displaymath}
  \begin{array}{lll}
   \|\scamfhat-\scafhat{\ell}\|_{L_2(\R^d)}^2 & \leq & \dint_{\R^d}|Q_m(\xi)-Q_\ell(\xi)|^2\left(\Lachat(\xi)\right)^2d\xi\\
   \\
   & = & \zsumd{k}\dint_{[-\pi,\pi]^d}|Q_m(\xi+2\pi k)-Q_\ell(\xi+2\pi k)|^2 \\
   & & \hfill\times\left(\Lachat(\xi+2\pi k)\right)^2 d\xi\\
   \\
   & = & \dint_{[-\pi,\pi]^d}|Q_m(\xi)-Q_\ell(\xi)|^2\zsumd{k}\left(\Lachat(\xi+2\pi k)\right)^2 d\xi\\
   \\
   & \leq & \dint_{[-\pi,\pi]^d}|Q_m(\xi)-Q_\ell(\xi)|^2d\xi\;. \\
  \end{array}
 \end{displaymath}
The interchange of sum and integral is valid by Tonelli's Theorem, and the last inequality follows from the fact that
\begin{equation}\label{EQ Hamm Lsquared}
\zsumd{k}\left(\Lachat(\xi+2\pi k)\right)^2=\dfrac{\zsumd{k}\phica^2(\xi+2\pi k)}{\left(\zsumd{l}\phica(\xi+2\pi l)\right)^2}\leq1.
\end{equation}
We also used the fact that for $k\in\Z^d$, $Q_m(\xi+2\pi k)=Q_m(\xi)$.  We conclude that
$(\scamfhat)_{m\in\N}$ is a Cauchy sequence in $L_2(\R^d)$ because
$\|\scamfhat-\scafhat{\ell}\|_{L_2(\R^d)}\leq\|Q_m-Q_\ell\|_{L_2[-\pi,\pi]^d}$, and the latter is Cauchy.
 \end{proof}

 Lemmas \ref{LEMPSF} and \ref{LEMcauchysequence} allow us to define
 \begin{equation}\label{EQinterpolantdef}
  \scahat(\xi):=\Lachat(\xi)\zsumd{k}f(k)e^{-i\bracket{k,\xi}},
 \end{equation}
where the series is convergent in $L_2(\R^d)$.  By a periodization argument similar to that in the proof of Lemma \ref{LEMcauchysequence}, one can show that $\scahat\in L_1(\R^d)$.  Thus applying the Fourier inversion formula we see that
$$\I_{\alpha,c}f(x) = \zsumd{k}f(k)L_{\alpha,c}(x-k),\quad x\in\R^d.$$

\begin{theorem}\label{THMbandlimitedconvergence}
Let $\alpha\in\R\setminus\N_0$.  If $f\in PW_\pi^{(d)}$, then 
 $$\inflim{c}\|\I_{\alpha,c}f-f\|_{L_2(\R^d)}=0,$$
 and $\inflim{c}|\I_{\alpha,c}f(x)-f(x)|=0$ uniformly on $\R^d$.
\end{theorem}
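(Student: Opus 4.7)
The plan is to work on the Fourier transform side throughout, where the structure of the interpolant is transparent, and to handle the two modes of convergence together using essentially the same dominated convergence argument.

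First I would set up a decomposition. Writing $Q(\xi) = \sum_{k\in\Z^d} f(k)e^{-i\langle k,\xi\rangle}$, the Poisson summation identity in Lemma \ref{LEMPSF} combined with $\supp\widehat{f}\subset Q_d:=[-\pi,\pi]^d$ gives $Q(\xi) = \widehat{f}(\xi)$ almost everywhere on $Q_d$. Hence, setting $g_c := \scahat - \widehat{f}$,
\begin{equation*}
g_c(\xi) = \begin{cases} (\Lachat(\xi)-1)\,\widehat{f}(\xi), & \xi\in Q_d,\\[2pt] \Lachat(\xi)\,Q(\xi), & \xi\in\R^d\setminus Q_d.\end{cases}
\end{equation*}
By Plancherel and Fourier inversion we have $\|\I_{\alpha,c}f-f\|_{L_2(\R^d)} = (2\pi)^{-d/2}\|g_c\|_{L_2(\R^d)}$ and the pointwise bound $\|\I_{\alpha,c}f-f\|_{L_\infty(\R^d)}\leq(2\pi)^{-d}\|g_c\|_{L_1(\R^d)}$, so both conclusions reduce to showing that $\|g_c\|_{L_2}$ and $\|g_c\|_{L_1}$ tend to $0$ as $c\to\infty$.

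On $Q_d$, both integrands are bounded pointwise by $|\widehat{f}|^2$ and (after Cauchy--Schwarz) by $|\widehat{f}|$ respectively, each of which is integrable over the bounded set $Q_d$ since $f\in PW_\pi^{(d)}$. Proposition \ref{PROPconvergencetosinc} gives $\Lachat(\xi)\to 1$ almost everywhere on the interior of $Q_d$, so the dominated convergence theorem sends the contribution from $Q_d$ to zero in both cases.

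For the tails, the key tool is periodization. Since $Q$ is $2\pi\Z^d$-periodic and equal to $\widehat{f}$ on $Q_d$, substituting $\xi\mapsto \xi+2\pi k$ and summing yields
\begin{align*}
\int_{\R^d\setminus Q_d}\Lachat(\xi)^2|Q(\xi)|^2\,d\xi &= \int_{Q_d}|\widehat{f}(\xi)|^2\sum_{k\neq 0}\Lachat(\xi+2\pi k)^2\,d\xi,\\
\int_{\R^d\setminus Q_d}\Lachat(\xi)|Q(\xi)|\,d\xi &= \int_{Q_d}|\widehat{f}(\xi)|\sum_{k\neq 0}\Lachat(\xi+2\pi k)\,d\xi.
\end{align*}
The inner sums are controlled by the two identities $\sum_{k}\Lachat(\xi+2\pi k) = 1$ (immediate from \eqref{EQLchatdef}) and $\sum_k\Lachat(\xi+2\pi k)^2\le 1$ from \eqref{EQ Hamm Lsquared}. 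Thus for $\xi\in Q_d$,
\begin{equation*}
\sum_{k\neq 0}\Lachat(\xi+2\pi k) = 1-\Lachat(\xi),\qquad \sum_{k\neq 0}\Lachat(\xi+2\pi k)^2 \le 1-\Lachat(\xi)^2,
\end{equation*}
and both right-hand sides tend to $0$ almost everywhere on the interior of $Q_d$ by Proposition \ref{PROPconvergencetosinc} while remaining bounded by $1$. A final application of dominated convergence (with dominating functions $|\widehat{f}|^2$ and $|\widehat{f}|\in L_1(Q_d)$) finishes both claims.

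The only delicate step is the legitimacy of the rearrangement used in the periodization. The $L_2$ rearrangement is justified exactly as in the proof of Lemma \ref{LEMcauchysequence} via Tonelli, while for the $L_1$ version the absolute convergence of the periodized sum is guaranteed by $\scahat\in L_1(\R^d)$, which has already been recorded in the remark preceding the theorem. Everything else is a bookkeeping application of dominated convergence.
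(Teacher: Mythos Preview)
Your argument is correct and is essentially the same as the paper's proof: both use Fourier inversion/Plancherel, split into the cube $Q_d$ and its complement, periodize the tail back to $Q_d$, and invoke dominated convergence via the identities $\sum_k\Lachat(\cdot+2\pi k)=1$ and $\sum_k\Lachat(\cdot+2\pi k)^2\le 1$ together with Proposition~\ref{PROPconvergencetosinc}. The only organizational difference is that you treat the $L_1$ and $L_2$ estimates in parallel via a single decomposition of $g_c$, whereas the paper handles uniform convergence and $L_2$ convergence in two separate passes; the resulting bounds (e.g.\ $\|g_c\|_{L_1}\le 2\int_{Q_d}|\widehat f|(1-\Lachat)$) coincide. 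One small remark: your final paragraph is more cautious than necessary---the tail rearrangements involve nonnegative integrands, so Tonelli alone justifies them without appealing to $\scahat\in L_1$.
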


\begin{proof}
 We will first prove uniform convergence.  The proof is the same as in \cite{Baxter}.  Again let $I(\xi)$ be the characteristic function of the cube.  Then we see by the inversion formula and the oft-exploited periodization argument, that
 \begin{displaymath}
  \begin{array}{lll}
   \I_{\alpha,c}f(x)-f(x) & = & \dfrac{1}{(2\pi)^d}\dint_{\R^d}\zsumd{k}\widehat{f}(\xi+2\pi k)\left(\Lachat(\xi)-I(\xi)\right)e^{-i\bracket{x,\xi}}d\xi\\
   \\
   & = & \dfrac{1}{(2\pi)^d}\dint_{[-\pi,\pi]^d}\widehat{f}(\xi)\zsumd{k}\left(\Lachat(\xi+2\pi k)-I(\xi+2\pi k)\right)\\ & & \hfill\times e^{-i\bracket{x,\xi+2\pi k}}d\xi.\\
  \end{array}
 \end{displaymath}
 Therefore, we find that
 \begin{displaymath}
  \begin{array}{lll}
   |\I_{\alpha,c}f(x)-f(x)| & \leq & \dfrac{1}{(2\pi)^d}\dint_{[-\pi,\pi]^d}|\widehat{f}(\xi)|\zsumd{k}\left|\Lachat(\xi+2\pi k)-I(\xi+2\pi k)\right|d\xi\\
   \\
   & = & \dfrac{1}{(2\pi)^d}\dint_{[-\pi,\pi]^d}|\widehat{f}(\xi)|\left(1-\Lachat(\xi)+\zsumzero{k}\Lachat(\xi+2\pi k)\right)d\xi.\\
  \end{array}
 \end{displaymath}

 But then by definition,
$$
   \zsumnzero{k}\Lachat(\xi+2\pi k)  =  \dfrac{\zsumd{k}\phica(\xi+2\pi k)-\phica(\xi)}{\zsumd{l}\phica(\xi+2\pi l)}
    =  1-\Lachat(\xi).
  $$
Therefore,
$$|\I_{\alpha,c}f(x)-f(x)|  \leq  2\dfrac{1}{(2\pi)^d}\dint_{[-\pi,\pi]^d}|\widehat{f}(\xi)|(1-\Lachat(\xi))d\xi.$$
As the integrand is non-negative and bounded by $2|\widehat{f}(\xi)|\in L_1[-\pi,\pi]^d$, and $\inflim{c}(1-\Lachat(\xi))=0$, the Dominated Convergence Theorem implies that
$$\inflim{c}|\I_{\alpha,c}f(x)-f(x)| = 0,\quad x\in\R^d.$$
The upper bound is independent of $x$, hence the convergence is uniform.

We now turn to the proof of $L_2$ convergence. By Parseval's Identity, it suffices to show that $\|\scahat-\widehat{f}\|_{L_2(\R^d)}\to0$.  This breaks up into two estimates.  We first show this for the cube $[-\pi,\pi]^d$.  Recall that since $(e^{-i\bracket{k,\cdot}})_{k\in\Z^d}$ is an orthonormal basis for $L_2[-\pi,\pi]^d$, we may write $\widehat{f}(\xi)=\zsumd{k}f(k)e^{-i\bracket{k,\xi}}$. Moreover, 
$$\|\widehat{f}\|_{L_2[-\pi,\pi]^d} = \|f(k)\|_{\ell_2(\Z^d)}.$$
 Thus using \eqref{EQinterpolantdef}, 
\begin{align*}
\|\scahat-\widehat{f}\|_{L_2[-\pi,\pi]^d}^2 & =  \dint_{[-\pi,\pi]^d}\left|\zsumd{k}f(k)(\Lachat(\xi)-1)e^{-i\bracket{k,\xi}}\right|^2d\xi \\ & = \dint_{[-\pi,\pi]^d}|\Lachat(\xi)-1|^2\left|\zsumd{k}f(k)e^{-i\bracket{k,\xi}}\right|^2d\xi.
\end{align*}
  
The right hand side is bounded by $4\|f(k)\|^2_{\ell_2(\Z^d)}$, and so by the Dominated Convergence Theorem and Proposition \ref{PROPconvergencetosinc}, $\inflim{c}\|\scahat-\widehat{f}\|_{L_2[-\pi,\pi]^d}=0$.

Now for the rest of the space, for $l=(l_1,l_2,\dots,l_d)\in\Z^d\setminus\{0\},$ define $Q_l = [-\pi-2\pi l_1,\pi-2\pi l_1]\times\dots\times[-\pi-2\pi l_d,\pi-2\pi l_d]$.  Then we see that since $f$ is bandlimited,
$$\|\scahat-\widehat{f}\|^2_{L_2(\R^d\setminus[-\pi,\pi]^d)} = \|\scahat\|^2_{L_2(\R^d\setminus[-\pi,\pi]^d)} =  \zsumnzero{l}\|\scahat\|_{L_2(Q_l)}^2.$$
Consequently,
\begin{displaymath}
 \begin{array}{lll}
\dint_{\R^d\setminus[-\pi,\pi]^d}|\scahat(\xi)|^2d\xi &  =  & \zsumnzero{l}\dint_{Q_l}\left|\Lachat(\xi)\zsumd{k}f(k)e^{-i\bracket{k,\xi}}\right|^2d\xi\\
\\
& = & \dint_{[-\pi,\pi]^d}\zsumnzero{l}|\Lachat(\xi+2\pi l)|^2\left|\zsumd{k}f(k)e^{-i\bracket{k,\xi}}\right|^2d\xi,\\
\\
 \end{array}
\end{displaymath}
 by the Monotone Convergence Theorem.
 
Recall that $0\leq\Lachat(\xi)\leq1$, so $|\Lachat(\xi+2\pi\ell)|^2\leq\Lachat(\xi+2\pi\ell)$, and as calculated above, $\zsumnzero{\ell}\Lachat(\xi+2\pi\ell)=1-\Lachat(\xi)$.  Consequently, the integrand is bounded by
$$|1-\Lachat(\xi)|\left|\zsumd{k}f(k)e^{-i\bracket{k,\xi}}\right|^2\leq 2\|f(k)\|_{\ell_2(\Z^d)}^2.$$
Therefore, the Dominated Convergence Theorem and Proposition \ref{PROPconvergencetosinc} imply that $\inflim{c}\|\scahat\|_{L_2(\R^d\setminus[-\pi,\pi]^d)}=0$, and the proof is complete.
\end{proof}

To illustrate the convergence given by Theorem \ref{THMbandlimitedconvergence} above, the following figure shows the inverse multiquadric interpolant of the function whose Fourier transform is $\widehat{g}(\xi)=\xi^2$ in dimension 1.  

\begin{figure}[h!]\label{FIGinterp}
 $$\hspace{-1em}\includegraphics[scale=0.24]{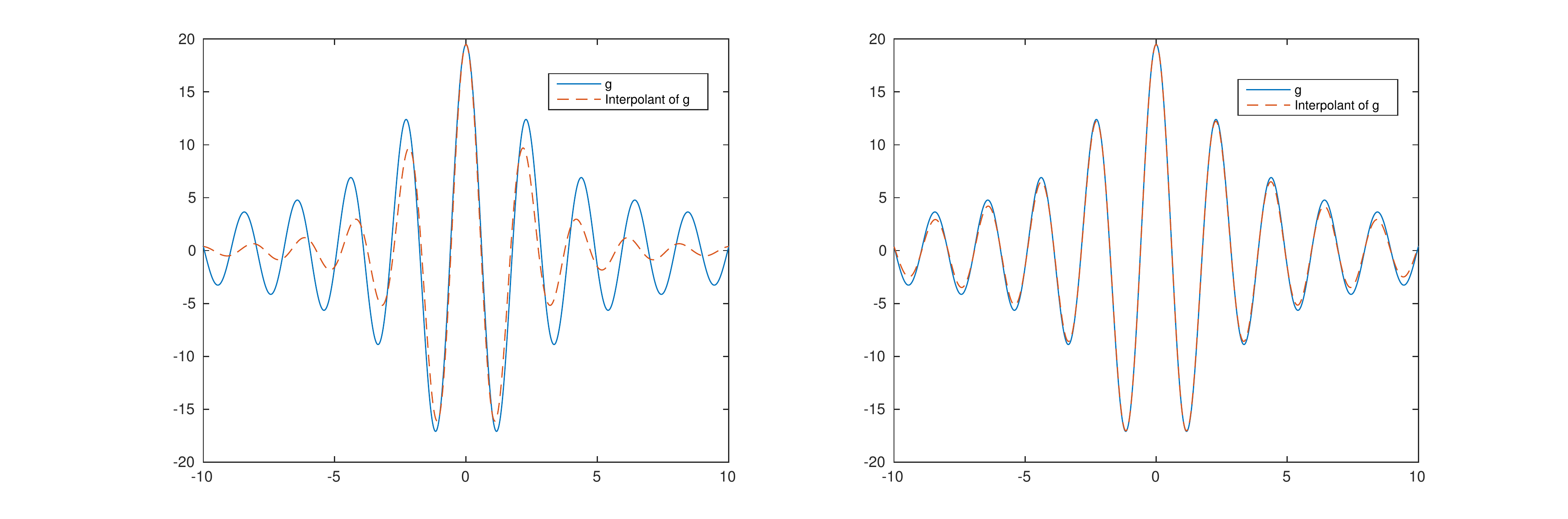}$$
 \caption{Plot of the function $g$ and its multiquadric interpolant for $\alpha = -1/2$ and both $c=1$ (left) and $c=10$ (right).}
\end{figure}

\section{Properties of the Fundamental Function}\label{SECpropfundamental}

For the rest of the paper, we turn our attentions to the one-dimensional cardinal interpolation operator associated with the general multiquadric.  This section is devoted to the one-dimensional fundamental function $L_{\alpha,c}$, whose Fourier transform can be rewritten as
\begin{equation}\label{EQ l1}
\widehat{L_{\alpha,c}}(\xi)=\left[1+ \sum_{j\neq 0}  \dfrac{\widehat{\phi_{\alpha,c}}(\xi+2\pi j)    }{\widehat{\phi_{\alpha,c}}(\xi)} \right]^{-1}.
\end{equation}
The proofs of the results in this section are quite technical, so we postpone them until Section \ref{SECproofs} and simply state our conclusions here.

To determine decay rates for $L_{\alpha,c}$, we determine how many derivatives $\widehat{L_{\alpha,c}}$ has in $L_1$, which we accomplish by establishing pointwise estimates. We begin by fixing $\varepsilon \in [0,1) $, so that our estimates fall into three ranges: $|\xi|\leq\pi(1-\varepsilon)$, $\pi(1+\varepsilon)<|\xi|\leq 3\pi$, and the $2\pi$-length blocks $[(-2j-1)\pi,(-2j+1)\pi]$ for $|j|\geq 2$.  Due to the differing behavior of $\widehat{\phi_{\alpha,c}}$ for positive and negative values of $\alpha$, we must make corresponding distinctions in our calculations.

Following the insightful techniques of Riemenschneider and Sivakumar found in \cite{RiemSiva}, we begin by defining some auxiliary functions to aid in the analysis of the fundamental function.  We abbreviate \eqref{EQ l1} as $\widehat{L_{\alpha,c}}(\xi)=(1+s_{\alpha,c}(\xi))^{-1}$, where
\begin{equation}\label{EQ l2}
s_{\alpha,c}(\xi):=\sum_{j\neq 0} \widehat{\phi_{\alpha,c}}(\xi+2\pi j) / \widehat{\phi_{\alpha,c}}(\xi) =:\sum_{j\neq 0} a_j(\xi),
\end{equation}
and study the properties of $a_j$.

\begin{proposition}\label{PROP l1}
Suppose that $\alpha\in (0,\infty)\setminus \N$, $\varepsilon\in [0,1)$, $c\geq 1 $, and $k\in\mathbb{N}_0$. If $|\xi|\leq \pi(1-\varepsilon)$ and $k\leq 2\alpha+1$, then there exists a constant $A_{\alpha,k}(\varepsilon) >0$ such that
\begin{equation}\label{EQ l3}
|a_j^{(k)}(\xi)|\leq A_{\alpha,k}(\varepsilon)   c^{(k+1)(2\alpha-\lfloor\alpha\rfloor)+k}   e^{-2\pi c\varepsilon} e^{-2\pi c (|j|-1)},
\end{equation}
where $A_{\alpha,k}(\varepsilon)=O(1)$ as $\varepsilon\to 0$.
\end{proposition}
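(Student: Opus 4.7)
The plan is to reduce everything to a scalar integral via the Bessel-function representation. Substituting \eqref{EQbesseldef} into \eqref{EQgmcft} and setting
\[
I_\alpha(y) := \int_1^\infty e^{-yx}(x^2-1)^\alpha\,dx,
\]
yields $\widehat{\phi_{\alpha,c}}(\xi) = \kappa_\alpha c^{2\alpha+1}\,I_\alpha(c|\xi|)$ for an $\alpha$-dependent constant $\kappa_\alpha$, so that the $c^{2\alpha+1}$ prefactors cancel in the quotient and
\[
a_j(\xi) = \frac{I_\alpha(c|\xi+2\pi j|)}{I_\alpha(c|\xi|)}.
\]
Two pointwise ingredients will drive the estimate: an upper bound $|I_\alpha^{(m)}(y)| \leq C_{\alpha,m}\, y^{-\alpha-1}e^{-y}$ for $y \geq \pi$ (covering $y = c|\xi+2\pi j| \geq \pi c(1+\varepsilon)$), obtained by splitting $\int_1^\infty = \int_1^2 + \int_2^\infty$ and applying a Laplace-type analysis via the substitution $u = y(x-1)$; and a lower bound $I_\alpha(y) \geq C_\alpha \min(y^{-2\alpha-1},\, y^{-\alpha-1}e^{-y})$ valid for all $y > 0$, the large-$y$ half of which follows from the crude estimate $(x^2-1)^\alpha \geq 2^\alpha(x-1)^\alpha$ on $x \geq 1$, and the small-$y$ half from localising the integrand near $x = 2/y$.

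I would then differentiate $a_j = I_\alpha(c|\xi + 2\pi j|) \cdot \bigl(1/I_\alpha(c|\xi|)\bigr)$ by the Leibniz rule, handling the derivatives of the reciprocal with Fa\`a di Bruno. Each application of $\partial_\xi$ contributes a factor of $c$ through the chain rule, which accounts for the ``$+k$'' in the claimed exponent. The exponential factors combine cleanly: since $|\xi+2\pi j| \geq \pi(2|j|-1+\varepsilon)$ and $|\xi| \leq \pi(1-\varepsilon)$, one has $|\xi+2\pi j| - |\xi| \geq 2\pi(|j|-1) + 2\pi\varepsilon$, producing the announced $e^{-2\pi c\varepsilon}e^{-2\pi c(|j|-1)}$. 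The $c^{(k+1)(2\alpha-\lfloor\alpha\rfloor)}$ contribution arises from performing $\lfloor\alpha\rfloor+1$ integrations by parts in the integral for $I_\alpha$ (the maximum number for which the boundary terms at $x = 1$ vanish, since $(x^2-1)^\alpha$ vanishes to order $\alpha > \lfloor\alpha\rfloor$ there); this rewrites $\widehat{\phi_{\alpha,c}}(\xi) = \textnormal{const}\cdot c^{2\alpha-\lfloor\alpha\rfloor}|\xi|^{-\lfloor\alpha\rfloor-1}J_\alpha(c|\xi|)$ for a regular kernel $J_\alpha$, and propagating the $c^{2\alpha-\lfloor\alpha\rfloor}$ through the Fa\`a di Bruno expansion of the reciprocal yields $k+1$ copies of this weight in the final bound.

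The main obstacle is the uniform treatment of the denominator $I_\alpha(c|\xi|)$ across the entire range $|\xi| \leq \pi(1-\varepsilon)$, which straddles the transition between the near-origin regime (where $I_\alpha(y) \sim \Gamma(2\alpha+1)\,y^{-2\alpha-1}$) and the regime $c|\xi|$ large (where $I_\alpha(y) \sim 2^\alpha\Gamma(\alpha+1)\,y^{-\alpha-1}e^{-y}$). The hypothesis $k \leq 2\alpha + 1$ is precisely what is needed to keep the derivatives of the singular factor $|\xi|^{\lfloor\alpha\rfloor+1}$ (emerging from $1/I_\alpha$ via the integration-by-parts representation) bounded on $[-\pi(1-\varepsilon),\pi(1-\varepsilon)]$; above that threshold these derivatives acquire non-integrable singularities at $\xi = 0$. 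Finally, the claim $A_{\alpha,k}(\varepsilon) = O(1)$ as $\varepsilon \to 0$ is immediate from the above estimates, since $\varepsilon$ enters only through harmless multiplicative factors like $(\pi(1-\varepsilon))^{-\alpha-1}$, which remain uniformly bounded near $\varepsilon = 0$.
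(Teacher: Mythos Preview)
Your overall strategy is equivalent to the paper's: the substitution $t=|\xi|(x-1)$ converts your $I_\alpha$ into the paper's Laplace-transform kernel $F_\alpha(\xi)=|\xi|^{2\alpha+1}I_\alpha(c|\xi|)$, and from there both arguments proceed by Leibniz/Fa\`a di Bruno on $a_j=\widehat{\phi_c}(\xi+2\pi j)\cdot(1/\widehat{\phi_c}(\xi))$. Your treatment of the exponential factor and of the $c^k$ coming from the chain rule is correct.

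Where your sketch breaks down is in the accounting for the power $c^{(k+1)(2\alpha-\lfloor\alpha\rfloor)}$. The integration-by-parts step does not produce a \emph{regular} kernel: $J_\alpha(y):=y^{\lfloor\alpha\rfloor+1}I_\alpha(y)$ still blows up at the origin like $y^{-(2\alpha-\lfloor\alpha\rfloor)}$, since $I_\alpha(y)\sim\Gamma(2\alpha+1)\,y^{-2\alpha-1}$. Consequently the factor $c^{2\alpha-\lfloor\alpha\rfloor}$ you isolate is just a constant prefactor that passes through Fa\`a di Bruno unchanged (once, with sign), not ``$k+1$ copies''. Relatedly, the constraint $k\le 2\alpha+1$ is governed by the near-origin behaviour $1/\widehat{\phi_c}(\xi)\sim C|\xi|^{2\alpha+1}$, not by $|\xi|^{\lfloor\alpha\rfloor+1}$; it is $|\xi|^{2\alpha+1}$ whose $k$-th derivative stays bounded precisely for $k\le 2\alpha+1$.

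In the paper the exponent $(k+1)(2\alpha-\lfloor\alpha\rfloor)$ does not come from integration by parts at all. It emerges from two-term bounds on $\mathcal{L}[t^\alpha(t+2|\xi|)^{\alpha-l}](c)$ obtained by splitting $\int_0^\infty=\int_0^{2|\xi|}+\int_{2|\xi|}^\infty$: the estimate changes form at $l=\lfloor\alpha\rfloor$ (from $|\xi|^{\alpha-l}/c^{\alpha+1}$ to $|\xi|^{2\alpha-l}/c$), and it is this dichotomy, propagated through the bound $|(1/\widehat{\phi_c})^{(l)}|\le A\,c^{l(2\alpha+1-\lfloor\alpha\rfloor)}e^{c|\xi|}|\xi|^{2\alpha+1-l}$, that produces the stated $c$-power after applying Leibniz. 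Your upper/lower bounds for $I_\alpha$ are the right raw ingredients, but the exponent must be extracted from a careful two-regime analysis of the reciprocal near $\xi=0$, not from the integration-by-parts heuristic you describe.
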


This estimate leads to the following bounds on $\widehat{L_{\alpha,c}}$ and its derivatives.

\begin{proposition}\label{PROP l2}
Suppose that $\alpha\in (0,\infty)\setminus \N$, $\varepsilon\in [0,1)$, $c\geq 1 $, and $k\in\mathbb{N}_0$. If $ k\leq 2\alpha+1$, then there exist constants $A_{\alpha,k}(\varepsilon), A_{\alpha,k} >0$ such that
\begin{enumerate}
\item[(i)] $|\widehat{L_{\alpha,c}}^{(k)}(\xi)|\leq A_{\alpha,k}(\varepsilon)   c^{2k(2\alpha-\floor\alpha\rfloor)+k}   e^{-2\pi c\varepsilon}$ whenever $|\xi|\leq \pi(1-\varepsilon)$,
\item[(ii)] $|\widehat{L_{\alpha,c}}^{(k)}(\xi)| \leq A_{\alpha,k}(\varepsilon) c^{(2k+1)(2\alpha-\lfloor\alpha\rfloor)+k} e^{-\pi c \varepsilon}$ whenever $|\xi|\in [(1+\varepsilon)\pi, 3\pi]$, and
\item[(iii)] $|\widehat{L_{\alpha,c}}^{(k)}(\xi)|\leq A_{\alpha,k}  c^{(2k+1)(2\alpha-\lfloor\alpha\rfloor)+k} e^{-2\pi c (|j|-1)} $ whenever $\xi\in[(-2j-1)\pi,\newline(-2j+1)\pi]$ and $|j|\geq 2$,
\end{enumerate}
where $A_{\alpha,k}(\varepsilon)=O(1)$ as $\varepsilon\to 0$.
\end{proposition}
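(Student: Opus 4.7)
The plan is to deduce the three bounds on $\widehat{L_{\alpha,c}}$ and its derivatives from the pointwise estimates of Proposition \ref{PROP l1} by combining Fa\`a di Bruno's formula with a periodicity trick that transports estimates from the base interval $[-\pi,\pi]$ to the farther intervals in (ii) and (iii).

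For part (i), I would begin by writing $\widehat{L_{\alpha,c}}(\xi) = h(s_{\alpha,c}(\xi))$ with $h(x) = (1+x)^{-1}$, and apply Fa\`a di Bruno's formula to express $\widehat{L_{\alpha,c}}^{(k)}(\xi)$ as a finite sum over partitions $k = k_1 + \cdots + k_r$ of terms of the form $h^{(r)}(s_{\alpha,c}(\xi))\prod_i s_{\alpha,c}^{(k_i)}(\xi)$. Each factor $s_{\alpha,c}^{(k_i)}(\xi) = \sum_{j\neq0} a_j^{(k_i)}(\xi)$ is bounded term-by-term using Proposition \ref{PROP l1}; the series $\sum_{j\neq 0} e^{-2\pi c(|j|-1)}$ is a convergent geometric sum uniformly bounded for $c\geq 1$. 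Proposition \ref{PROP l1} applied with $k=0$ additionally shows that $s_{\alpha,c}(\xi)$ itself is exponentially small, so $1+s_{\alpha,c}(\xi)$ stays bounded away from zero and each $h^{(r)}(s_{\alpha,c}(\xi)) = (-1)^r r!(1+s_{\alpha,c}(\xi))^{-(r+1)}$ is $O(1)$. A partition $(k_1,\ldots,k_r)$ contributes a $c$-power of $c^{(k+r)(2\alpha-\lfloor\alpha\rfloor)+k}$, which is maximized when $r=k$, yielding the claimed $c^{2k(2\alpha-\lfloor\alpha\rfloor)+k}$; at least one factor of $e^{-2\pi c\varepsilon}$ is always present.

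For parts (ii) and (iii), I exploit the $2\pi$-periodicity of $D(\xi) := \sum_j \widehat{\phi_{\alpha,c}}(\xi+2\pi j)$. Writing $\xi = \eta + 2\pi m$ with $\eta \in [-\pi,\pi]$, one has the identity
\[ \widehat{L_{\alpha,c}}(\xi) = \frac{\widehat{\phi_{\alpha,c}}(\eta+2\pi m)}{\widehat{\phi_{\alpha,c}}(\eta)}\,\widehat{L_{\alpha,c}}(\eta) = a_m(\eta)\,\widehat{L_{\alpha,c}}(\eta), \]
and Leibniz's rule gives $\widehat{L_{\alpha,c}}^{(k)}(\xi) = \sum_{l=0}^k \binom{k}{l} a_m^{(l)}(\eta)\,\widehat{L_{\alpha,c}}^{(k-l)}(\eta)$. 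For (iii), with $|m|\geq 2$, Proposition \ref{PROP l1} provides the factor $e^{-2\pi c(|m|-1)}$ in the bound on $a_m^{(l)}(\eta)$, while part (i) controls $\widehat{L_{\alpha,c}}^{(k-l)}(\eta)$. The bookkeeping $c^{(l+1)(2\alpha-\lfloor\alpha\rfloor)+l}\cdot c^{2(k-l)(2\alpha-\lfloor\alpha\rfloor)+(k-l)}$ is maximized at $l=0$, giving the announced $c^{(2k+1)(2\alpha-\lfloor\alpha\rfloor)+k}$. For (ii), with $m=\pm 1$, the factor $e^{-\pi c\varepsilon}$ comes from Lemma \ref{LEMbaxterphihatdecay} applied to the ratio $\widehat{\phi_{\alpha,c}}(\eta+2\pi m)/\widehat{\phi_{\alpha,c}}(\eta)$, since the condition $|\xi|\geq\pi(1+\varepsilon)$ forces the relevant radial distance to exceed $\pi\varepsilon$.

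The main obstacle is that the reduction $\xi\mapsto\eta$ in parts (ii) and (iii) does not automatically place $\eta$ in the interior range $|\eta|\leq\pi(1-\varepsilon)$ where part (i) directly applies, since $\eta$ may lie in a $\pi\varepsilon$-neighborhood of $\pm\pi$. I would handle this either by invoking the evenness of $\widehat{L_{\alpha,c}}$ to reflect $\eta$, or by repeating the Fa\`a di Bruno argument from (i) after factoring out the dominant term $a_{\mp 1}(\eta)$ of $s_{\alpha,c}(\eta)$ when $\eta$ is near $\pm\pi$, before inverting. A secondary concern is verifying that each constant $A_{\alpha,k}(\varepsilon)$ remains $O(1)$ as $\varepsilon\to 0$, which requires that $\varepsilon$ enter only through the exponential factors and never through any inverse power.
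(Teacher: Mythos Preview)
Your approach is essentially the paper's. Part (i) is proved exactly as you describe: the paper applies the partition-type expansion (its formula \eqref{EQ JL10}) to $(1+s_c)^{-1}$ and feeds in the bound \eqref{EQ JL14} on $s_c^{(k)}$ obtained by summing Proposition~\ref{PROP l1}. Parts (ii) and (iii) use precisely your periodicity identity $\widehat{L_c}(\xi)=a_m(\eta)\,\widehat{L_c}(\eta)$ followed by Leibniz.

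Your ``obstacle'' is over-stated for (iii): the paper simply takes $\varepsilon=0$ in both Proposition~\ref{PROP l1} and part (i), which is permitted since $\varepsilon\in[0,1)$, and the required decay $e^{-2\pi c(|m|-1)}$ already sits in the $a_m$ bound---no positive $\varepsilon$ on the base interval is needed. For (ii) the paper gives no argument at all, citing the analogous result in \cite{RiemSiva}; your diagnosis is correct that one cannot simply quote Proposition~\ref{PROP l1} as stated when $\eta$ is near $\pi$, and the fix is the one you gesture at: use the sharper exponential $e^{-c(|\eta+2\pi m|-|\eta|)}$ visible in the \emph{proof} of Proposition~\ref{PROP l1} (equation \eqref{EQ JL13} onward), which for $m=\pm 1$ and $\eta\in[-(1-\varepsilon)\pi,\pi]$ is at least $e^{-2\pi c\varepsilon}$, combined with part (i) at $\varepsilon=0$ for the $\widehat{L_c}$ factor.
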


These pointwise estimates yield the following result.

\begin{theorem}\label{THM l1}
Suppose that $\alpha\in (0,\infty)\setminus \N$, $c\geq 1 $, and $k\in\mathbb{N}_0$. If $ k\leq 2\alpha+1$, then there exists a constant $A_{\alpha,k} >0$ such that 
\begin{equation}\label{EQ l4}
\| \widehat{L_{\alpha,c}}^{(k)}  \|_{L_1(\mathbb{R})} \leq A_{\alpha,k} c^{(2k+1)(2\alpha-\floor{\alpha})+k}.
\end{equation}
\end{theorem}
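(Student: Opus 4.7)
Since the pointwise analytical work is already packaged in Proposition \ref{PROP l2}, the plan is to integrate the three pointwise bounds on $|\widehat{L_{\alpha,c}}^{(k)}|$ over their respective regions, geometric-sum the outer blocks, and read off the dominant power of $c$. The key observation is that if we specialize $\varepsilon = 0$ in Proposition \ref{PROP l2}, then the three regions partition $\mathbb{R}$ up to a set of measure zero: $[-\pi,\pi]$ from (i), $\{\pi \leq |\xi| \leq 3\pi\}$ from (ii), and the length-$2\pi$ blocks $[(-2j-1)\pi,(-2j+1)\pi]$ with $|j| \geq 2$ from (iii). No transition strip is needed, and the asserted $O(1)$ behavior of $A_{\alpha,k}(\varepsilon)$ as $\varepsilon \to 0$ guarantees that $A_{\alpha,k}(0)$ is a finite constant.

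Integrating part (i) over the length-$2\pi$ interval $[-\pi,\pi]$ yields a contribution of order $c^{2k(2\alpha-\lfloor\alpha\rfloor)+k}$. Integrating part (ii) over the length-$4\pi$ set $\{\pi \leq |\xi| \leq 3\pi\}$ contributes a term of order $c^{(2k+1)(2\alpha-\lfloor\alpha\rfloor)+k}$. For part (iii), each block has length $2\pi$, so after integration one is left with the series
$$\sum_{|j| \geq 2} 2\pi A_{\alpha,k}\, c^{(2k+1)(2\alpha-\lfloor\alpha\rfloor)+k}\, e^{-2\pi c(|j|-1)}.$$
Because $c \geq 1$, the factor $\sum_{|j| \geq 2} e^{-2\pi c(|j|-1)}$ is dominated by the corresponding geometric series at $c = 1$ and is therefore bounded by an absolute constant.

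Finally, summing the three contributions and noting that $2\alpha - \lfloor\alpha\rfloor > 0$ (so the exponent from (ii) and (iii) dominates the one from (i)), the prefactor $c^{(2k+1)(2\alpha-\lfloor\alpha\rfloor)+k}$ absorbs the remaining terms, yielding the claimed bound with $A_{\alpha,k}$ absorbing all constants.

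The essentially only thing that could go wrong is a mismatch at the boundaries of the regions; this is handled by taking $\varepsilon = 0$ so the regions tile $\mathbb{R}$ exactly, which is legitimate because the constants in Proposition \ref{PROP l2} remain finite in that limit. All the genuine technical effort, including controlling $\widehat{L_{\alpha,c}}^{(k)}$ in the singular zones near $\pm\pi$, is already absorbed into the pointwise estimates; the theorem itself is then a straightforward corollary.
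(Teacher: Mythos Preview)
Your proposal is correct and is exactly the approach the paper takes: the paper's proof consists of the single sentence ``Applying the pointwise estimates from Proposition \ref{PROP l2} establishes the $L_1$ bound,'' and you have simply written out the details of that application, including the choice $\varepsilon=0$ to tile $\R$, the geometric summation of the outer blocks, and the observation that the exponent from (ii)--(iii) dominates the one from (i).
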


Using standard arguments, we have the following estimate for the growth rate of $L_{\alpha,c}$.

\begin{corollary}\label{COR l3}
If $\alpha\in (0,\infty)\setminus \N$ and $c\geq 1$, then $L_{\alpha,c}(x)=O(|x|^{-\lfloor 2\alpha +1    \rfloor })$ as $|x|\to\infty$.
\end{corollary}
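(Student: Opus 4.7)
The plan is to deduce the decay of $L_{\alpha,c}$ from the $L_1$ bound on derivatives of $\Lachat$ given by Theorem \ref{THM l1}, via the standard dictionary between smoothness on the Fourier side and decay on the physical side. Since Proposition \ref{PROPLchatintegrable} guarantees $\Lachat \in L_1(\R)$, the inversion formula gives
\begin{equation*}
L_{\alpha,c}(x) = \frac{1}{2\pi}\int_\R \Lachat(\xi)\, e^{ix\xi}\, d\xi, \qquad x \in \R.
\end{equation*}
Setting $k = \lfloor 2\alpha + 1\rfloor$, the goal is to express $(ix)^k L_{\alpha,c}(x)$ as the Fourier integral of $\Lachat^{(k)}$ and then apply the triangle inequality together with Theorem \ref{THM l1}.

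The key step is integration by parts $k$ times in the integral above. Since $\Lachat^{(j)} \in L_1(\R)$ for every $0 \leq j \leq k$ by Theorem \ref{THM l1}, each $\Lachat^{(j)}$ (for $j<k$) is absolutely continuous as the indefinite integral of $\Lachat^{(j+1)} \in L_1$, and being itself integrable it must tend to $0$ at $\pm\infty$. Thus all boundary contributions vanish, and one obtains
\begin{equation*}
(ix)^k L_{\alpha,c}(x) = \frac{(-1)^k}{2\pi}\int_\R \Lachat^{(k)}(\xi)\, e^{ix\xi}\, d\xi.
\end{equation*}
Taking absolute values,
\begin{equation*}
|x|^k\, |L_{\alpha,c}(x)| \leq \frac{1}{2\pi}\, \| \Lachat^{(k)}\|_{L_1(\R)} \leq A_{\alpha,k}\, c^{(2k+1)(2\alpha - \lfloor\alpha\rfloor) + k},
\end{equation*}
by Theorem \ref{THM l1}. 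Rearranging, $|L_{\alpha,c}(x)| = O(|x|^{-k}) = O(|x|^{-\lfloor 2\alpha + 1\rfloor})$ as $|x|\to\infty$, which is exactly the claimed growth rate.

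The only real delicacy lies in justifying the integration by parts rigorously. The pointwise derivative estimates of Proposition \ref{PROP l2} are stated on the three ranges $|\xi|\leq\pi(1-\varepsilon)$, $\pi(1+\varepsilon)\leq|\xi|\leq 3\pi$, and the $2\pi$-blocks around $\xi = 2\pi j$ for $|j|\geq 2$, which together cover $\R$ up to a measure-zero set of lattice points $2\pi\Z$. On each open subinterval, $\Lachat$ is a smooth ratio of smooth functions and integration by parts is classical; since $\Lachat^{(j)} \in L_1$ on all of $\R$ for $j \leq k$, the contributions from the lattice points do not cause boundary issues when one writes the integral as a limit of integrals over expanding compact sets avoiding $2\pi\Z$. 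This is the main technical point, but it is handled in a by-now standard way. With this in hand, the corollary follows from the inequality displayed above.
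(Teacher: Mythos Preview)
Your argument is correct and is precisely the ``standard argument'' the paper invokes: integrability of $\widehat{L_{\alpha,c}}^{(k)}$ for $k=\lfloor 2\alpha+1\rfloor$ (Theorem~\ref{THM l1}) transfers via integration by parts to $|x|^{-k}$ decay of $L_{\alpha,c}$. One minor remark: your final paragraph overstates the difficulty. The three ranges in Proposition~\ref{PROP l2} already cover all of $\R$ when $\varepsilon=0$ (the constants $A_{\alpha,k}(\varepsilon)$ remain bounded as $\varepsilon\to 0$), and the analysis in Section~\ref{SECproofs} shows that $\widehat{L_{\alpha,c}}$ is genuinely $C^k$ across the points $2\pi\Z$, so there is no need to excise lattice points or argue via limits of punctured intervals---ordinary integration by parts on $\R$ applies directly.
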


Analogous estimates can be made for the case that $\alpha<-1$.  Of interest to us are the following results.

\begin{theorem}\label{THM l2}
Suppose that $\alpha<-1$, $c\geq 1 $, and $k\in\mathbb{N}_0$. If $ k< 2|\alpha|-1$, then there exists a constant $A_{\alpha,k} >0$ such that 
\begin{equation}\label{EQ l5}
\| \widehat{L_{\alpha,c}}^{(k)}  \|_{L_1(\mathbb{R})} \leq A_{\alpha,k} c^{(2k+1)(2|\alpha|-\lfloor |\alpha| \rfloor-1)+k}.
\end{equation}
\end{theorem}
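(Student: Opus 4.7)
The plan is to mirror the architecture used to obtain Theorem \ref{THM l1}, replacing each ingredient by its $\alpha<-1$ analogue. That is, I would first derive a pointwise decay estimate for $a_j^{(k)}$ in the three regions $|\xi|\leq \pi(1-\varepsilon)$, $|\xi|\in[(1+\varepsilon)\pi,3\pi]$, and $\xi\in[(-2j-1)\pi,(-2j+1)\pi]$ with $|j|\geq 2$ (the analogue of Proposition \ref{PROP l1}); then upgrade these to pointwise bounds on $\widehat{L_{\alpha,c}}^{(k)}$ on the same three regions (the analogue of Proposition \ref{PROP l2}); then integrate, choosing $\varepsilon$ of order $1/c$ to balance the factor $e^{-\pi c\varepsilon}$ against the constant $A_{\alpha,k}(\varepsilon)$ that blows up as $\varepsilon\to 0$.

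The central structural point is that when $\alpha<-1$ the identity $K_{-\nu}=K_\nu$ lets us rewrite
\[
\widehat{\phi_{\alpha,c}}(\xi)=\frac{2^{1+\alpha}}{\Gamma(-\alpha)}\Bigl(\frac{|\xi|}{c}\Bigr)^{|\alpha|-\frac12}K_{|\alpha|-\frac12}(c|\xi|),
\]
so the roles of polynomial and Bessel factors compared to the $\alpha>0$ setting are inverted. Consequently, when I differentiate $a_j(\xi)=\widehat{\phi_{\alpha,c}}(\xi+2\pi j)/\widehat{\phi_{\alpha,c}}(\xi)$ via Leibniz and the recurrence relations for $K_\nu$, every derivative of the polynomial factor $|\xi|^{|\alpha|-1/2}$ contributes a term of the form $|\xi|^{|\alpha|-1/2-j}$, while derivatives of $K_{|\alpha|-1/2}(c|\xi|)$ pick up powers of $c$ from the chain rule and are bounded from above using the same exponential-plus-polynomial envelope from \cite[Lemma 5.13]{Wendland} used in the proof of Proposition \ref{PROPLchatintegrable}. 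The fractional part $|\alpha|-\lfloor|\alpha|\rfloor$ appears in the worst-case behavior of the denominator $\widehat{\phi_{\alpha,c}}(\xi)$ near the zeros of $\xi\mapsto |\xi|$, and the hypothesis $k<2|\alpha|-1$ is precisely what prevents non-integrable singularities from appearing in $a_j^{(k)}$ and (after forming $\widehat{L_{\alpha,c}}^{(k)}$ via Faà di Bruno applied to $(1+s_{\alpha,c})^{-1}$) in $\widehat{L_{\alpha,c}}^{(k)}$ as well.

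Once the three pointwise estimates are in hand, the $L_1$ bound follows by integrating on each region. The central interval $|\xi|\leq \pi(1-\varepsilon)$ contributes a constant multiple of $c^{2k(2|\alpha|-\lfloor|\alpha|\rfloor-1)+k}\,e^{-2\pi c\varepsilon}$, the transition annulus contributes $c^{(2k+1)(2|\alpha|-\lfloor|\alpha|\rfloor-1)+k}\,e^{-\pi c\varepsilon}$, and the outer blocks sum to a convergent geometric-type series $\sum_{|j|\geq 2} e^{-2\pi c(|j|-1)}$ times the same polynomial factor in $c$. Taking $\varepsilon\asymp 1/c$ (or simply $\varepsilon=0$ when the exponentials are harmless) collapses all three contributions into the single bound $A_{\alpha,k} c^{(2k+1)(2|\alpha|-\lfloor|\alpha|\rfloor-1)+k}$, as claimed.

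The main obstacle is the first step: obtaining the pointwise estimate for $a_j^{(k)}$ when $\alpha<-1$. Compared to the case $\alpha>0$, where the polynomial prefactor in $\widehat{\phi_{\alpha,c}}$ is benign and the Bessel factor carries the decay, here the polynomial factor $|\xi|^{|\alpha|-1/2}$ is both the main source of growth at infinity and the source of potential trouble near $\xi=0$, so the Leibniz expansion of $a_j^{(k)}$ has many more non-trivially contributing terms and one must track the exact power of $c$ that each derivative of $K_{|\alpha|-1/2}(c|\xi|)$ contributes. I expect that the sharp exponent $(2k+1)(2|\alpha|-\lfloor|\alpha|\rfloor-1)+k$ in the conclusion is what dictates the careful choice of how to split the Leibniz terms, and that verifying this precise exponent — rather than merely obtaining some polynomial-in-$c$ bound — will be the technical heart of the argument, handled in Section \ref{SECproofs}.
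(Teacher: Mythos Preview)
Your overall plan---three-region pointwise bounds on $a_j^{(k)}$, lifted to $\widehat{L_{\alpha,c}}^{(k)}$, then integrated---is exactly the paper's architecture. Where you diverge is in the engine driving the pointwise estimates. You propose to work directly with the Bessel representation $\widehat{\phi_{\alpha,c}}(\xi)=A_\alpha(|\xi|/c)^{|\alpha|-1/2}K_{|\alpha|-1/2}(c|\xi|)$, differentiating via Leibniz and the recurrences for $K_\nu$ and then tracking cancellations between the polynomial and Bessel factors near the origin. The paper instead notes that for $\alpha<-1$ the Laplace-transform rewriting collapses to $\widehat{\phi_c}(\xi)=A_\alpha F_{|\alpha|-1}(\xi)$ (equation \eqref{EQ JL4}) with \emph{no} polynomial prefactor at all, so the entire positive-$\alpha$ machinery (Lemmas \ref{LEM JL1}--\ref{LEM JL4}, Propositions \ref{PROP l1}--\ref{PROP l2}) transfers verbatim under the substitution $\alpha\mapsto|\alpha|-1$. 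The exponent $2|\alpha|-\lfloor|\alpha|\rfloor-1$ is then simply $2\beta-\lfloor\beta\rfloor$ with $\beta=|\alpha|-1$, and the restriction $k<2|\alpha|-1$ is $k<2\beta+1$. Your route would eventually arrive at the same place, but the cancellation bookkeeping you flag as the ``technical heart'' is precisely what the Laplace-transform identity eliminates in one stroke.

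One small correction: there is no need to take $\varepsilon$ of order $1/c$. Since the constants $A_{\alpha,k}(\varepsilon)$ in the analogue of Proposition \ref{PROP l2} stay $O(1)$ as $\varepsilon\to0$, one simply sets $\varepsilon=0$; the three regions then cover $\mathbb{R}$, the exponential factors become $1$, and the tail sum $\sum_{|j|\geq2}e^{-2\pi c(|j|-1)}$ is bounded uniformly in $c\geq1$.
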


\begin{corollary}\label{COR l4}
If $\alpha<-1$ and $c\geq 1$, then $L_{\alpha,c}(x)=O(|x|^{-\lceil 2|\alpha| -2    \rceil })$ as $|x|\to\infty$.
\end{corollary}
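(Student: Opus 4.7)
The plan is to deduce the polynomial decay of $L_{\alpha,c}$ directly from the $L_1$-integrability of the derivatives of $\widehat{L_{\alpha,c}}$ supplied by Theorem \ref{THM l2}, via the standard Fourier-analytic decay principle. If $\widehat{L_{\alpha,c}},\widehat{L_{\alpha,c}}',\ldots,\widehat{L_{\alpha,c}}^{(k)}$ are all in $L_1(\R)$ and the first $k-1$ derivatives vanish at $\pm\infty$, then $k$ integrations by parts in the Fourier inversion formula give
\[
(ix)^k L_{\alpha,c}(x) \;=\; \dfrac{(-1)^k}{2\pi}\dint_{\R}\widehat{L_{\alpha,c}}^{(k)}(\xi)\,e^{ix\xi}\,d\xi,
\]
so that $|L_{\alpha,c}(x)|\leq (2\pi|x|^k)^{-1}\|\widehat{L_{\alpha,c}}^{(k)}\|_{L_1(\R)}$, yielding $L_{\alpha,c}(x)=O(|x|^{-k})$ as $|x|\to\infty$. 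The whole argument is then a matter of choosing $k$ as large as possible subject to Theorem \ref{THM l2}.

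First I would set $k:=\lceil 2|\alpha|-2\rceil$. Since $\alpha<-1$, $k$ is a positive integer. A short case-check shows that $k<2|\alpha|-1$ in every case: if $2|\alpha|\in\Z$ then $k=2|\alpha|-2<2|\alpha|-1$, and if $2|\alpha|\notin\Z$ then $k=\lfloor 2|\alpha|-2\rfloor+1<2|\alpha|-1$ by strict non-integrality. Furthermore $k$ is actually the largest integer satisfying this strict bound, so the chosen exponent is optimal for this method. Applying Theorem \ref{THM l2} at each $j\in\{0,1,\dots,k\}$ then yields $\widehat{L_{\alpha,c}}^{(j)}\in L_1(\R)$ throughout this range.

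To license the integrations by parts, I would verify that the boundary terms at $\pm\infty$ vanish: each $\widehat{L_{\alpha,c}}^{(j)}$ with $j<k$ is an antiderivative of the integrable function $\widehat{L_{\alpha,c}}^{(j+1)}$, hence is absolutely continuous and possesses limits at $\pm\infty$; those limits must equal $0$ because $\widehat{L_{\alpha,c}}^{(j)}$ is itself in $L_1(\R)$. Feeding this into the displayed formula completes the argument.

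Since Theorem \ref{THM l2} has already absorbed essentially all of the analytic work, the main obstacle remaining in this corollary is purely combinatorial: confirming that $\lceil 2|\alpha|-2\rceil$ is indeed the largest integer strictly less than $2|\alpha|-1$, which requires the separate integer and non-integer treatments of $2|\alpha|$ indicated above. This also explains the discrepancy with Corollary \ref{COR l3}, where the non-strict condition $k\leq 2\alpha+1$ of Theorem \ref{THM l1} produces a floor instead of a ceiling in the decay exponent.
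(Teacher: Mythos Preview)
Your argument is correct and is precisely the ``standard argument'' the paper invokes (without writing it out) immediately before Corollary~\ref{COR l3}: integrability of $\widehat{L_{\alpha,c}}^{(k)}$ for all $k<2|\alpha|-1$ from Theorem~\ref{THM l2}, followed by $k$ integrations by parts in the inversion formula with vanishing boundary terms. Your verification that $\lceil 2|\alpha|-2\rceil$ is the largest admissible $k$, together with the remark contrasting the strict inequality here with the non-strict one in Theorem~\ref{THM l1}, is a nice clarification the paper leaves implicit.
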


It turns out that the Poisson kernel, which is the case $\alpha=-1$, is a special case, and exhibits much better decay because $\widehat{\phi_{-1,c}}$ is purely an exponential function.

\begin{theorem}\label{THM KPoissonFTdecay}
Suppose that $c\geq1$.  Then for every $k\in\N_0$, there exists a constant $A_k>0$ such that
\begin{equation}\label{EQ KPoissonFTdecay}
\|\widehat{L_{-1,c}}^{(k)}\|_{L_1(\R)}\leq A_kc^k.
\end{equation}
\end{theorem}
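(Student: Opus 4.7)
The key feature of the Poisson case is that \eqref{EQgmcft} collapses to an essentially pure exponential: $\widehat{\phi_{-1,c}}(\xi) = D\,c^{-1}e^{-c|\xi|}$ for an absolute constant $D$ (the Fourier transform of the one-dimensional Poisson kernel). Since $D$ cancels in the ratio \eqref{EQLchatdef}, we have $\widehat{L_{-1,c}}(\xi) = e^{-c|\xi|}/\sum_{j\in\Z}e^{-c|\xi+2\pi j|}$, and the denominator can be summed in closed form. The first step of my plan is to evaluate this sum: for $\xi\in[0,2\pi]$, the geometric series yields
\begin{equation*}
\sum_{j\in\Z}e^{-c|\xi+2\pi j|}=\frac{e^{-c\xi}+e^{c(\xi-2\pi)}}{1-e^{-2\pi c}},\qquad\text{hence}\qquad\widehat{L_{-1,c}}(\xi)=\frac{1-e^{-2\pi c}}{1+e^{2c(\xi-\pi)}}.
\end{equation*}
The $2\pi$-periodicity of the denominator in \eqref{EQLchatdef} and the evenness of $\widehat{L_{-1,c}}$ allow me to extend this to every interval $J_n:=[2n\pi,2(n+1)\pi]$, $n\in\Z$: up to a sign in the argument, $\widehat{L_{-1,c}}(\xi)=A_n\,h(\epsilon_n\cdot 2c(\xi-(2n+1)\pi))$, where $h(t):=(1+e^t)^{-1}$, $\epsilon_n\in\{\pm 1\}$, and the prefactor $A_n=(1-e^{-2\pi c})\,e^{-2\mu_n\pi c}$ decays geometrically in $|n|$ (here $\mu_n:=\max(n,-n-1)\geq 0$).

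Next I would bound the derivatives of $h$. From $h'(t)=-h(t)(1-h(t))$ and the easy estimate $h(t)(1-h(t))\leq \tfrac14 e^{-|t|}$, an induction shows that $h^{(k)}=h(1-h)\,P_k(h)$ for some polynomial $P_k$; hence for each $k\geq 1$ there is a constant $C_k$ with $|h^{(k)}(t)|\leq C_k e^{-|t|}$ on all of $\R$. Applying the chain rule to the closed-form expression above then yields, for $\xi\in J_n$ and $k\geq 1$,
\begin{equation*}
|\widehat{L_{-1,c}}^{(k)}(\xi)|\leq C_k\,A_n\,(2c)^k\,e^{-2c|\xi-(2n+1)\pi|}.
\end{equation*}

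The final step is to integrate this pointwise bound and sum. Since $\int_{J_n}e^{-2c|\xi-(2n+1)\pi|}\,d\xi\leq 1/c$, each interval contributes at most $C_k A_n 2^k c^{k-1}$, and $\sum_{n\in\Z}A_n=2$ (a convergent geometric sum, uniformly bounded in $c$). This gives $\|\widehat{L_{-1,c}}^{(k)}\|_{L_1(\R)}\leq A_k c^{k-1}\leq A_k c^k$ for $k\geq 1$; the case $k=0$ is immediate from $|h|\leq 1$ and the fixed length of each $J_n$. The main technical hurdle will be the careful derivation and bookkeeping of the explicit formula on each $J_n$—in particular, tracking the sign $\epsilon_n$ and the prefactor $A_n$ through the symmetries—together with the uniform bound on $|h^{(k)}|$; the rest is elementary calculus.
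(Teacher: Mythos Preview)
Your argument is correct and genuinely different from the paper's. The paper does not compute the periodization in closed form; instead it simply observes that for $\alpha=-1$ one has $\widehat{\phi_c}(\xi)=(A/c)e^{-c|\xi|}$, so the auxiliary quotients $a_j(\xi)=e^{c|\xi|-c|\xi+2\pi j|}$ satisfy $|a_j^{(k)}(\xi)|\le Ac^{k}e^{c|\xi|-c|\xi+2\pi j|}$ for \emph{every} $k$, and then feeds this into the same three-region machinery (the analogues of Proposition~\ref{PROP l2} and Lemmas~\ref{LEM JL5}--\ref{LEM JL7}) already built for general $\alpha$. Your route instead sums the geometric series to write $\widehat{L_{-1,c}}$ on each block $J_n$ as $A_n\,h(\pm 2c(\xi-(2n+1)\pi))$ with $h(t)=(1+e^t)^{-1}$, and then exploits the elementary bound $|h^{(k)}(t)|\le C_k e^{-|t|}$ together with the geometric decay of $A_n$. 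This is more explicit and self-contained---indeed it yields the slightly sharper estimate $A_k c^{k-1}$ for $k\ge 1$---whereas the paper's approach is shorter precisely because it reuses the general framework already in place. One small slip: the inequality $h(t)(1-h(t))\le \tfrac14 e^{-|t|}$ is false for $t\neq 0$ (at $t>0$ it would force $e^t\le 1$); the correct uniform bound is $h(1-h)\le e^{-|t|}$, which is all you actually use.
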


\begin{corollary}\label{COR KPoissondecay}
Suppose that $c\geq1$.  Then $L_{-1,c}(x)=O(|x|^{-k})$ as $|x|\to\infty$ for every $k\in\N_0$.
\end{corollary}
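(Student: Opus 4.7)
The plan is to deduce the decay of $L_{-1,c}$ from the $L_1$ control of its Fourier transform's derivatives provided by Theorem \ref{THM KPoissonFTdecay}, via the standard principle that smoothness of $\widehat{L_{-1,c}}$ translates to decay of $L_{-1,c}$. Fix $k\in\N_0$ and abbreviate $g:=\widehat{L_{-1,c}}$. Starting from the Fourier inversion representation
$$L_{-1,c}(x) = \frac{1}{2\pi}\int_\R g(\xi)\,e^{ix\xi}\,d\xi,$$
I would multiply through by $x^k$ and use the identity $x^k e^{ix\xi} = (1/i^k)\,\partial_\xi^k\,e^{ix\xi}$ to obtain
$$x^k L_{-1,c}(x) \;=\; \frac{1}{2\pi\, i^k}\int_\R \big(\partial_\xi^k e^{ix\xi}\big)\, g(\xi)\,d\xi.$$
Integrating by parts $k$ times then gives
$$x^k L_{-1,c}(x) \;=\; \frac{(-1)^k}{2\pi\, i^k}\int_\R g^{(k)}(\xi)\,e^{ix\xi}\,d\xi,$$
from which the crude bound $|x^k L_{-1,c}(x)| \leq (2\pi)^{-1}\|g^{(k)}\|_{L_1(\R)}$ follows immediately. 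Inserting the estimate $\|g^{(k)}\|_{L_1(\R)}\leq A_k c^k$ from Theorem \ref{THM KPoissonFTdecay} yields $|L_{-1,c}(x)| \leq A_k c^k/(2\pi |x|^k)$ for $x\neq 0$, which is exactly $L_{-1,c}(x)=O(|x|^{-k})$ as $|x|\to\infty$.

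The one step that requires actual care is justifying that the boundary terms in the $k$-fold integration by parts all vanish, which is where I expect the only real work to live. To handle this, I would note that Theorem \ref{THM KPoissonFTdecay} provides $g^{(j)}\in L_1(\R)$ for \emph{every} $j\in\N_0$; in particular for $j=0,1,\ldots,k-1$ the function $g^{(j)}$ is the indefinite integral of $g^{(j+1)}\in L_1$, hence is absolutely continuous and has a limit at $\pm\infty$. Since $g^{(j)}$ itself lies in $L_1(\R)$, that limit must be $0$, so each boundary term $[g^{(j)}(\xi)\,\partial_\xi^{k-1-j} e^{ix\xi}/\,\text{(powers of }ix\text{)}]_{-\infty}^{\infty}$ vanishes for $x\neq 0$. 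This being a routine application of the absolutely continuous/$L_1$ lemma, the corollary then falls out directly from Theorem \ref{THM KPoissonFTdecay} by choosing $k$ as large as desired, giving decay faster than any polynomial rate.
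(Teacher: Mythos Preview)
Your argument is correct and is precisely the ``standard argument'' the paper alludes to (just before Corollary \ref{COR l3}) but does not write out: Fourier inversion, integration by parts $k$ times, and a bound by $\|\widehat{L_{-1,c}}^{(k)}\|_{L_1}$ via Theorem \ref{THM KPoissonFTdecay}. Your treatment of the boundary terms via the $L_1$/absolute continuity lemma is also the expected one, so there is nothing to add.
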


We may refine the above estimates in the case $k=1$ to find a uniform bound on the $L_1-$norm of $\widehat{L_{\alpha,c}}'$.
\begin{theorem}\label{THM l3}
Suppose that $\alpha\in \left((-\infty,-1]\cup (0,\infty)\right)\setminus\mathbb{N} $.  There exists a constant $A_{\alpha}>0$ such that for all $c\geq 1$, $\| \widehat{L_{\alpha,c}}'  \|_{L_1(\mathbb{R})}\leq A_\alpha$.
\end{theorem}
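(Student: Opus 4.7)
The plan is to refine the argument behind Theorems \ref{THM l1}, \ref{THM l2}, and \ref{THM KPoissonFTdecay} in the specific case $k=1$ by letting the threshold parameter in the pointwise estimates depend on $c$. Writing $\|\Lachat'\|_{L_1(\R)}$ as an integral over $\R$, I would partition $\R$ into four pieces,
\[
R_1=[-\pi(1-\eps_c),\pi(1-\eps_c)],\qquad R_{\mathrm{tr}}=\{\pi(1-\eps_c)\le|\xi|\le\pi(1+\eps_c)\},
\]
\[
R_2=\{\pi(1+\eps_c)\le|\xi|\le 3\pi\},\qquad R_3=\bigcup_{|j|\ge 2}[(2j-1)\pi,(2j+1)\pi],
\]
and estimate each separately, with $\eps_c:=q_\alpha\log(c)/c$ for a sufficiently large constant $q_\alpha$ depending only on $\alpha$.

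On $R_3$, applying Proposition \ref{PROP l2}(iii) (or its $\alpha\le-1$ counterpart) and summing the resulting geometric series in $j$ produces an exponentially small bound of the form $Ac^{p} e^{-2\pi c}$, uniformly bounded (in fact vanishing) for $c\ge 1$. On $R_1$ and $R_2$, Proposition \ref{PROP l2}(i), (ii) yield pointwise bounds $A c^{p_1} e^{-2\pi c\eps_c}$ and $A c^{p_2} e^{-\pi c\eps_c}$ respectively; choosing $q_\alpha$ larger than $\max\{p_1/(2\pi),\,p_2/\pi\}$ turns each $c^{p_j}$ into a negative power of $c$ after the substitution $\eps_c=q_\alpha\log(c)/c$, so the resulting integrals over intervals of length $O(1)$ are uniformly bounded in $c$. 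Analogous bounds from Theorem \ref{THM l2} and Theorem \ref{THM KPoissonFTdecay} handle the ranges $\alpha<-1$ and $\alpha=-1$; the Poisson case is the cleanest since $\phica$ is then purely exponential up to a $c$-dependent constant.

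The delicate part is the transition strip $R_{\mathrm{tr}}$, of total length $O(\log(c)/c)$, on which $|\Lachat'|$ is pointwise of order $c$; the naive triangle-inequality bound here would be $O(\log c)$, which is not good enough. My plan is to prove that $\Lachat$ is monotone on each connected component of $R_{\mathrm{tr}}$ (decreasing on the positive component, and by the evenness of $\Lachat$ increasing on the negative one), which reduces the integral there to a telescoping difference,
\[
\dint_{R_{\mathrm{tr}}}|\Lachat'(\xi)|\,d\xi \;=\; 2\bigl|\Lachat(\pi(1-\eps_c))-\Lachat(\pi(1+\eps_c))\bigr|\;\le\;2,
\]
since $0\le\Lachat\le 1$ by \eqref{EQLchatdef}. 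Adding the four contributions then yields the desired constant independent of $c$.

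The main obstacle will be verifying this monotonicity. Writing $\Lachat=(1+s_{\alpha,c})^{-1}$ via \eqref{EQ l2}, monotonicity of $\Lachat$ reduces to showing that $s_{\alpha,c}$ is strictly increasing on $[\pi(1-\eps_c),\pi(1+\eps_c)]$. The dominant term there is $\phica(\xi-2\pi)/\phica(\xi)=\phica(2\pi-\xi)/\phica(\xi)$, whose derivative is bounded below by a positive multiple of $c$ because $\phica$ decays exponentially with rate $c$ in this range, by the same Bessel asymptotics for $K_{\alpha+\frac12}$ used in the proofs of Propositions \ref{PROP l1} and \ref{PROP l2}. The remaining terms $\phica(\xi+2\pi j)/\phica(\xi)$ for $j\ne 0,-1$ have derivatives controlled by $ce^{-2\pi c}$ and so cannot disturb the monotonicity for $c\ge 1$. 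The technical part is turning these heuristic estimates into clean inequalities with constants independent of $c$, uniformly over each of the three admissible ranges of $\alpha$.
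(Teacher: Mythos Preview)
Your proposal shares the decisive idea with the paper's proof: handle the region around $|\xi|=\pi$ by proving that $\Lachat$ is monotone there, so that the $L_1$-norm of $\Lachat'$ over that piece telescopes to a difference of values and is bounded by~$1$. The paper's execution, however, is considerably simpler in two respects.

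First, the paper takes a \emph{fixed} $\varepsilon=1/2$. Then on $[0,\pi/2]$ Proposition~\ref{PROP l2}(i) already gives a bound $A_\alpha c^{p_1}e^{-\pi c}$, and on $[3\pi/2,\infty)$ parts (ii)--(iii) give $A_\alpha c^{p_2}e^{-\pi c/2}$; both are uniformly bounded for $c\ge1$ because the exponential dominates any fixed power of $c$. No $c$-dependent threshold is needed. Your choice $\eps_c=q_\alpha\log(c)/c$ actually runs into a small snag: since $\max_{c\ge1}\log(c)/c=1/e$, keeping $\eps_c<1$ forces $q_\alpha<e$, while your condition $q_\alpha>\max\{p_1/2\pi,\,p_2/\pi\}$ can exceed $e$ once $|\alpha|$ is moderately large (for instance $p_2=3(2\alpha-\lfloor\alpha\rfloor)+1$ is already near $10$ for $\alpha$ just below $2$). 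This is repairable by handling a bounded range of $c$ separately via Theorems~\ref{THM l1}--\ref{THM l2}, but the fixed-$\varepsilon$ route avoids the issue entirely.

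Second, the paper proves monotonicity on the whole interval $[\pi/2,3\pi/2]$ by a direct sign computation: writing $\Lachat'$ via the quotient rule and using the Bessel identity $\phica'(\xi)=-A_\alpha c^{\alpha+3/2}\sgn(\xi)|\xi|^{-\alpha-1/2}K_{\alpha+3/2}(c|\xi|)$, one checks that the numerator has a definite sign for \emph{every} $c>0$, because $x\mapsto x^{-\alpha-1/2}K_{\alpha+3/2}(x)$ is decreasing. Your perturbative argument (the $j=-1$ term has derivative $\sim 2c$, the rest are $O(ce^{-2\pi c})$) is correct asymptotically, but to make it work down to $c=1$ you would still need explicit control of the implied constants uniformly in $\xi$; the paper's sign analysis sidesteps this entirely.
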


So far, the upper bounds on $L_{\alpha,c}$ may depend on the parameters $\alpha$ and $c$; however, the following still holds.

\begin{lemma}\label{LEM h3}
If $\alpha\in((-\infty,-1]\cup(0,\infty))\setminus\N$ and $c\geq1$, then
$|L_{\alpha,c}(x)|\leq1$ for all $x\in\R$.
\end{lemma}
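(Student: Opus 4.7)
The plan is to exploit two facts already established: that $\widehat{L_{\alpha,c}}$ is nonnegative and integrable, and that $L_{\alpha,c}(0)=1$. The first follows from the explicit formula \eqref{EQgmcft} for $\widehat{\phi_{\alpha,c}}$, which shows that $\widehat{\phi_{\alpha,c}}$ is of one sign on $\mathbb{R}\setminus\{0\}$, so that the quotient defining $\widehat{L_{\alpha,c}}$ in \eqref{EQ l1} is nonnegative; integrability is Proposition~\ref{PROPLchatintegrable} specialized to $d=1$ (or alternatively a consequence of Theorem~\ref{THM l3}). The second is the interpolatory property from Proposition~\ref{PROPLfundamental}.

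With these in hand, the Fourier inversion formula gives
\begin{equation*}
L_{\alpha,c}(x)=\frac{1}{2\pi}\int_{\mathbb{R}}\widehat{L_{\alpha,c}}(\xi)e^{ix\xi}\,d\xi,\qquad x\in\mathbb{R}.
\end{equation*}
Since $\widehat{L_{\alpha,c}}\geq 0$, we may pull the modulus inside to obtain
\begin{equation*}
|L_{\alpha,c}(x)|\leq\frac{1}{2\pi}\int_{\mathbb{R}}\widehat{L_{\alpha,c}}(\xi)\,|e^{ix\xi}|\,d\xi=\frac{1}{2\pi}\int_{\mathbb{R}}\widehat{L_{\alpha,c}}(\xi)\,d\xi.
\end{equation*}
The right-hand side is, by inversion at $x=0$, exactly $L_{\alpha,c}(0)$, which equals $1$ by the interpolatory property. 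This yields the desired bound $|L_{\alpha,c}(x)|\leq 1$ for all $x\in\mathbb{R}$.

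There is essentially no obstacle: the entire argument is a one-line consequence of nonnegativity of $\widehat{L_{\alpha,c}}$ plus normalization at the origin. The only point worth verifying carefully is that the hypotheses $\alpha\in((-\infty,-1]\cup(0,\infty))\setminus\mathbb{N}$ and $c\geq 1$ (which are inherited from the rest of Section~\ref{SECpropfundamental}) suffice to place us within the framework of \eqref{EQgmcft} and Proposition~\ref{PROPLchatintegrable}, so that both the pointwise formula and the inversion step are rigorously justified. All of these conditions are satisfied, so the bound follows.
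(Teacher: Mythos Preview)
Your proof is correct and is essentially the same as the paper's. The paper phrases the key step as ``$L_{\alpha,c}$ is positive definite, hence $|L_{\alpha,c}(x)|\leq L_{\alpha,c}(0)=1$,'' while you unpack this directly via the Fourier inversion formula; these are equivalent formulations of the same argument.
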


We end the section with a statement on the zeros of $\widehat{L_{\alpha,c}}$.

\begin{theorem}\label{THM h3}
If $\alpha\in((-\infty,-1]\cup(0,\infty))\setminus\N$ and $c\geq1$, then $\widehat{L_{\alpha,c}}(2\pi k) = \delta_{0,k}$ for every $k\in\Z$.
\end{theorem}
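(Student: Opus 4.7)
The plan is to use the definition \eqref{EQLchatdef} directly, exploiting the $2\pi$-periodicity of the denominator together with the singular behaviour of $\phica$ at the origin (where it applies). A preliminary observation is that the sum $\zsum{j}\phica(\xi+2\pi j)$ takes the same value $D:=\zsum{j}\phica(2\pi j)$ at every point of $2\pi\Z$, since shifting $\xi$ by a multiple of $2\pi$ merely reindexes the sum. Hence $\Lachat(2\pi k)=\phica(2\pi k)/D$ for every $k\in\Z$, and the problem reduces to comparing the numerator at $2\pi k$ with the full sum $D$.

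For $\alpha\in(0,\infty)\setminus\N$ I would argue as follows. Combining \eqref{EQgmcft} with the small-argument asymptotic $K_\nu(r)\sim\tfrac{\Gamma(|\nu|)}{2}(2/r)^{|\nu|}$ (valid for $\nu\neq 0$) and the identity $K_{-\nu}=K_\nu$, one finds that $\phica(\xi)\sim B_{\alpha,c}|\xi|^{-(2\alpha+1)}$ as $\xi\to 0$. The exponent is strictly negative, so $\phica$ blows up at the origin, meaning the $j=0$ term in $D$ is divergent while every other term $\phica(2\pi j)$ with $j\neq 0$ is finite. Consequently, for $k\neq 0$ the numerator $\phica(2\pi k)$ is finite while the denominator is effectively infinite, giving $\Lachat(2\pi k)=0$. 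For $k=0$ the ratio is of indeterminate form and must be read as a limit; invoking the rewriting \eqref{EQ l1}, it suffices to show $s_{\alpha,c}(\xi)=\zsumzero{j}a_j(\xi)\to 0$ as $\xi\to 0$. Each summand $a_j(\xi)=\phica(\xi+2\pi j)/\phica(\xi)$ tends pointwise to zero because its numerator stays bounded away from the origin while its denominator grows without bound, and Proposition \ref{PROP l1} supplies a uniform-in-$\xi$ geometric bound of the form $|a_j(\xi)|\lesssim e^{-2\pi c(|j|-1)}$ that dominates the sum. The dominated convergence theorem then yields $\Lachat(0)=1$.

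For $\alpha\in(-\infty,-1]\setminus\N$ I would follow the same outline, extracting the analogous behaviour of $\phica$ from the estimates set up in Section \ref{SECproofs}. In the Poisson-kernel case $\alpha=-1$ the explicit form $\phica(\xi)=\tfrac{\pi}{c}e^{-c|\xi|}$ permits a direct calculation, while for $\alpha<-1$ Lemma \ref{LEMbaxterphihatdecay} combined with the bounds underlying Theorem \ref{THM l2} and Theorem \ref{THM KPoissonFTdecay} furnishes the requisite comparison between $\phica(2\pi k)$ and $D$. The main obstacle I anticipate is precisely this second regime: since $\phica$ is no longer singular at the origin, the naive pointwise-ratio argument loses its force, and one must instead draw on a subtler identity — plausibly a Poisson summation argument leveraging the already-established interpolatory property $L_{\alpha,c}(k)=\delta_{0,k}$ from Proposition \ref{PROPLfundamental} — to pass from the spatial-side interpolation condition to the Fourier-side pointwise values on $2\pi\Z$.
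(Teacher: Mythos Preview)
For $\alpha>0$ your argument is essentially the paper's: both treat the $k=0$ case by showing $s_{\alpha,c}(\xi)\to 0$ as $\xi\to 0$ via the uniform geometric bound from Proposition~\ref{PROP l1}, and the $k\neq 0$ case by exploiting the singularity of $\phica$ at the origin. The paper formalizes the $k\neq 0$ step by writing $\widehat{L_c}(2\pi k+\xi)$ through the factorization \eqref{EQ JL3}, so that an explicit factor $|\xi|^{2\alpha+1}$ appears in the numerator and vanishes as $\xi\to 0$ while $F_\alpha(\xi)\to F_\alpha(0)\neq 0$; your ``finite numerator over infinite denominator'' reasoning is the same idea, only slightly less formally stated.

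Your instinct about the range $\alpha\in(-\infty,-1]$ is correct, and the obstacle is in fact insurmountable: the stated conclusion is false there. For these $\alpha$ the transform $\phica$ is continuous and strictly positive at the origin (by \eqref{EQ JL4}, or explicitly $\phica(\xi)=(\pi/c)e^{-c|\xi|}$ when $\alpha=-1$), so \eqref{EQLchatdef} evaluates directly to
\[
\Lachat(0)=\frac{\phica(0)}{\sum_{j\in\Z}\phica(2\pi j)}<1,\qquad \Lachat(2\pi k)=\frac{\phica(2\pi k)}{\sum_{j\in\Z}\phica(2\pi j)}>0\quad(k\neq 0).
\]
For the Poisson kernel one computes $\Lachat(0)=(1-e^{-2\pi c})/(1+e^{-2\pi c})<1$ for every finite $c$. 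Your proposed Poisson-summation rescue cannot help: it only recovers $\sum_{k}\Lachat(2\pi k)=1$, which is consistent with but does not imply the Kronecker-delta values. Note that the paper's own proof invokes only the $\alpha>0$ machinery (Proposition~\ref{PROP l1}, \eqref{EQ JL3}, Lemma~\ref{LEM JL1}) and so likewise does not cover $\alpha\leq -1$; the inclusion of $(-\infty,-1]$ in the hypothesis appears to be an oversight in the statement rather than a gap in your reasoning.
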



\section{Norms and Convergence Properties of the One-dimensional Interpolation Operator}\label{SECEastJournal}

In this section, we show that the results of Riemenschneider and Sivakumar \cite{RiemSiva} have analogues for general multiquadrics.  In Section \ref{SECpropfundamental}, the decay of $\Lachat$ is discussed, and we use that information to uncover growth conditions on data that are suitable to cardinal interpolation. Recall from Corollary \ref{COR l3} and Theorem \ref{THM h3} that for $\alpha>0$ and $c\geq1$,
\begin{equation}\label{EQHLpositivealphabound}
|L_{\alpha,c}(x)| = O\left(\min\left\{1,|x|^{-\floor{2\alpha+1}}\right\}\right),\quad x\in\R.
\end{equation}
This decay is not the best one can get for individual $\alpha$, in fact for $\alpha=1/2$, Buhmann \cite{Buhmann} proves a decay rate of $|x|^{-5}$.  According to further work by Buhmann and Micchelli \cite{buhmannmicchelli}, it appears that the multiquadrics with exponents $(2k-1)/2$ for $k\in\N$ are exceptional cases.  For these values, $\Lachat$ can be shown to have more derivatives than what we have shown for general $\alpha$ due to some special symmetry involving the Bessel functions in the Fourier transforms. Moreover, in these cases, decay of the fundamental function is given by
\begin{equation}\label{EQHLoddbound}
\left|L_{\frac{2k-1}{2},c}(x)\right| = O\left(\min\left\{1,|x|^{-4k-1}\right\}\right),\quad x\in\R.
\end{equation}

For negative exponents, the so-called inverse multiquadrics, the fundamental functions have slightly slower decay (Corollary \ref{COR l4} and Theorem \ref{THM h3}):
\begin{equation}\label{EQHLnegativealphabound}
\left|L_{\alpha,c}(x)\right|=O\left(\min\left\{1,|x|^{-\ceiling{2|\alpha|-2}}\right\}\right),\quad x\in\R.
\end{equation}

As a consequence of the decay of the fundamental functions, we have the following.

\begin{proposition}\label{PROPRSprop33}
For $\alpha\in(-\infty,-3/2)\cup[1/2,\infty)\setminus\N$, the function
\begin{equation}\label{EQLambda}
\Lambda_{\alpha,c}(x):=\zsum{j}\left|L_{\alpha,c}(x+j)\right|,\quad x\in\R\end{equation}
is a well-defined, 1-periodic, bounded, continuous function.
\end{proposition}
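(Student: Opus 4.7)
The plan is to show convergence, continuity, and periodicity essentially simultaneously by exhibiting a summable majorant for the tail of the series that is uniform on compact subsets of $\R$.

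First I would fix a compact interval, say $x\in[-M,M]$ with $M\geq1$, and split the sum according to $|j|\leq 2M$ and $|j|>2M$. The finitely many terms with $|j|\leq 2M$ pose no problem: each $L_{\alpha,c}$ is continuous (Proposition \ref{PROPLfundamental}) and bounded by $1$ (Lemma \ref{LEM h3}), so this finite sum is continuous and bounded on the interval. For the tail $|j|>2M$, I would invoke the decay estimates: Corollary \ref{COR l3} gives $|L_{\alpha,c}(y)|\leq A_{\alpha,c}|y|^{-\floor{2\alpha+1}}$ when $\alpha\in[1/2,\infty)\setminus\N$, while Corollary \ref{COR l4} gives $|L_{\alpha,c}(y)|\leq A_{\alpha,c}|y|^{-\ceiling{2|\alpha|-2}}$ when $\alpha<-3/2$. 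In both cases the exponent is at least $2$: indeed $\alpha\geq 1/2$ forces $\floor{2\alpha+1}\geq 2$, and $\alpha<-3/2$ forces $\ceiling{2|\alpha|-2}\geq 2$. This is precisely why the exponent ranges excluded from the proposition have to be discarded.

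Writing $p\in\{\floor{2\alpha+1},\ceiling{2|\alpha|-2}\}\geq 2$ for the relevant decay rate, for $|j|>2M$ and $|x|\leq M$ we have $|x+j|\geq |j|-M\geq |j|/2$, so
\begin{equation*}
|L_{\alpha,c}(x+j)|\;\leq\; A_{\alpha,c}\,\bigl(|j|/2\bigr)^{-p}\;=\;2^{p}A_{\alpha,c}\,|j|^{-p}.
\end{equation*}
Since $p\geq 2$, the series $\sum_{|j|>2M}|j|^{-p}$ converges, so the tail of $\sum_{j}|L_{\alpha,c}(x+j)|$ is dominated on $[-M,M]$ by a convergent numerical series independent of $x$. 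By the Weierstrass $M$-test the series defining $\Lambda_{\alpha,c}$ converges uniformly on $[-M,M]$. This simultaneously shows that $\Lambda_{\alpha,c}(x)$ is finite for every $x$ and that $\Lambda_{\alpha,c}$ is continuous on $\R$, being a locally uniform limit of continuous partial sums.

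For $1$-periodicity I would simply reindex: the substitution $j\mapsto j-1$ gives
\begin{equation*}
\Lambda_{\alpha,c}(x+1)\;=\;\sum_{j\in\Z}|L_{\alpha,c}(x+1+j)|\;=\;\sum_{j\in\Z}|L_{\alpha,c}(x+j)|\;=\;\Lambda_{\alpha,c}(x),
\end{equation*}
which is valid because the series is absolutely convergent. Finally, boundedness is automatic: a continuous $1$-periodic function on $\R$ attains its supremum on $[0,1]$, so $\Lambda_{\alpha,c}\in L_\infty(\R)$. The only genuinely nontrivial step is verifying that the exponent $p$ in the decay estimate is at least $2$ throughout the permitted range of $\alpha$; everything else is a routine application of the Weierstrass $M$-test followed by reindexing.
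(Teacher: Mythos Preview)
Your proof is correct and follows essentially the same approach as the paper: both arguments use the decay estimates from Corollaries \ref{COR l3} and \ref{COR l4} to majorize the tail of the series by a convergent $p$-series with $p\geq 2$. The paper's version is terser---it first invokes periodicity to reduce to $x\in[0,1]$ and then bounds the sum directly---whereas you work on an arbitrary compact interval and invoke the Weierstrass $M$-test explicitly to get continuity, but the substance is identical.
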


\begin{proof}
Periodicity is apparent, so we need only consider $x\in[0,1]$.  If $\alpha\geq1/2$, then \eqref{EQHLpositivealphabound} gives
$$\Lambda_{\alpha,c}(x) = \sum_{j=-1}^1|L_{\alpha,c}(x+j)|+\sum_{|j|\geq2}|L_{\alpha,c}(x-j)| = O\left(1+\sum_{|j|\geq2}(|j|-1)^{-\floor{2\alpha+1}}\right),$$
which yields the result since $L_{\alpha,c}$ is continuous on $\R$ and $2\alpha+1\geq2$.

On the other hand, if $\alpha<-3/2$, then
$$\Lambda_{\alpha,c}(x) = O\left(1+\sum_{|j|\geq2}(|j|-1)^{-\ceiling{2|\alpha|-2}}\right).$$
As the exponent is less than -1, the series converges, whence the result.
\end{proof}

We next define the cardinal interpolation operator acting on sequences, and show that it is well-defined for sequences that grow at a sufficiently slower rate than the decay of the fundamental function. 

\begin{theorem}\label{THMRSthm34}
If $\alpha\in[1/2,\infty)\setminus\N$, and
$$|y_j|\leq A\left(1+|j|^{\floor{2\alpha+1}-1-\varepsilon}\right),\quad j\in\Z,$$
for some fixed positive constants $\varepsilon$ and $A$, then the function
\begin{equation}\label{EQdatainterpolant}
\I_{\alpha,c}\mathbf{y}(x):=\zsum{j}y_j L_{\alpha,c}(x-j),\quad\mathbf{y}=(y_j)_{j\in\Z}
\end{equation}
is well-defined, continuous on $\R$, and satisfies
$\I_{\alpha,c}\mathbf{y}(x) = O\left(1+|x|^{\floor{2\alpha+1}-1-\varepsilon}\right),$ $|x|\to\infty$.

If $\alpha\in(-\infty,-3/2)$, and $$|y_j|\leq A\left(1+|j|^{\ceiling{2|\alpha|-2}-1-\varepsilon}\right),\quad j\in\Z,$$
then $\I_{\alpha,c}$ as defined in \eqref{EQdatainterpolant} is well-defined, continuous on $\R$, and satisfies
$\I_{\alpha,c}\mathbf{y}(x) = O\left(1+|x|^{\ceiling{2|\alpha|-2}-1-\varepsilon}\right).$

\end{theorem}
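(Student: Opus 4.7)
The plan is to reduce both cases to a single setup using \eqref{EQHLpositivealphabound} and \eqref{EQHLnegativealphabound}. Write $s:=\floor{2\alpha+1}$ when $\alpha\in[1/2,\infty)\setminus\N$ and $s:=\ceiling{2|\alpha|-2}$ when $\alpha<-3/2$. In either case $s\geq 2$, and we have the pointwise decay $|L_{\alpha,c}(x)|\leq A\min\{1,|x|^{-s}\}$ together with the data growth bound $|y_j|\leq A(1+|j|^{s-1-\varepsilon})$. The growth exponent of the data undercuts the decay exponent of $L_{\alpha,c}$ by $1+\varepsilon$, which is exactly what produces a convergent series with a suitable tail.

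First I would show that the series in \eqref{EQdatainterpolant} converges absolutely and uniformly on every compact set $K\subset\R$. Setting $R:=\sup_{x\in K}|x|$, for $|j|\geq 2R+1$ and $x\in K$ one has $|x-j|\geq |j|/2$, hence
$$|y_j L_{\alpha,c}(x-j)|\leq A(1+|j|^{s-1-\varepsilon})\cdot A(|j|/2)^{-s}\leq A'|j|^{-1-\varepsilon}.$$
The Weierstrass $M$-test then gives uniform convergence on $K$. Since $L_{\alpha,c}$ is continuous (see Proposition \ref{PROPLfundamental}), each summand is continuous, so $\I_{\alpha,c}\mathbf{y}$ is continuous on $\R$.

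For the growth estimate, fix $x\in\R$ with $|x|\geq 1$ and split
$$\I_{\alpha,c}\mathbf{y}(x)=\Sigma_1(x)+\Sigma_2(x):=\sum_{|j|\leq 2|x|}y_jL_{\alpha,c}(x-j)+\sum_{|j|>2|x|}y_jL_{\alpha,c}(x-j).$$
For $\Sigma_1$, use the crude data bound $|y_j|\leq A(1+(2|x|)^{s-1-\varepsilon})$ together with Proposition \ref{PROPRSprop33}, which yields $\sum_{j\in\Z}|L_{\alpha,c}(x-j)|\leq\|\Lambda_{\alpha,c}\|_\infty<\infty$; this gives $|\Sigma_1(x)|=O(1+|x|^{s-1-\varepsilon})$. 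For $\Sigma_2$, the per-term estimate of the preceding paragraph applies verbatim (since $|j|>2|x|$ implies $|x-j|\geq |j|/2$), so
$$|\Sigma_2(x)|\leq A'\sum_{|j|>2|x|}|j|^{-1-\varepsilon}=O(|x|^{-\varepsilon})=O(1).$$
Combining the two estimates yields $\I_{\alpha,c}\mathbf{y}(x)=O(1+|x|^{s-1-\varepsilon})$, which is exactly the claimed rate in each case.

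There is no serious obstacle: the dichotomy $|j|\lessgtr 2|x|$ is chosen precisely so that the two decay regimes for $L_{\alpha,c}$ (the trivial bound by $1$ versus the polynomial decay) separate cleanly on the two pieces. The substantive input, which handles the potentially large inner block $\Sigma_1$, is Proposition \ref{PROPRSprop33}: it collapses $\sum_{j\in\Z}|L_{\alpha,c}(x-j)|$ into a single uniform constant, so the inner block contributes only the growth of the data, not an additional factor from the size of the window.
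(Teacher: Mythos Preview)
Your argument is correct. The continuity step is essentially the same as the paper's (uniform convergence of the tail on each $[-M,M]$ via the $M$-test). For the growth estimate, however, you take a genuinely different route.

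The paper splits off only the single term $j=\nu(x)$, where $\nu(x)$ is the integer nearest to $x$, and estimates the remaining sum directly: using $|j-\nu|\leq 2|x-j|$ it bounds
\[
\sum_{j\neq\nu}\dfrac{1+|j|^{s-1-\varepsilon}}{|j-\nu|^{s}}
\]
and then (implicitly, via $|j|^{s-1-\varepsilon}\lesssim |j-\nu|^{s-1-\varepsilon}+|\nu|^{s-1-\varepsilon}$) shows this is $O(1+|\nu|^{s-1-\varepsilon})$. Proposition~\ref{PROPRSprop33} is not invoked at all.

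Your approach instead uses the coarser dichotomy $|j|\le 2|x|$ versus $|j|>2|x|$ and handles the inner block in one stroke with Proposition~\ref{PROPRSprop33}: since $\sum_{j}|L_{\alpha,c}(x-j)|=\Lambda_{\alpha,c}(x)$ is uniformly bounded, the inner block contributes only the maximal data size $O(1+|x|^{s-1-\varepsilon})$. The outer block is then dispatched exactly as in the continuity step. This is a little more modular (it reuses a result already in hand) and sidesteps the discrete-convolution estimate hidden in the paper's third displayed line; the paper's version, on the other hand, is self-contained and does not appeal to Proposition~\ref{PROPRSprop33}. Both lead to the same conclusion with comparable effort.
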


\begin{proof}
Consider the case $\alpha\in[1/2,\infty)\setminus\N$.  We first show that $\I_{\alpha,c}\mathbf{y}$ is continuous on every interval of the form $[-M,M]$, for $M\in\N$.  Let $x\in[-M,M]$.  Then
$$\I_{\alpha,c}\mathbf{y}(x) = \sum_{|j|\leq2M}y_jL_{\alpha,c}(x-j)+\sum_{|j|>2M}y_jL_{\alpha,c}(x-j).$$
The first term on the right hand side is a finite sum of continuous functions, and so it suffices to show that the second sum converges uniformly on $[-M,M]$.  Indeed, the decays of $|y_j|$ and $L_{\alpha,c}$ yield the estimate
$$\sum_{|j|>2M}|y_jL_{\alpha,c}(x-j)|\leq A\sum_{|j|>2M}\dfrac{1+|j|^{\floor{2\alpha+1}-1-\varepsilon}}{|j|^{\floor{2\alpha+1}}},$$
which converges because $\varepsilon>0$ and ${2\alpha+1}\geq2$.

Now to consider the decay of $\I_{\alpha,c}$, let $|x|\geq1$ be fixed.  Then let $\nu:=\nu(x)$ be the unique integer such that $\nu(x)-1/2\leq x<\nu(x)+1/2$.  Since $\nu\neq0$, $|\nu|<2|x|$, and $|k-\nu|\leq2|x-k|$ for every $k\neq\nu$, we have
\begin{displaymath}
\begin{array}{lll}
|\I_{\alpha,c}(x)| & = & \left|y_\nu L_{\alpha,c}(x-\nu)+\dsum_{j\neq\nu}y_jL_{\alpha,c}(x-j)\right|\\
\\
& = & O\left(1+|\nu|^{\floor{2\alpha+1}-1-\varepsilon}+\dsum_{j\neq\nu}\dfrac{1+|j|^{\floor{2\alpha+1}-1-\varepsilon}}{|j-\nu|^{\floor{2\alpha+1}}}\right)\\
\\
& = & O\left(1+|\nu|^{\floor{2\alpha+1}-1-\varepsilon}\left(1+\zsumnzero{j}|j|^{-\floor{2\alpha+1}}\right)\right)\\
\\
& = & O\left(1+|x|^{\floor{2\alpha+1}-1-\varepsilon}\right).\\
\end{array}
\end{displaymath}

The proof for negative $\alpha$ values follows similar reasoning.
\end{proof}

Now we explore the properties of the cardinal interpolation operator $\I_{\alpha,c}$ acting on traditional sequence spaces; we first show boundedness, and follow with estimates on its norm.

\begin{theorem}\label{THMRSthm35}
For a fixed $\alpha\in(-\infty,-3/2)\cup[1/2,\infty)\setminus\N$ and $c>0$, the cardinal interpolation operator $\I_{\alpha,c}$ is a bounded linear operator from $\ell_p$ to $L_p$ for $1\leq p\leq\infty$.
\end{theorem}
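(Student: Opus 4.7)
The plan is to split into the three classical cases $p=\infty$, $p=1$, and $1<p<\infty$, leveraging the pointwise decay of $L_{\alpha,c}$ established in Corollaries \ref{COR l3} and \ref{COR l4} together with the bounded periodization $\Lambda_{\alpha,c}$ from Proposition \ref{PROPRSprop33}. Before anything else, I would record two ingredients. First, set $C_{\alpha,c} := \|\Lambda_{\alpha,c}\|_{L_\infty}$, which is finite by Proposition \ref{PROPRSprop33}. Second, note that $L_{\alpha,c}\in L_1(\R)$: for $\alpha\in[1/2,\infty)\setminus\N$, Corollary \ref{COR l3} gives $|L_{\alpha,c}(x)|=O(|x|^{-\lfloor 2\alpha+1\rfloor})$ with exponent at least $2$; for $\alpha<-3/2$, Corollary \ref{COR l4} gives $|L_{\alpha,c}(x)|=O(|x|^{-\lceil 2|\alpha|-2\rceil})$ with exponent again at least $2$. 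Combined with Lemma \ref{LEM h3}, integrability follows immediately.

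For $p=\infty$, one has pointwise $|\I_{\alpha,c}\mathbf{y}(x)| \leq \|\mathbf{y}\|_{\ell_\infty}\sum_j |L_{\alpha,c}(x-j)| \leq C_{\alpha,c}\|\mathbf{y}\|_{\ell_\infty}$, so $\|\I_{\alpha,c}\mathbf{y}\|_{L_\infty}\leq C_{\alpha,c}\|\mathbf{y}\|_{\ell_\infty}$. For $p=1$, a direct application of Tonelli's theorem together with translation invariance of Lebesgue measure gives
\begin{equation*}
\|\I_{\alpha,c}\mathbf{y}\|_{L_1} \leq \sum_j |y_j| \int_\R |L_{\alpha,c}(x-j)|\,dx = \|L_{\alpha,c}\|_{L_1}\|\mathbf{y}\|_{\ell_1}.
\end{equation*}

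For the main case $1<p<\infty$, I would use the now-standard Hölder splitting trick. Writing $|L_{\alpha,c}(x-j)| = |L_{\alpha,c}(x-j)|^{1/p}\cdot|L_{\alpha,c}(x-j)|^{1/q}$ where $1/p+1/q=1$, Hölder's inequality on the sum in $j$ yields
\begin{equation*}
|\I_{\alpha,c}\mathbf{y}(x)|^p \leq \left(\sum_j |y_j|^p|L_{\alpha,c}(x-j)|\right)\left(\sum_j |L_{\alpha,c}(x-j)|\right)^{p/q} \leq C_{\alpha,c}^{p/q}\sum_j |y_j|^p|L_{\alpha,c}(x-j)|.
\end{equation*}
Integrating in $x$ and applying Tonelli gives $\|\I_{\alpha,c}\mathbf{y}\|_{L_p}^p \leq C_{\alpha,c}^{p/q}\|L_{\alpha,c}\|_{L_1}\|\mathbf{y}\|_{\ell_p}^p$, completing the estimate. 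The absolute convergence established along the way also justifies the interchange of integral and sum in the $p=1$ case and, combined with Theorem \ref{THMRSthm34}, confirms that $\I_{\alpha,c}\mathbf{y}$ is a well-defined function for every $\mathbf{y}\in\ell_p$.

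No single step is a genuine obstacle here; the only substantive input is the uniform summability of $|L_{\alpha,c}(x-j)|$ over $j$ uniformly in $x$, which is precisely Proposition \ref{PROPRSprop33}. The restriction on $\alpha$ in the statement is inherited from that proposition (and from needing $\lfloor 2\alpha+1\rfloor$ or $\lceil 2|\alpha|-2\rceil$ strictly greater than $1$), so no further case analysis on $\alpha$ is required. An alternative route would be Riesz--Thorin interpolation between the $p=1$ and $p=\infty$ endpoints, but the Hölder argument gives explicit bounds with essentially no extra work.
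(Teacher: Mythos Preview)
Your proof is correct. The endpoint cases $p=1$ and $p=\infty$ are handled essentially as in the paper (the paper phrases the $p=1$ bound as a consequence of Proposition~\ref{PROPRSprop33} rather than of $L_{\alpha,c}\in L_1$, but these are equivalent since $\int_\R|L_{\alpha,c}|=\int_0^1\Lambda_{\alpha,c}$).

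For $1<p<\infty$ you take a genuinely different and more elementary route. The paper, following \cite{MRR}, splits off the ``diagonal'' term $y_{\nu(x)}L_{\alpha,c}(x-\nu(x))$ with $\nu(x)$ the nearest integer to $x$, bounds it using $|L_{\alpha,c}|\le 1$, and then controls the off-diagonal remainder by recognizing it as a discrete convolution $|\mathbf{y}|\ast\mathbf{b}$ with $b_j\asymp |j|^{-\lfloor 2\alpha+1\rfloor}$ (resp.\ $|j|^{-\lceil 2|\alpha|-2\rceil}$) and invoking a rearrangement inequality from \cite{BennettSharpley}. Your H\"older splitting $|L|=|L|^{1/p}|L|^{1/q}$ is the Schur-test argument: it needs only the two pieces of information $\sup_x\sum_j|L_{\alpha,c}(x-j)|<\infty$ and $\|L_{\alpha,c}\|_{L_1}<\infty$, avoids any external convolution inequality, and yields the explicit constant $C_{\alpha,c}^{1/q}\|L_{\alpha,c}\|_{L_1}^{1/p}$. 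The paper's approach, by contrast, isolates the mechanism (discrete convolution against a decaying kernel) that later reappears in the proof of Theorem~\ref{THMRSthm36}, where the mixed Hilbert transform enters; so while your argument is cleaner for this theorem in isolation, the paper's decomposition is chosen with an eye toward the uniform-in-$c$ estimates that follow.
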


\begin{proof}
The proof is the same as in \cite{RiemSiva}, which follows the techniques of \cite{MRR}.  Linearity is evident, and the cases $p=\infty$ and $p=1$ follow from Proposition \ref{PROPRSprop33} and Lemma \ref{LEM h3}, respectively.  Therefore, let $1<p<\infty$, and $x\in\R$ be fixed.  As before, let $\nu(x)$ be the unique integer such that $\nu(x)-1/2\leq x<\nu(x)+1/2$.  If $\mathbf{y}=(y_j)_{j\in\Z}\in\ell_p$, then Theorem \ref{THMRSthm34} implies that $\I_{\alpha,c}\mathbf{y}$ is continuous, and we write
$$\I_{\alpha,c}\mathbf{y}(x) = y_{\nu(x)}L_{\alpha,c}(x-\nu(x))+\dsum_{j\neq\nu}y_jL_{\alpha,c}(x-j).$$
To estimate the $L_p$ norm of the first term, notice that since $|L_{\alpha,c}(x)|\leq1$,
\begin{align*}\dint_\R\left|y_{\nu(x)}L_{\alpha,c}(x-\nu(x))\right|^pdx = \zsum{\ell}\dint_{\ell-\frac{1}{2}}^{\ell+\frac{1}{2}}\left|y_{\nu(x)}L_{\alpha,c}(x-\nu(x))\right|^pdx &\leq\zsum{\ell}|y_\ell|^p\\ &= \|y\|_{\ell_p}^p.\end{align*}

For the second term, first assume that $\alpha\in[1/2,\infty)\setminus\N$.  Then
\begin{displaymath}
\begin{array}{lll}
\dint_\R\left|\dsum_{j\neq\nu}y_jL_{\alpha,c}(x-j)\right|^pdx & = & \zsum{\nu}\dint_{\nu-\frac{1}{2}}^{\nu+\frac{1}{2}}\left|\dsum_{j\neq\nu}y_jL_{\alpha,c}(x-j)\right|^pdx\\
\\
& \leq & \zsum{\nu}\dint_{\nu-\frac{1}{2}}^{\nu+\frac{1}{2}}\left(\dsum_{j\neq\nu}|y_j||L_{\alpha,c}(x-j)|\right)^pdx\\
\\
& = & O\left(\zsum{\nu}\left(\dsum_{j\neq\nu}\dfrac{|y_j|}{|\nu-j|^{\floor{2\alpha+1}}}\right)^p\right).\\
\end{array}
\end{displaymath}
The quantity in the final line above corresponds to the $\ell_p$ norm of the discrete convolution $|\mathbf{y}|\ast\mathbf{b}$, where $|\mathbf{y}|=\left(|y_j|\right)_{j\in\Z}$, and the entries of $\mathbf{b}$ are given by $(1-\delta_{0,j})|j|^{-\floor{2\alpha+1}}$.  From \cite[p. 259, Theorem 7.6]{BennettSharpley}, we find that
\begin{equation}\label{EQdiscreteconvolution}
\||\mathbf{y}|\ast\mathbf{b}\|_{\ell_p}\leq\|\mathbf{y}\|_{\ell_p}\left[\underset{n\in\N}{\sup}\;n\;b^\sharp(n)\right],
\end{equation}
where $(b^\sharp(n))_{n\in\N}$ is a non-increasing rearrangement of $\mathbf{b}$.  It is easily checked that the supremum on the right hand side of \eqref{EQdiscreteconvolution} is finite, and therefore the second term estimated above is $O(\|y\|^p_{\ell_p})$, and the theorem follows.

The proof for the case $\alpha<-3/2$ is much the same, except that the power on the elements of $\mathbf{b}$ will be $-\ceiling{2|\alpha|-2}$, which nevertheless results in the supremum of $n\;b^\sharp(n)$ being finite.
\end{proof}

\begin{theorem}\label{THMRSthm36}
Let $\alpha\in(-\infty,-3/2)\cup[1/2,\infty)\setminus\N$ and $1<p<\infty$ be fixed.  Then 
$$\underset{c\geq1}\sup\;\|\I_{\alpha,c}\|_{\ell_p\to L_p}<\infty.$$
\end{theorem}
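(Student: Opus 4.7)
The plan is to decompose $\I_{\alpha,c}\mathbf{y}$ in the Fourier domain into a sum of modulated bandlimited pieces and bound each piece via the $L_p$-boundedness of Fourier multipliers of bounded variation. The crucial uniform-in-$c$ input is the bound $\|\widehat{L_{\alpha,c}}'\|_{L_1(\R)}\leq A_\alpha$ supplied by Theorem \ref{THM l3}.

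Given $\mathbf{y}\in\ell_p$ (first assumed finitely supported; the general case follows by density using Theorem \ref{THMRSthm35}), let $g(x)=\sum_j y_j\sinc(x-j)$ denote the classical sinc interpolant, which is bandlimited to $[-\pi,\pi]$ with $\widehat{g}(\eta)=Y(\eta)\chi_{[-\pi,\pi]}(\eta)$ where $Y(\xi):=\sum_j y_j e^{-ij\xi}$; the Plancherel--P\'olya theorem gives $\|g\|_{L_p(\R)}\leq C_p\|y\|_{\ell_p}$ for $1<p<\infty$. Using the $2\pi$-periodicity of $Y$ and splitting the Fourier inversion integral for $\I_{\alpha,c}\mathbf{y}$ over the fundamental intervals $I_k:=[(2k-1)\pi,(2k+1)\pi]$, one obtains
$$\I_{\alpha,c}\mathbf{y}(x)=\sum_{k\in\Z}e^{2\pi ikx}G_k(x),\qquad \widehat{G_k}(\eta)=\widehat{g}(\eta)\,\widehat{L_{\alpha,c}}(\eta+2\pi k),$$
where each $G_k$ is bandlimited to $[-\pi,\pi]$. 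Thus $G_k$ is the image of $g$ under Fourier multiplication by $m_k(\eta):=\widehat{L_{\alpha,c}}(\eta+2\pi k)\chi_{[-\pi,\pi]}(\eta)$, extended by zero off $[-\pi,\pi]$.

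Since $m_k$ has bounded total variation on $\R$—with jumps at $\pm\pi$ and interior variation controlled by $|\widehat{L_{\alpha,c}}'|$—the classical Marcinkiewicz (BV) multiplier theorem for $L_p(\R)$, $1<p<\infty$, a consequence of the $L_p$-boundedness of the Hilbert transform, yields $\|G_k\|_{L_p(\R)}\leq C_p\|m_k\|_{BV(\R)}\|g\|_{L_p(\R)}$ with $C_p$ depending only on $p$, and a direct calculation gives
$$\|m_k\|_{BV(\R)}\leq |\widehat{L_{\alpha,c}}((2k-1)\pi)|+|\widehat{L_{\alpha,c}}((2k+1)\pi)|+\int_{I_k}|\widehat{L_{\alpha,c}}'(\xi)|\,d\xi.$$
Summed over $k$, the integral contributions collapse into $\|\widehat{L_{\alpha,c}}'\|_{L_1(\R)}\leq A_\alpha$, uniformly in $c\geq 1$. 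The main obstacle is to bound the boundary sum $\sum_k|\widehat{L_{\alpha,c}}((2k\pm 1)\pi)|$ uniformly in $c\geq 1$; this is accomplished by inserting the pointwise estimates of Proposition \ref{PROP l2} (for $\alpha>0$) and the analogous bounds from Theorem \ref{THM l2} (for $\alpha<-1$) at the endpoints $(2k\pm 1)\pi$, observing that the polynomial-in-$c$ prefactors appearing there are dominated by the exponential factors $e^{-\pi c\varepsilon}$ and $e^{-2\pi c(|k|-1)}$ whenever $c\geq 1$, yielding a geometric tail bounded independently of $c$. Combining the triangle inequality $\|\I_{\alpha,c}\mathbf{y}\|_{L_p(\R)}\leq\sum_k\|G_k\|_{L_p(\R)}$ with these estimates gives $\|\I_{\alpha,c}\mathbf{y}\|_{L_p(\R)}\leq C_{\alpha,p}\|y\|_{\ell_p}$ with $C_{\alpha,p}$ independent of $c\geq 1$.
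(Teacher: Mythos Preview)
Your approach is correct and genuinely different from the paper's. The paper argues by duality: pairing $\I_{\alpha,c}\mathbf{y}$ against $g\in L_{p'}$, it splits off a diagonal piece $y_{\nu(x)}L_{\alpha,c}(x-\nu(x))$ (controlled via $|L_{\alpha,c}|\leq 1$, Lemma \ref{LEM h3}) and handles the off-diagonal sum by writing $L_{\alpha,c}$ through Fourier inversion, integrating by parts once to produce $\widehat{L_{\alpha,c}}'$, and recognizing the remaining kernel $\sum_{j\neq\nu(x)}y_j/(x-j)$ as the \emph{mixed Hilbert transform} of \cite{MRR}, whose $\ell_p\to L_p$ bound (constant depending only on $p$) together with Theorem \ref{THM l3} closes the argument. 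Your route instead factors through the sinc interpolant $g=\mathcal{W}\mathbf{y}$ and realizes each modulated block $G_k$ as a BV Fourier multiplier applied to $g$; the uniform bound again rests on Theorem \ref{THM l3}, now supplemented by control of the jump values $\widehat{L_{\alpha,c}}((2k\pm1)\pi)$. Both arguments are ultimately reductions to the $L_p$-boundedness of the Hilbert transform. The paper's version is slightly leaner in that it needs only $\|\widehat{L_{\alpha,c}}'\|_{L_1}$ and the spatial bound $|L_{\alpha,c}|\leq 1$, whereas yours requires the extra (easy) verification that $\sum_{m\;\text{odd}}|\widehat{L_{\alpha,c}}(m\pi)|$ is bounded in $c$.

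Two small points. First, at the endpoints $\xi=\pm\pi$ themselves, Proposition \ref{PROP l2} with any $\varepsilon>0$ does not cover them, and at $\varepsilon=0$ the exponential factor is $1$; these two values should be handled by the trivial bound $0\leq\widehat{L_{\alpha,c}}\leq 1$ rather than by Proposition \ref{PROP l2}. Second, for $\alpha<-1$ what you need is the pointwise analogue of Proposition \ref{PROP l2} (alluded to in the paper just before Theorem \ref{THM l2}), not the $L_1$ statement of Theorem \ref{THM l2} that you cite.
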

\begin{proof}
Consider a fixed $1<p<\infty$, and $p'$ such that $\frac{1}{p}+\frac{1}{p'}=1$.  Let $y\in\ell_p$, $g\in L_{p'}$, and $\nu(x)$ be the unique integer such that $\nu(x)-1/2\leq x<\nu(x)+1/2$.  Then
\begin{displaymath}
\begin{array}{lll}
\left|\dint_\R\I_{\alpha,c}y(x)g(x)dx\right| & = & \left|\dint_\R\zsum{j}y_jL_{\alpha,c}(x-j)g(x)dx\right|\\
\\
& \leq & \left|\dint_\R y_{\nu(x)}L_{\alpha,c}(x-\nu(x))g(x)dx\right| \\\\
& & \hfill+\left|\dint_\R\underset{j\neq\nu}\dsum y_jL_{\alpha,c}(x-j)g(x)dx\right|\\
\\
& =: & I_1+I_2.\\
\end{array}
\end{displaymath}
By H\"{o}lder's Inequality and the fact that $|L_{\alpha,c}(x)|\leq1$, $$I_1\leq\|y\|_{\ell_p}\|g\|_{L_{p'}}.$$
To estimate $I_2$, we represent $L_{\alpha,c}$ by its Fourier integral (see Proposition \ref{PROPLfundamental}) and integrate by parts.  Indeed,
\begin{displaymath}
\begin{array}{lll}
I_2 & = & \dpiifrac\left|\dint_\R\underset{j\neq\nu}\dsum y_j\dint_\R\Lachat(\xi)e^{i(x-j)\xi}d\xi g(x)dx\right|\\
\\
& = & \dpiifrac\left|\dint_\R\underset{j\neq\nu}\dsum\dint_\R\dfrac{y_j}{x-j}e^{i(x-j)\xi}\Lachat'(\xi)g(x)d\xi dx\right|\\
\\
& = & \dpiifrac\left|\dint_\R\Lachat'(\xi)\dint_\R\underset{j\neq\nu}\dsum\dfrac{y_j}{x-j}e^{-ij\xi}g(x)e^{ix\xi}dxd\xi\right|,\\
\end{array}
\end{displaymath}
where the final step follows from Fubini's Theorem.

From \cite[Proposition 1.3]{MRR}, the \textit{mixed Hilbert transform} defined by
\begin{equation}\label{EQmixedHilberttransform}
\mathscr{H}\mathbf{y}(x):=\underset{j\neq\nu(x)}\dsum\dfrac{y_j}{x-j}
\end{equation}
is a bounded linear operator from $\ell_p$ to $L_p$, and $\|\mathscr{H}\|_{\ell_p\to L_p}\leq A_p$, where $A_p$ is a constant depending only on $p$.  Consequently, 
\begin{displaymath}
\begin{array}{lll}
I_2 & = & \dpiifrac\left|\dint_\R\Lachat'(\xi)\dint_\R\mathscr{H}\left(ye^{-i(\cdot)\xi}\right)(x)g(x)e^{ix\xi}dxd\xi\right|\\
\\
& \leq & \dpiifrac\|\Lachat'\|_{L_1}\|\mathscr{H}\|_{\ell_p\to L_p}\|y\|_{\ell_p}\|g\|_{L_{p'}} \\
\\
& \leq & A_{\alpha,p}\|y\|_{\ell_p}\|g\|_{L_{p'}}.\\
\end{array}
\end{displaymath}
The final inequality comes from boundedness of the mixed Hilbert transform and Theorem \ref{THM l3}.  The conclusion of the theorem follows from the estimates on $I_1$ and $I_2$.
\end{proof}

We now estimate the norms in the cases $p=1$ and $p=\infty$. 

\begin{proposition}\label{PROPRS38}

Suppose $\alpha\in(-\infty,-3/2)\cup[1/2,\infty)\setminus\N$ and $c\geq1$ are fixed. The following hold:

(i) Let $\Lambda_{\alpha,c}$ be defined via \eqref{EQLambda}.  Then
$$\Lambda_{\alpha,c}(x)\leq A_\alpha\ln(c),\quad x\in\R,$$
where $A_\alpha>0$ is a constant.

(ii) $\|\I_{\alpha,c}\|_{\ell_\infty\to L_\infty}\leq A_\alpha\ln(c).$

(iii) $\|\I_{\alpha,c}\|_{\ell_1\to L_1}\leq A_\alpha\ln(c).$
\end{proposition}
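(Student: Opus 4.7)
The plan is to prove (i) first, after which (ii) and (iii) follow quickly. The crux of (i) is that $L_{\alpha,c}$ obeys two complementary pointwise estimates: a uniform-in-$c$ bound of order $|x|^{-1}$, and a sharper bound of order $c^{\gamma}|x|^{-2}$ in which the improved decay must be paid for with a factor of $c$. The first comes from integrating by parts once in the Fourier inversion formula \eqref{EQHLcdef} and applying Theorem \ref{THM l3}: for every $c\geq 1$ and $x\neq 0$,
$$|x|\,|L_{\alpha,c}(x)| \leq \frac{1}{2\pi}\|\widehat{L_{\alpha,c}}'\|_{L_1(\R)} \leq A_\alpha,$$
which together with Lemma \ref{LEM h3} gives $|L_{\alpha,c}(x)|\leq \min\{1,\,A_\alpha/|x|\}$. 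The second comes from integrating by parts twice and invoking Theorem \ref{THM l1} when $\alpha\in[1/2,\infty)\setminus\N$ (permissible since $2\leq 2\alpha+1$) or Theorem \ref{THM l2} when $\alpha<-3/2$ (permissible since $2<2|\alpha|-1$), yielding $|L_{\alpha,c}(x)|\leq A_\alpha c^{\gamma}|x|^{-2}$ for some constant $\gamma=\gamma(\alpha)>0$.

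With these two bounds in hand, I would fix $x\in\R$, set $R:=c^{\gamma}$, and split
$$\Lambda_{\alpha,c}(x) \;=\; \sum_{|x+j|\leq R}|L_{\alpha,c}(x+j)|\;+\;\sum_{|x+j|>R}|L_{\alpha,c}(x+j)|.$$
For each $n\in\Z$ there is precisely one $j\in\Z$ with $x+j\in[n-\tfrac12,n+\tfrac12]$, so using $|L_{\alpha,c}|\leq 1$ on the single $j$ closest to $-x$ and $|L_{\alpha,c}(x+j)|\leq A_\alpha/|x+j|$ on the remaining integers bounds the near sum by
$$1 + 2A_\alpha\sum_{n=1}^{\lceil R\rceil}\frac{1}{n-1/2} \;=\; O(A_\alpha \ln c).$$
For the tail, the $c$-dependent quadratic decay gives
$$\sum_{|x+j|>R}|L_{\alpha,c}(x+j)| \;\leq\; A_\alpha c^{\gamma}\sum_{|x+j|>R}|x+j|^{-2} \;\leq\; \frac{A_\alpha c^{\gamma}}{R} \;=\; O(1),$$
the choice $R=c^{\gamma}$ being exactly what balances the $c$-growth against the tail. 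Summing the two pieces yields $\Lambda_{\alpha,c}(x)\leq A_\alpha \ln c$ uniformly in $x$, which is (i).

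Part (ii) is then immediate from $|\I_{\alpha,c}\mathbf{y}(x)|\leq\|\mathbf{y}\|_{\ell_\infty}\Lambda_{\alpha,c}(-x)$. For (iii), the $1$-periodicity of $\Lambda_{\alpha,c}$ established in Proposition \ref{PROPRSprop33} gives
$$\|L_{\alpha,c}\|_{L_1(\R)} \;=\; \sum_{j\in\Z}\int_{j-1/2}^{j+1/2}|L_{\alpha,c}(x)|\,dx \;=\; \int_{-1/2}^{1/2}\Lambda_{\alpha,c}(x)\,dx \;\leq\; A_\alpha \ln c,$$
after which Fubini--Tonelli and translation invariance of the $L_1$-norm yield $\|\I_{\alpha,c}\mathbf{y}\|_{L_1}\leq\|\mathbf{y}\|_{\ell_1}\|L_{\alpha,c}\|_{L_1}\leq A_\alpha \ln c\cdot\|\mathbf{y}\|_{\ell_1}$. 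The main obstacle is (i): using only the uniform $|x|^{-1}$ bound would produce a divergent sum, while relying solely on the $c$-dependent $|x|^{-2}$ bound would give a power of $c$ rather than a logarithm, so the delicate step is choosing the threshold $R=c^{\gamma}$ to balance the two regimes.
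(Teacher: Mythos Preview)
Your proposal is correct and follows essentially the same approach as the paper: combine the three pointwise bounds $|L_{\alpha,c}(x)|\leq\min\{1,\,A_\alpha/|x|,\,A_\alpha c^{\gamma}/|x|^2\}$ (from Lemma \ref{LEM h3}, Theorem \ref{THM l3}, and Theorems \ref{THM l1}/\ref{THM l2}), split the sum at the threshold $R\sim c^{\gamma}$, and deduce (ii), (iii) from (i). The paper's version differs only cosmetically---it first reduces to $x\in[0,1]$ by periodicity and splits by $|j|$ rather than $|x+j|$, and writes the explicit exponent $\gamma=5\alpha+2$ in the positive case---but the mechanism is identical.
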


\begin{proof}

We supply the proof for the case $\alpha\in[1/2,\infty)$, the estimates for the negative range of $\alpha$ being wholly similar. 

(i) By Theorems \ref{THM l1} and \ref{THM l3}, 
\begin{equation}\label{EQ Hamm Prop45bound}
|L_{\alpha,c}(x)|\leq A_\alpha\min\left\{1,\dfrac{1}{|x|},\dfrac{c^{5\alpha+2}}{|x|^2}\right\},\quad x\in\R,\quad c\geq1.
\end{equation}
Since $\Lambda_{\alpha,c}$ is 1-periodic, it suffices to check the inequality for $x\in[0,1]$.  Let $c\geq1$ and $N:=\ceiling{c^{5\alpha+2}}$.  Then by \eqref{EQ Hamm Prop45bound}
\begin{align*}
|\Lambda_{\alpha,c}(x)| &\leq  \zsum{j}|L_{\alpha,c}(x+j)|\\
&\leq  A_\alpha\left[1+\sum_{2\leq|j|\leq N} |x+j|^{-1}+\sum_{|j|>N}c^{5\alpha+2}|x+j|^{-2}\right]\\
&\leq  A_\alpha\left[1+\sum_{1\leq|j|\leq N-1}|j|^{-1}+\sum_{|j|\geq N}c^{5\alpha+2}|j|^{-2}\right]\\
&\leq  A_\alpha\left[1+\ln(N)+c^{5\alpha+2}N^{-1}\right],\\
\end{align*}
whence the result.

(ii) Simply note that if $\mathbf{y}\in\ell_\infty$, then $|\I_{\alpha,c}\mathbf{y}(x)|\leq\|\mathbf{y}\|_{\ell_\infty}\Lambda_{\alpha,c}(x)$, $x\in\R$, and apply (i).

(iii) Similarly, if $\mathbf{y}\in\ell_1$, then
$$\dint_\R\bigg|\zsum{j}y_jL_{\alpha,c}(x-j)\bigg|dx\leq\|\mathbf{y}\|_{\ell_1}\dint_0^1\Lambda_{\alpha,c}(x)dx,$$ whereby (i) provides the desired bound.

\end{proof}

We make note that the case $p=2$ provides an interesting special case.

\begin{theorem}\label{THMRSthm311}
For any $\alpha\in(-\infty,-3/2)\cup[1/2,\infty)\setminus\N$ and $c\geq1$,
$$\|\I_{\alpha,c}\|_{\ell_2\to L_2} = 1.$$
\end{theorem}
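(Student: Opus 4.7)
The plan is to use Fourier analysis to convert the operator norm into a supremum over the periodized squared modulus of $\widehat{L_{\alpha,c}}$, and then exploit the value of $\widehat{L_{\alpha,c}}$ at the lattice points $2\pi\Z$ supplied by Theorem \ref{THM h3}. Concretely, for $\mathbf{y}=(y_j)_{j\in\Z}\in\ell_2$, put $Y(\xi):=\sum_{j\in\Z}y_je^{-ij\xi}$, which lies in $L_2([-\pi,\pi])$ with $\|Y\|_{L_2[-\pi,\pi]}^2=2\pi\|\mathbf{y}\|_{\ell_2}^2$ by Parseval's Identity. Formula \eqref{EQinterpolantdef} gives $\widehat{\I_{\alpha,c}\mathbf{y}} = \widehat{L_{\alpha,c}}\,Y$, so Plancherel plus the standard periodization over the shifted intervals $[-\pi,\pi]+2\pi k$ yields
\[
\|\I_{\alpha,c}\mathbf{y}\|_{L_2}^2 \;=\; \dfrac{1}{2\pi}\dint_{-\pi}^\pi |Y(\xi)|^2 S(\xi)\,d\xi,\qquad S(\xi)\;:=\;\sum_{k\in\Z}|\widehat{L_{\alpha,c}}(\xi+2\pi k)|^2.
\]

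From this identity the upper bound $\|\I_{\alpha,c}\|_{\ell_2\to L_2}\leq 1$ is immediate: estimate \eqref{EQ Hamm Lsquared}, already proved in Section \ref{SECbaxteranalogue}, gives $S(\xi)\leq 1$ pointwise, whence $\|\I_{\alpha,c}\mathbf{y}\|_{L_2}^2\leq \frac{1}{2\pi}\|Y\|_{L_2[-\pi,\pi]}^2 = \|\mathbf{y}\|_{\ell_2}^2$.

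For the matching lower bound, I would concentrate $|Y|^2$ near $\xi=0$, where Theorem \ref{THM h3} forces $\widehat{L_{\alpha,c}}(0)=1$ and $\widehat{L_{\alpha,c}}(2\pi k)=0$ for $k\neq 0$, so that $S(0)=1$. Explicitly, for $\delta\in(0,\pi)$ set $Y_\delta(\xi)=(2\delta)^{-1/2}\mathbbm{1}_{[-\delta,\delta]}(\xi)$ on $[-\pi,\pi]$; then its Fourier coefficients $\mathbf{y}^{(\delta)}$ are in $\ell_2$ with $\|\mathbf{y}^{(\delta)}\|_{\ell_2}^2=(2\pi)^{-1}$, and the identity above reduces the Rayleigh ratio to
\[
\dfrac{\|\I_{\alpha,c}\mathbf{y}^{(\delta)}\|_{L_2}^2}{\|\mathbf{y}^{(\delta)}\|_{\ell_2}^2}\;=\;\dfrac{1}{2\delta}\dint_{-\delta}^\delta S(\xi)\,d\xi.
\]
The plan is then to let $\delta\to 0$ and conclude the ratio tends to $S(0)=1$, matching the upper bound.

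The main obstacle is justifying this last passage to the limit, i.e.\ showing $S$ is continuous at $\xi=0$ (or at least that $0$ is a Lebesgue point of $S$). Each summand $|\widehat{L_{\alpha,c}}(\xi+2\pi k)|^2$ is continuous, so this reduces to uniform convergence of the series in a neighborhood of $0$. For the tail ($|k|$ large) this is supplied by the pointwise decay estimates of Proposition \ref{PROP l2}, which give summable bounds on $|\widehat{L_{\alpha,c}}(\xi+2\pi k)|^2$ uniformly for $\xi$ near $0$; the finitely many remaining terms are individually continuous. Hence $S$ is continuous at $0$, the averaged integral tends to $S(0)=1$, and the theorem follows. (If the continuity verification were to prove stubborn, an alternative would be to replace $\mathbbm{1}_{[-\delta,\delta]}$ by a Fej\'er-like kernel and invoke the Lebesgue differentiation theorem, since $S\in L_1([-\pi,\pi])$ automatically.)
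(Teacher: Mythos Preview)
Your argument is correct and follows essentially the same route as the paper: both compute $\|\I_{\alpha,c}\mathbf{y}\|_{L_2}^2$ via Plancherel and periodization in terms of $S(\xi)=\sum_{k}|\widehat{L_{\alpha,c}}(\xi+2\pi k)|^2$, invoke \eqref{EQ Hamm Lsquared} for $S\leq 1$, and use Theorem~\ref{THM h3} to see $S(0)=1$. The only difference is that the paper simply asserts $\|\I_{\alpha,c}\|_{\ell_2\to L_2}^2=\max_{\xi\in[-\pi,\pi]}S(\xi)$ and notes the maximum is attained at $0$, whereas you make the lower bound explicit with concentrating test functions and a continuity argument for $S$; your justification via the tail estimates (Proposition~\ref{PROP l2} and its negative-$\alpha$ analogues underlying Theorem~\ref{THM l2}) is exactly what is implicitly used in the proof of Lemma~\ref{LEMRSlem42}.
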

\begin{proof}
First, note that by Plancherel's Identity and a standard periodization argument,
\begin{multline}
\|\I_{\alpha,c}\mathbf{y}\|_{L_2}^2 = \frac{1}{2\pi}\|\widehat{\I_{\alpha,c}\mathbf{y}}\|_{L_2}^2 = \frac{1}{2\pi}\dint_\R\left|\zsum{j}y_je^{-ij\xi}\right|^2|\widehat{L_{\alpha,c}}(\xi)|^2d\xi \\= \frac{1}{2\pi}\dint_{-\pi}^\pi\left|\zsum{j}y_je^{-ij\xi}\right|^2\zsum{k}\left(\widehat{L_{\alpha,c}}(\xi+2\pi k)\right)^2d\xi.
\end{multline}
Consequently, $$\|\I_{\alpha,c}\mathbf{y}\|_{L_2}^2\leq\underset{\xi\in[-\pi,\pi]}\max\zsum{k}\left(\widehat{L_{\alpha,c}}(\xi+2\pi k)\right)^2.$$
Taking the supremum over $\mathbf{y}$ in the unit ball of $\ell_2$ yields
$$\|\I_{\alpha,c}\|_{\ell_2\to L_2} = \underset{\xi\in[-\pi,\pi]}\max\zsum{k}\left(\widehat{L_{\alpha,c}}(\xi+2\pi k)\right)^2.$$
That the maximum on the right hand side is at most 1 is the content of \eqref{EQ Hamm Lsquared}, but the maximum is attained at $\xi=0$ by Theorem \ref{THM h3}. 

\end{proof}

Having established some information about the interpolation operators and their norms for different values of $p$, we now shift our gaze to some convergence properties when the shape parameter $c\to\infty$ for a fixed $\alpha$.  As one might expect, for large (in absolute value) $\alpha$, we obtain better convergence results owing to the more rapid decay of the fundamental functions. 

To begin our discussion, consider the {\em Whittaker operator} defined via
\begin{equation}\label{EQwhittakerdef}
\mathcal{W}\mathbf{y}(x):=\zsum{j}y_j\dfrac{\sin(\pi(x-j))}{\pi(x-j)},\quad x\in\R,\quad \mathbf{y}=(y_j)_{j\in\Z}.
\end{equation}

This operator is bounded from $\ell_p$ to $L_p$ for $1<p<\infty$, and the following holds.

\begin{theorem}\label{THMconvergencetowhittaker}
If $\mathbf{y}\in\ell_p$, $1<p<\infty$, then for a fixed $\alpha\in(-\infty,-3/2)\cup[1/2,\infty)\setminus\N$,
$$\inflim{c}\|\I_{\alpha,c}\mathbf{y}-\mathcal{W}\mathbf{y}\|_{L_p}=0.$$
\end{theorem}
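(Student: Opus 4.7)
The plan is a standard density and uniform-boundedness argument, combined with dominated convergence applied to a single shift of $L_{\alpha,c}$. By Theorem \ref{THMRSthm36} the operators $\I_{\alpha,c}$ are uniformly bounded in $c\geq 1$ as maps $\ell_p\to L_p$, and $\mathcal{W}$ is bounded on the same spaces (as noted above the theorem statement). Since finitely supported sequences form a dense subset of $\ell_p$ for $1<p<\infty$, a standard $\varepsilon/3$ argument reduces the theorem to proving $\|\I_{\alpha,c}\mathbf{y}-\mathcal{W}\mathbf{y}\|_{L_p}\to 0$ for every finitely supported $\mathbf{y}$; by linearity and translation invariance of the $L_p$ norm this reduces further to the single statement
\[
\lim_{c\to\infty}\left\|L_{\alpha,c}(x) - \frac{\sin(\pi x)}{\pi x}\right\|_{L_p(\R)} = 0.
\]

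First I would establish pointwise convergence $L_{\alpha,c}(x)\to\sin(\pi x)/(\pi x)$ for each $x\in\R$. Summing \eqref{EQLchatdef} over $2\pi$-translates yields the identity $\sum_{k\in\Z}\widehat{L_{\alpha,c}}(\xi+2\pi k)=1$. Writing $L_{\alpha,c}$ through Fourier inversion, partitioning $\R$ into the intervals $[-\pi+2\pi k,\pi+2\pi k]$, changing variables, and interchanging sum and integral (justified by $\widehat{L_{\alpha,c}}\geq 0$ together with the identity just stated) one finds
\[
L_{\alpha,c}(x) = \frac{1}{2\pi}\int_{-\pi}^{\pi}e^{ix\xi}\sum_{k\in\Z}\widehat{L_{\alpha,c}}(\xi+2\pi k)e^{2\pi i kx}\,d\xi.
\]
Proposition \ref{PROPconvergencetosinc} says that $\widehat{L_{\alpha,c}}(\xi+2\pi k)\to\delta_{0,k}$ as $c\to\infty$ for a.e.\ $\xi\in(-\pi,\pi)$, and the inner series is dominated by the constant $1$; the dominated convergence theorem then produces the limit $\frac{1}{2\pi}\int_{-\pi}^{\pi}e^{ix\xi}\,d\xi = \sin(\pi x)/(\pi x)$.

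Next I would produce a $c$-independent integrable majorant for $|L_{\alpha,c}(x)-\sin(\pi x)/(\pi x)|$. A single integration by parts in the Fourier inversion formula---whose boundary terms vanish since $\widehat{L_{\alpha,c}}$ is absolutely continuous with $\widehat{L_{\alpha,c}},\widehat{L_{\alpha,c}}'\in L_1(\R)$---combined with the uniform bound $\|\widehat{L_{\alpha,c}}'\|_{L_1}\leq A_\alpha$ from Theorem \ref{THM l3}, gives $|L_{\alpha,c}(x)|\leq A_\alpha/|x|$ for every $c\geq 1$ and $x\neq 0$. Merging this with Lemma \ref{LEM h3} and with the analogous estimates for $\sin(\pi x)/(\pi x)$ yields
\[
\left|L_{\alpha,c}(x) - \frac{\sin(\pi x)}{\pi x}\right| \leq B_\alpha\min\{1,|x|^{-1}\},\qquad c\geq 1,\; x\in\R,
\]
with the right-hand side in $L_p(\R)$ precisely when $p>1$. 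A final application of dominated convergence supplies the required $L_p$ convergence.

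The principal technical obstacle is producing the $c$-uniform integrable majorant above. The $L_1$ bound for $\widehat{L_{\alpha,c}}$ itself coming from Theorem \ref{THM l1} grows polynomially in $c$, and direct routes via Hausdorff--Young or interpolation between $L_2$ and $L_\infty$ fail because neither $\sin(\pi x)/(\pi x)$ nor $L_{\alpha,c}$ lies in $L_1(\R)$. It is precisely the $c$-independent derivative estimate of Theorem \ref{THM l3}, coupled with the periodization identity $\sum_k\widehat{L_{\alpha,c}}(\xi+2\pi k)=1$, that simultaneously delivers the pointwise limit and the integrable dominant.
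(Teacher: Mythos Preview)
Your proof is correct and follows essentially the same route as the paper: reduce by uniform boundedness (Theorem~\ref{THMRSthm36}) and density of finitely supported sequences to showing $\|L_{\alpha,c}-\sinc(\pi\,\cdot)\|_{L_p}\to 0$, then apply dominated convergence with the $c$-independent majorant $\min\{1,|x|^{-1}\}$ coming from Theorem~\ref{THM l3} and Lemma~\ref{LEM h3}. The only variation is cosmetic: the paper obtains the pointwise limit $L_{\alpha,c}\to\sinc(\pi\,\cdot)$ by citing Theorem~\ref{THMbandlimitedconvergence} (taking $f=\sinc(\pi\,\cdot)\in PW_\pi$, for which $\I_{\alpha,c}f=L_{\alpha,c}$), whereas you unwind the same periodization argument directly.
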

\begin{proof}
Boundedness of $\mathcal{W}$, Theorem \ref{THMRSthm36}, and the Uniform Boundedness Principle imply that it is sufficient to check convergence on the coordinate basis for $\ell_p$, $e_j:=\delta_{0,j}$.  That is, if suffices that
$$\|\I_{\alpha,c}e_j-\mathcal{W}e_j\|_{L_p} = \left\|L_{\alpha,c}(\cdot-j)-\dfrac{\sin\left(\pi(\cdot-j)\right)}{\pi(\cdot-j)}\right\|_{L_p} = \left\|L_{\alpha,c}-\dfrac{\sin(\pi\cdot)}{\pi\cdot}\right\|_{L_p}\to0,$$
 as $c\to\infty.$  By Theorem \ref{THMbandlimitedconvergence}, $|L_{\alpha,c}(x)-\sin(\pi x)/(\pi x)|$ converges to 0 uniformly as $c\to\infty$; moreover, Proposition \ref{THM l3} implies that $|\I_{\alpha,c}(x)|\leq A/|x|$, whence an application of the Dominated Convergence Theorem yields the statement of the theorem.
\end{proof}

Suppose a function $f$ has sufficiently slow growth so that $\I_{\alpha,c}\mathbf{y}$, with $\mathbf{y}:=\left(f(j)\right)_{j\in\Z}$, is well-defined (see Theorem \ref{THMRSthm34}).  In this case we let $\I_{\alpha,c}f(x):=\I_{\alpha,c}\mathbf{y}(x)$, and call this the cardinal interpolant of $f$ due to the identity
$$\I_{\alpha,c}f(j) = f(j),\quad j\in\Z.$$

We will consider pointwise and uniform convergence of $\I_{\alpha,c}f$ to $f$ as $c\to\infty$, but first we must make some preliminary arrangements.  Consider $\alpha$ to be fixed.  Then define
\begin{equation}\label{EQPhiRSdefinition}
 \Phi_{\alpha,c}(x,t):=\zsum{j}\Lachat(t+2\pi j)e^{-ix(t+2\pi j)},\quad t,x\in\R,
\end{equation}
and
\begin{equation}\label{EQPhiRSderivativedefinition}
 \Phi_{\alpha,c}^{(k)}(x,t):=\dfrac{\partial^k}{\partial t^k}\Phi_{\alpha,c}(x,t),\quad t,x\in\R.
\end{equation}

\begin{lemma}\label{LEMRSlem42}
 If $\alpha\in[1/2,\infty)\setminus\N$ and $k\in\{0,1,\dots,\floor{2\alpha+1}-2\}$, then
 $\Phi_{\alpha,c}^{(k)}$, is well-defined, continuous in each of its variables, and $2\pi$-periodic in the second variable.

 If $\alpha\in(-\infty,-3/2)$ and $k\in\{0,1,\dots, \ceiling{2|\alpha|-2}-2\}$, then $\Phi_{\alpha,c}^{(k)}$ is well-defined, continuous in each of its variables, and $2\pi$-periodic in the second variable.
 \end{lemma}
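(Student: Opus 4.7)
The plan is to differentiate the defining series for $\Phi_{\alpha,c}$ in $t$ under the summation sign, using Leibniz's rule on each term to produce the candidate
\begin{equation*}
\sum_{j\in\Z}\sum_{\ell=0}^{k}\binom{k}{\ell}(-ix)^{k-\ell}\Lachat^{(\ell)}(t+2\pi j)\,e^{-ix(t+2\pi j)},
\end{equation*}
and then to justify this formal computation by showing that the above series, together with each of its analogues for smaller $k$, converges absolutely and uniformly on every strip $\{(x,t):|x|\leq M,\;t\in[-\pi,\pi]\}$.

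For such $(x,t)$ and $|j|\geq 2$, the argument $t+2\pi j$ lies in the block $[(-2(-j)-1)\pi,(-2(-j)+1)\pi]$, so Proposition \ref{PROP l2}(iii) gives, for each $\ell\leq k$,
\begin{equation*}
\bigl|\Lachat^{(\ell)}(t+2\pi j)\bigr|\leq A_{\alpha,\ell}\,c^{(2\ell+1)(2\alpha-\lfloor\alpha\rfloor)+\ell}\,e^{-2\pi c(|j|-1)};
\end{equation*}
the analogous pointwise bounds alluded to before Theorem \ref{THM l2} handle the case $\alpha<-1$. Combined with $|(-ix)^{k-\ell}|\leq M^{k-\ell}$, and with the observation that the at most three terms indexed by $|j|\leq 1$ are continuous in $(x,t)$, each summand in the double series is dominated by a constant (depending on $\alpha,k,c,M$) times $e^{-2\pi c(|j|-1)}$, which is summable in $j$. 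This gives the required uniform and absolute convergence.

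Uniform convergence of continuous summands produces a jointly continuous limit, so each candidate series is continuous in $(x,t)$ and thus continuous in each variable separately. An induction on $k$, invoking at every step the standard theorem that a uniformly convergent series of $C^{1}$ functions may be differentiated term-by-term, identifies the candidate with $\Phi_{\alpha,c}^{(k)}$ and establishes well-definedness. The $2\pi$-periodicity in $t$ follows from the change of index $j\mapsto j-1$ in \eqref{EQPhiRSdefinition}, which leaves the series invariant; this property passes through $t$-differentiation. The main obstacle is securing enough regularity of $\Lachat$ to differentiate $k$ times while preserving the exponential tail decay that gives uniformity; the hypothesis $k\leq\lfloor 2\alpha+1\rfloor-2$ (respectively $k\leq\lceil 2|\alpha|-2\rceil-2$) keeps us comfortably inside the range where Proposition \ref{PROP l2} applies.
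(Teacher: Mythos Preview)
Your proposal is correct and follows essentially the same route as the paper: both use the exponential tail bounds of Proposition~\ref{PROP l2} to obtain uniform convergence of $\sum_{j}\Lachat^{(\ell)}(t+2\pi j)$ on $[-\pi,\pi]$, which then justifies term-by-term differentiation and yields continuity, while $2\pi$-periodicity is immediate from the structure of the series. Your write-up is in fact more explicit than the paper's brief version---you spell out the Leibniz expansion, separate the $|j|\leq 1$ terms, and note the restriction to $|x|\leq M$ needed for the polynomial factors---but the underlying argument is the same.
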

\begin{proof}
 Continuity of $\widehat{L_{\alpha,c}}^{(k)}$ is provided by Corollaries \ref{COR l3} and \ref{COR l4}, while Proposition \ref{PROP l2} shows that the series $\zsum{j}|\Lachat^{(k)}(t+2\pi j)|$ is uniformly convergent on $[-\pi,\pi]$.  Thus $\Phi_{\alpha,c}$ is well-defined and uniformly continuous for $t\in[-\pi,\pi]$, and moreover, we may differentiate the series term by term.  Finally, $2\pi$-periodicity is evident.
\end{proof}

Now let $C(\mathbb{T})$ be the space of continuous, $2\pi$-periodic functions on the real line, and let $M(\mathbb{T})$ denote its dual, which is the space of all $2\pi$-periodic complex Borel measures on the real line, with the total variation norm given by $\|\mu\|:=|\mu|([-\pi,\pi))$.  Following \cite[p.37]{ka}, given $\mu\in M(\mathbb{T})$, define the $j$-th Fourier-Stieltjes coefficient of $\mu$ by
\begin{equation}\label{EQfourierstieltjescoefficient}
 \widehat{\mu}(j):=\dint_{-\pi}^\pi e^{-ijt}d\mu(t),\quad j\in\Z,
\end{equation}
and the $n$-th Fej\'{e}r mean of the Fourier-Stieltjes series of $\mu$ by
\begin{equation}\label{EQfejermean}
 \sigma_n(\mu,t):=\finsum{j}{-n}{n}\left(1-\dfrac{|j|}{n+1}\right)\widehat{\mu}(j)e^{ijt},\quad t\in\R,\quad n\in\N_0.
\end{equation}

We begin by showing that for certain functions, the interpolant exhibits a special form.
\begin{theorem}\label{THMRSthm43}
 Let $\alpha$ be fixed, and suppose $f$ is given by
 \begin{equation}\label{EQthm43f}
 f(x):=(ix)^k\dint_{-\pi}^\pi e^{-ixt}d\mu(t),\quad x\in\R,  
 \end{equation}
 for some $\mu\in M(\mathbb{T})$ and some $k=0,1,\dots,\floor{2\alpha+1}-2$ in the case $\alpha\in[1/2,\infty)\setminus\N$ or $k=0,1,\cdots,\ceiling{2|\alpha|-2}-2$ in the case $\alpha\in(-\infty,-3/2)$. Let $\Phi_{\alpha,c}^{(k)}$ be defined by \eqref{EQPhiRSderivativedefinition}.  Then
 \begin{equation}\label{EQthm43I}
  \I_{\alpha,c}f(x)=\dint_{-\pi}^\pi\Phi_{\alpha,c}^{(k)}(x,t)d\mu(t),\quad c>0,\quad x\in\R.
 \end{equation}
 \end{theorem}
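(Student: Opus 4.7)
The plan is to reduce the identity to a direct computation by recognizing $\Phi_{\alpha,c}(x,\cdot)$ as a Fourier series in $t$ whose coefficients are precisely the translates $L_{\alpha,c}(x-j)$. Viewed as a $2\pi$-periodic function of $t$, $\Phi_{\alpha,c}(x,t)$ is the $2\pi$-periodization of the map $u\mapsto \Lachat(u)e^{-ixu}$. Computing its $n$th Fourier coefficient by the standard unfolding trick (replacing $e^{-int}$ inside the sum by $e^{-in(t+2\pi j)}$) and then invoking Fourier inversion for $L_{\alpha,c}$ together with the evenness of $L_{\alpha,c}$ (inherited from the fact that $\Lachat$ is radial), I would obtain
\[
\frac{1}{2\pi}\int_{-\pi}^{\pi}\Phi_{\alpha,c}(x,t)e^{-int}\,dt \;=\; \frac{1}{2\pi}\int_{\R}\Lachat(u)e^{-i(x+n)u}\,du \;=\; L_{\alpha,c}(x+n).
\]
Reindexing this expansion yields the pointwise representation $\Phi_{\alpha,c}(x,t) = \zsum{j} L_{\alpha,c}(x-j)e^{-ijt}$.

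Next I would differentiate this series $k$ times in $t$ term by term to obtain
\[
\Phi_{\alpha,c}^{(k)}(x,t) \;=\; \zsum{j}(-ij)^k L_{\alpha,c}(x-j)e^{-ijt}.
\]
The legitimacy of this termwise differentiation is essentially the content of Lemma \ref{LEMRSlem42}: in the admissible range of $k$, Corollary \ref{COR l3} (in the positive case) and Corollary \ref{COR l4} (in the negative case) give $|L_{\alpha,c}(x-j)|=O(|j|^{-r})$ with $r\geq k+2$, so the series $\zsum{j}|j|^k|L_{\alpha,c}(x-j)|$ converges absolutely and uniformly on compact sets, and the Weierstrass M-test justifies termwise differentiation.

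Third, I would substitute $f(j)=(ij)^k\widehat{\mu}(j)$ into the defining series $\I_{\alpha,c}f(x) = \zsum{j} f(j)L_{\alpha,c}(x-j)$, which is well-defined by Theorem \ref{THMRSthm34} since the restriction on $k$ keeps $|f(j)|\leq\|\mu\|\,|j|^k$ comfortably inside the admissible polynomial growth. Interchanging sum and integral, justified by absolute convergence together with $|\mu|(\T)<\infty$, converts $\I_{\alpha,c}f(x)$ into
\[
\zsum{j}(ij)^k L_{\alpha,c}(x-j)\int_{-\pi}^{\pi}e^{-ijt}\,d\mu(t),
\]
and this can be matched termwise against the series for $\int_{-\pi}^{\pi}\Phi_{\alpha,c}^{(k)}(x,t)\,d\mu(t)$ obtained in the previous step, yielding \eqref{EQthm43I}.

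The only subtle point, and therefore the main obstacle, is the bookkeeping of these three interchanges of summation, differentiation, and integration. Each relies on the decay information from Section \ref{SECpropfundamental}: the bound $|L_{\alpha,c}(x-j)|=O(|j|^{-r})$ with $r$ strictly larger than $k+1$ in the prescribed range is exactly what makes $\zsum{j}|j|^k|L_{\alpha,c}(x-j)|$ absolutely convergent for each fixed $x$. Once that decay is in hand, both the termwise differentiation of the Fourier expansion of $\Phi_{\alpha,c}$ and the application of Fubini against the finite measure $\mu$ are routine.
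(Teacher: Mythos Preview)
Your approach is valid and takes a genuinely different route from the paper. The paper passes through Fej\'er means: it writes the Ces\`aro partial sums of $\I_{\alpha,c}f$ as $\frac{1}{2\pi}\int_{-\pi}^{\pi}\big[\sum_{|j|\le n}(1-\tfrac{|j|}{n+1})f(j)e^{ijt}\big]\Phi_{\alpha,c}(x,t)\,dt$, integrates by parts $k$ times to trade the factor $(ij)^k$ in $f(j)$ for $\Phi_{\alpha,c}^{(k)}$, and then lets $n\to\infty$ using the standard fact that $\frac{1}{2\pi}\int\sigma_n(\mu,t)g(t)\,dt\to\int g\,d\mu$ for continuous $g$. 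You instead identify $\Phi_{\alpha,c}(x,t)=\sum_{j}L_{\alpha,c}(x-j)e^{-ijt}$ as an absolutely convergent Fourier series (immediate from Proposition~\ref{PROPRSprop33}), differentiate termwise, and apply Fubini directly against the finite measure $\mu$; all three interchanges are justified by the decay $|L_{\alpha,c}(x-j)|=O(|j|^{-r})$ with $r\ge k+2$ that you correctly cite. Your argument is shorter and more transparent; the paper's Fej\'er-mean argument is slightly more classical and in principle needs only continuity of $\Phi_{\alpha,c}^{(k)}(x,\cdot)$ rather than absolute summability of its Fourier coefficients, though here both come from the same estimates in Section~\ref{SECpropfundamental}. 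One small caveat: in your final matching step you have $(ij)^k$ on one side and $(-ij)^k$ on the other; the stray $(-1)^k$ is really a typo in the theorem statement (the paper's own proof takes $f(j)=(-ij)^k\widehat\mu(j)$, and Example~\ref{EXAMPLE1} writes $(-ix)^k$), so with the intended sign your argument closes cleanly.
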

\begin{proof}
 By definition, $|f(x)|\leq|x|^k\|\mu\|$, thus Theorem \ref{THMRSthm34} implies that $\I_{\alpha,c}f$ is well-defined and continuous.  By property of the Fej\'{e}r means,
 \begin{equation}\label{EQTHM49 1}\I_{\alpha,c}f(x) = \inflim{n}\finsum{j}{-n}{n}f(j)L_{\alpha,c}(x-j) = \inflim{n}\finsum{j}{-n}{n}\left(1-\frac{|j|}{n+1}\right)f(j)L_{\alpha,c}(x-j).\end{equation}
 Therefore, by the inversion formula and a standard periodization argument,
 \begin{align}\label{EQTHM49 2}
\finsum{j}{-n}{n}f(j)L_{\alpha,c}(x-j) & = \dfrac{1}{2\pi}\dint_\R\left[\finsum{j}{-n}{n}\left(1-\frac{|j|}{n+1}\right)f(j)e^{ijt}\right]\Lachat(t)e^{-ixt}dt\nonumber\\
& = \dfrac{1}{2\pi}\dint_{-\pi}^{\pi}\left[\finsum{j}{-n}{n}\left(1-\frac{|j|}{n+1}\right)f(j)e^{ijt}\right]\Phi_{\alpha,c}(x,t)dt.
\end{align}
 
 By definition, $f(j) = (-ij)^k\widehat{\mu}(j)$ for every $j\in\Z$.  So if $k=0$, $f(j) = \widehat{\mu}(j)$, and if $k>0$, $f(0)=0$.  Consequently, if $k=0$, then
 
 $$\dpiifrac\dint_{-\pi}^\pi\left[\finsum{j}{-n}{n}\left(1-\frac{|j|}{n+1}\right)f(j)e^{ijt}\right]\Phi_{\alpha,c}(x,t)dt = \dpiifrac\dint_{-\pi}^\pi\sigma_n(\mu,t)\Phi_{\alpha,c}(x,t)dt.$$
 
 If $k>0$, then we integrate \eqref{EQTHM49 2} by parts $k$ times.  The boundary terms cancel due to periodicity, so
 \begin{align}\label{EQTHM49 3}
 & \dpiifrac\dint_{-\pi}^\pi\left[\finsum{j}{-n}{n}\left(1-\dfrac{|j|}{n+1}\right)f(j)e^{ijt}\right]\Phi_{\alpha,c}(x,t)dt \nonumber\\
 &= (-1)^k\dpiifrac\dint_{-\pi}^\pi\left[\dsum_{1\leq|j|\leq n}\left(1-\dfrac{|j|}{n+1}\right)\dfrac{f(j)}{(ij)^k}e^{ijt}\right]\Phi^{(k)}_{\alpha,c}(x,t)dt \nonumber\\
 &= \dpiifrac\dint_{-\pi}^\pi\left[\dsum_{1\leq|j|\leq n}\left(1-\dfrac{|j|}{n+1}\right)\widehat{\mu}(j)e^{ijt}\right]\Phi_{\alpha,c}^{(k)}(x,t)dt\nonumber\\
& = \dpiifrac\dint_{-\pi}^\pi\left[\finsum{j}{-n}{n}\left(1-\dfrac{|j|}{n+1}\right)\widehat{\mu}(j)e^{ijt}\right]\Phi_{\alpha,c}^{(k)}(x,t)dt,
 \end{align}
 where the final equality comes from the fact that $\int_{-\pi}^\pi\Phi_{\alpha,c}^{(k)}(x,t)dt = 0$ for $k\geq1$.
 
 Combining \eqref{EQTHM49 1}, \eqref{EQTHM49 2}, and \eqref{EQTHM49 3}, we see that
 $$\I_{\alpha,c}f(x) = \inflim{n}\dpiifrac\dint_{-\pi}^\pi\sigma_n(\mu,t)\Phi_{\alpha,c}^{(k)}(x,t)dt = \dint_{-\pi}^\pi\Phi_{\alpha,c}^{(k)}(x,t)d\mu(t).$$
\end{proof}

Using Theorem \ref{THMRSthm43}, we show the following result on uniform convergence.

\begin{theorem}\label{THMRSthm44}
 Suppose $\alpha$ is as above, and $f$ is given by
 $$ f(x):=(ix)^k\dint_{-\pi}^\pi e^{-ixt}d\mu(t),\quad x\in\R, $$
  for some $k$ as in Theorem \ref{THMRSthm43} and $\mu\in M(\mathbb{T})$ such that $$\supp(\mu)\cap[-\pi,\pi)\subset[-\pi(1-\varepsilon),\pi(1-\varepsilon)],$$
  for some fixed $0<\varepsilon<1$.  Then
  $$\inflim{c}\I_{\alpha,c}f(x) = f(x),\quad x\in\R,$$
  with convergence being uniform on compact subsets of $\R$.  In the case $k=0$, convergence is uniform on $\R$.
\end{theorem}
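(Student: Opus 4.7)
The plan is to start from the representation provided by Theorem \ref{THMRSthm43}, namely
$$\I_{\alpha,c}f(x)=\int_{-\pi}^{\pi}\Phi_{\alpha,c}^{(k)}(x,t)\,d\mu(t),$$
and to recognize $f(x)$ itself in the companion form $f(x)=\int_{-\pi}^{\pi}F(x,t)\,d\mu(t)$, where $F(x,t)$ is the natural pointwise $c\to\infty$ limit of $\Phi_{\alpha,c}^{(k)}(x,t)$ arising from the single surviving $(j,\ell)=(0,0)$ term in the Leibniz expansion below. The entire proof then reduces to showing $\Phi_{\alpha,c}^{(k)}(x,t)\to F(x,t)$ as $c\to\infty$ uniformly in $t\in\supp(\mu)\cap[-\pi,\pi)\subset[-\pi(1-\varepsilon),\pi(1-\varepsilon)]$, since once that bound is in hand, integrating against the finite measure $|\mu|$ immediately yields the pointwise and compact-uniform convergence of $\I_{\alpha,c}f(x)$ to $f(x)$.

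To produce the uniform-in-$t$ estimate, I differentiate the series \eqref{EQPhiRSdefinition} termwise and apply Leibniz to obtain
$$\Phi_{\alpha,c}^{(k)}(x,t)=\sum_{j\in\Z}\sum_{\ell=0}^{k}\binom{k}{\ell}(-ix)^{k-\ell}\widehat{L_{\alpha,c}}^{(\ell)}(t+2\pi j)\,e^{-ix(t+2\pi j)}.$$
Isolating the $(j,\ell)=(0,0)$ contribution matches $F(x,t)$ modulo an error controlled by $|1-\widehat{L_{\alpha,c}}(t)|\cdot|x|^{k}$. The remaining terms then split naturally into the three regimes of Proposition \ref{PROP l2}: (a) $j=0$, $\ell\geq 1$, where $|t|\leq\pi(1-\varepsilon)$ and part (i) supplies $e^{-2\pi c\varepsilon}$-decay; (b) $j=\pm 1$, where $|t+2\pi j|\in[\pi(1+\varepsilon),3\pi]$ and part (ii) supplies $e^{-\pi c\varepsilon}$-decay; (c) $|j|\geq 2$, where part (iii) supplies $e^{-2\pi c(|j|-1)}$-decay, making the $j$-sum a convergent geometric series. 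Combining these gives
$$|\Phi_{\alpha,c}^{(k)}(x,t)-F(x,t)|\leq C_{\alpha,k}\,(1+|x|)^{k}\,e^{-\pi c\varepsilon}$$
uniformly for $t\in[-\pi(1-\varepsilon),\pi(1-\varepsilon)]$ and all sufficiently large $c$; for $\alpha<-1$ the same argument works after replacing Proposition \ref{PROP l2} by the analogous pointwise decay estimates that underlie Theorem \ref{THM l2}.

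Integrating the previous display against $|\mu|$ yields
$$|\I_{\alpha,c}f(x)-f(x)|\leq C_{\alpha,k}\,\|\mu\|\,(1+|x|)^{k}\,e^{-\pi c\varepsilon},$$
from which pointwise convergence everywhere and uniform convergence on compact subsets of $\R$ are immediate. When $k=0$ the factor $(1+|x|)^{k}$ collapses to $1$, and the bound is uniform on all of $\R$, giving the final sentence of the theorem.

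The main obstacle I foresee is supplying the $k=0$ estimate $|1-\widehat{L_{\alpha,c}}(t)|\leq A_\alpha(\varepsilon)e^{-2\pi c\varepsilon}$ on $|t|\leq\pi(1-\varepsilon)$, which is not recorded verbatim in the excerpt because Proposition \ref{PROP l2} addresses the size of $\widehat{L_{\alpha,c}}^{(k)}$ rather than the gap between $\widehat{L_{\alpha,c}}$ and $1$ on the interior. However, this follows by the same mechanism used in Proposition \ref{PROPconvergencetosinc}: from \eqref{EQ l2} one has $1-\widehat{L_{\alpha,c}}(t)=s_{\alpha,c}(t)/(1+s_{\alpha,c}(t))\leq s_{\alpha,c}(t)$, and Lemma \ref{LEMbaxterphihatdecay} dominates $s_{\alpha,c}(t)$ by $e^{-2\pi c\varepsilon}\sum_{j\neq 0}e^{-2\pi c(|j|-1)}$. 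With that estimate in place, the rest of the proof is a clean uniform-in-$t$ dominated convergence argument, closely parallel in spirit to Theorem \ref{THMbandlimitedconvergence}.
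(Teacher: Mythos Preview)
Your proposal is correct and follows essentially the same approach as the paper: both start from the representation in Theorem \ref{THMRSthm43}, differentiate $\Phi_{\alpha,c}$ termwise via Leibniz, isolate the $(j,\ell)=(0,0)$ contribution (controlled by $|1-\widehat{L_{\alpha,c}}(t)|$, handled via the mechanism of Proposition \ref{PROPconvergencetosinc}), and bound the remaining terms by splitting into the three regimes $j=0$, $|j|=1$, $|j|\geq 2$ of Proposition \ref{PROP l2}. Your explicit quantitative bound $C_{\alpha,k}\|\mu\|(1+|x|)^{k}e^{-\pi c\varepsilon}$ is a mild sharpening of the paper's qualitative ``converges uniformly to $0$'' statements, but the structure of the argument is identical.
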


\begin{proof}
 Suppose first that $k\geq1$.  Let $M>0$ be fixed.  We will show that $\I_{\alpha,c}f(x)\to f(x)$ uniformly for $|x|\leq M$.  By Theorem \ref{THMRSthm43} and the definitions of $f$ and $\Phi_{\alpha,c}^{(k)}$,
 \begin{align*}
 \I_{\alpha,c}f(x)-f(x) & = \dint_{-\pi(1-\eps)}^{\pi(1-\eps)}\dfrac{\partial^k}{\partial t^k}\left(\Phi_{\alpha,c}(x,t)-e^{-ixt}\right)d\mu(t)\\
 & = \dint_{-\pi(1-\eps)}^{\pi(1-\eps)}\dfrac{\partial^k}{\partial t^k}\left(e^{-ixt}\left(\Lachat(t)-1\right)\right)d\mu(t)\\ & + \dint_{-\pi(1-\eps)}^{\pi(1-\eps)}\dfrac{\partial^k}{\partial t^k}\left(\dsum_{j\neq0}\Lachat(t+2\pi j)e^{-ix(t+2\pi j)}\right)d\mu(t)\\
 & =: \dint_{-\pi(1-\eps)}^{\pi(1-\eps)}I_{1,c}(t)d\mu(t) + \dint_{-\pi(1-\eps)}^{\pi(1-\eps)}I_{2,c}(t)d\mu(t).
 \end{align*}
 
 By applying the Leibniz rule,
 $$I_{2,c}(t) = (-ix)^ke^{-ixt}\left(\Lachat(t)-1\right)+\finsum{j}{0}{k-1}\binom{k}{j}(-ix)^{k-j}e^{-ixt}\Lachat^{(j)}(t).$$
 The first term above converges to 0 uniformly for $|t|\leq\pi(1-\eps)$, which can be seen from the last portion of the proof of Proposition \ref{PROPconvergencetosinc}.  Additionally, by Proposition \ref{PROP l2}(i) the second term above is bounded above by
 $$\finsum{j}{0}{k-j}A_{k,M,\eps,\alpha}c^{2j(2\alpha-\floor{\alpha})+j}e^{-2\pi c\eps},$$
 and hence converges uniformly to 0 as $c\to\infty$.  
 
 Similarly, write
 $$I_{2,c}(t) = \finsum{j}{0}{k}\binom{k}{j}(-ix)^{k-j}e^{-ixt}\left[\dsum_{l\neq0}e^{-i2\pi xl}\Lachat(t+2\pi l)\right].$$
 
 Here, if $|t|\leq\pi(1-\eps)$, then whenever $l=\pm1$ and $|l|\geq2$, $|t+2\pi l|$ falls into the respective ranges for Proposition \ref{PROP l2}(ii) and (iii).  Applying the estimates of that proposition demonstrates that $I_{2,c}(t)\to0$ uniformly as $c\to\infty$ for $|x|\leq M$.
 
 If $k=0$, then we again split the integral into two pieces, and analyze the corresponding integrands $I_{1,c}$ and $I_{2,c}$.  However, notice that there are no terms of the form $(ix)^{l}$, and so the inequalities from Proposition \ref{PROP l2} give upper bounds on the integrands that do not depend on $x$ at all, and so the convergence of $\I_{\alpha,c}$ to $f$ is uniform on $\R$.
\end{proof}

The condition on the support of the measure $\mu$ in the previous Theorem was essential because of the use of Proposition \ref{PROP l2} in the proof.  Heuristically, the condition on the support should be of no surprise due to the fact that we have no uniform control (in $c$) of the derivatives of $\Lachat$  at the boundary points $\pm\pi$.  In fact, it is likely that the derivatives get much larger near the boundary as $c$ grows, especially given the fact that $\Lachat$ converges to the characteristic function of $(-\pi,\pi)$.  Nevertheless, we may make a weaker assumption on $\mu$ which still yields a convergence result.

\begin{theorem}\label{THMRSthm46}
 Suppose $\alpha$ is as above, and $f$ is given by
 $$f(x):=\dint_{-\pi}^\pi e^{-ixt}d\mu(t),\quad x\in\R,$$
 for some $\mu\in M(\mathbb{T})$ which is absolutely continuous with respect to the Lebesgue measure.  Then
 $$\inflim{c}\I_{\alpha,c}f(x) = f(x),\quad x\in\R,$$
 with convergence being uniform on $\R$.
\end{theorem}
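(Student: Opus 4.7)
Since $\mu$ is absolutely continuous with respect to Lebesgue measure on $[-\pi,\pi]$, I would write $d\mu(t) = g(t)\,dt$ for some $g \in L_1[-\pi,\pi]$. The plan is to approximate $g$ in $L_1$ by continuous functions whose support sits strictly inside $(-\pi,\pi)$, reduce the approximation to Theorem \ref{THMRSthm44}, and bound the residual uniformly in $c$ without invoking the $\ln(c)$ factor from Proposition \ref{PROPRS38}.

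Fix $\eta > 0$. By density of $C_c((-\pi,\pi))$ in $L_1[-\pi,\pi]$, I can choose a continuous $g_\eta$ with $\supp(g_\eta) \subset [-\pi(1-\varepsilon),\pi(1-\varepsilon)]$ for some $\varepsilon = \varepsilon(\eta) > 0$, and $\|g - g_\eta\|_{L_1} < \eta$. Setting $d\mu_\eta = g_\eta\,dt$ and $f_\eta(x) := \int_{-\pi}^\pi e^{-ixt}\,d\mu_\eta(t)$, Theorem \ref{THMRSthm44} (with $k=0$) then applies and gives $\I_{\alpha,c} f_\eta \to f_\eta$ uniformly on $\R$ as $c \to \infty$. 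Clearly also $\|f - f_\eta\|_{L_\infty(\R)} \leq \|g - g_\eta\|_{L_1} < \eta$.

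The crux is to bound $\I_{\alpha,c}(f - f_\eta)$ uniformly in $x$ and $c$. Since $f - f_\eta$ has the form $\int e^{-ixt}\,d\nu(t)$ with $d\nu = (g - g_\eta)\,dt$, I would invoke Theorem \ref{THMRSthm43} (case $k=0$) to write
$$\I_{\alpha,c}(f - f_\eta)(x) = \int_{-\pi}^\pi \Phi_{\alpha,c}(x,t)\bigl(g(t) - g_\eta(t)\bigr)\,dt.$$
The key observation is the partition-of-unity identity
$$\sum_{j\in\Z} \widehat{L_{\alpha,c}}(t + 2\pi j) = 1, \quad t \in \R,$$
which is immediate from $\widehat{L_{\alpha,c}} \geq 0$ and the definition \eqref{EQLchatdef} (the numerators $\widehat{\phi_{\alpha,c}}(t + 2\pi j)$ telescope against the common denominator). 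From this and the definition of $\Phi_{\alpha,c}$, $|\Phi_{\alpha,c}(x,t)| \leq 1$ for every $x,t$, so $\|\I_{\alpha,c}(f - f_\eta)\|_{L_\infty(\R)} \leq \|g - g_\eta\|_{L_1} < \eta$. The triangle inequality then yields
$$\sup_{x\in\R}|\I_{\alpha,c}f(x) - f(x)| \leq \sup_{x\in\R}|\I_{\alpha,c}f_\eta(x) - f_\eta(x)| + 2\eta;$$
letting $c \to \infty$ and then $\eta \to 0$ finishes the proof.

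The main obstacle I expect is exactly the estimate on $\I_{\alpha,c}(f - f_\eta)$. The obvious route through Proposition \ref{PROPRS38}(ii) only gives $\|\I_{\alpha,c}(f - f_\eta)\|_{L_\infty} \leq A_\alpha \ln(c)\|f - f_\eta\|_{\ell_\infty}$, and this $\ln(c)$ factor couples the limits in $c$ and $\eta$, apparently requiring a delicate joint choice $\eta = \eta(c)$ together with an explicit rate in Theorem \ref{THMRSthm44}. Rerouting through the integral representation of Theorem \ref{THMRSthm43} and exploiting the partition-of-unity identity for the periodization of $\widehat{L_{\alpha,c}}$ produces a $c$-uniform bound and completely decouples the two limits.
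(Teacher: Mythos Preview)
Your argument is correct. The partition-of-unity identity $\sum_{j}\widehat{L_{\alpha,c}}(t+2\pi j)=1$ is exactly the computation appearing in the proof of Theorem~\ref{THMbandlimitedconvergence} (where it is written as $\sum_{k\neq0}\widehat{L_{\alpha,c}}(\xi+2\pi k)=1-\widehat{L_{\alpha,c}}(\xi)$), and together with $\widehat{L_{\alpha,c}}\geq0$ it gives the clean bound $|\Phi_{\alpha,c}(x,t)|\leq1$ uniformly in $c$, $x$, $t$. The appeal to Theorem~\ref{THMRSthm43} with $k=0$ for the difference measure $d\nu=(g-g_\eta)\,dt$ is legitimate throughout the stated range of $\alpha$.

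The paper's proof is organized differently. Rather than approximating $g$ in $L_1$ and invoking Theorem~\ref{THMRSthm44} on the approximant, it writes $\I_{\alpha,c}f(x)-f(x)$ directly via Theorem~\ref{THMRSthm43}, splits the domain $[-\pi,\pi]$ into an interior piece $[-\pi(1-\varepsilon),\pi(1-\varepsilon)]$ and two boundary pieces of small $|\mu|$-measure (using absolute continuity), and then bounds the integrands on the boundary pieces by constants independent of $c$. For that last step the paper separates the $j=0$ term and the sum over $j\neq0$, bounding the latter by $2+\sum_{|j|\geq2}|\widehat{L_{\alpha,c}}(t+2\pi j)|$ and invoking the exponential decay of Proposition~\ref{PROP l2}(iii). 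Your single inequality $|\Phi_{\alpha,c}|\leq1$ replaces this two-step estimate and avoids Proposition~\ref{PROP l2} altogether, which is tidier; on the other hand, the paper's domain-splitting avoids the density argument and works directly with the measure. Both routes exploit the same underlying mechanism---a $c$-uniform bound that decouples the ``boundary'' contribution from the limit in $c$---so the difference is one of packaging rather than substance.
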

\begin{proof}
 The proof is quite similar to that of Theorem \ref{THMRSthm44}.  Let $\gamma>0$ be arbitrary, and choose $\eps>0$ such that $|\mu|[-\pi,-\pi(1-\eps)]+|\mu|[\pi(1-\eps),\pi]<\gamma$ since $|\mu|$ is absolutely continuous with respect to the Lebesgue measure.  Then, as before,
\begin{multline}
 \I_{\alpha,c}f(x)-f(x) = \dint_{-\pi}^\pi e^{-ixt}\left(\Lachat(t)-1\right)d\mu(t)\\+\dint_{-\pi}^\pi\left(\dsum_{j\neq0}\Lachat(t+2\pi j)e^{-ix(t+2\pi j)}\right)d\mu(t).
\end{multline}
 
 Split each integral into one over $[-\pi(1-\eps),\pi(1-\eps)]$ and one near the endpoints.  The integral over the interior segment can be handled exactly as in the proof of Theorem \ref{THMRSthm44}.  For the integrals near the endpoints, notice that $|e^{-ixt}(\Lachat(t)-1)|\leq2$, and so the first integral is at most $2(|\mu|[-\pi,-\pi(1-\eps)]+|\mu|[\pi(1-\eps),\pi])$, which is at most $2\gamma$ by the choice of $\eps$.  Meanwhile, by Proposition \ref{PROP l2}(iii), the second integrand is at most
 $$2+\dsum_{|j|\geq2}\left|\Lachat(t+2\pi j)\right|\leq2+A_\alpha\dsum_{|j|\geq2}c^{2\alpha-\floor{\alpha}}e^{-2\pi c(|j|-1)},$$
 which can be bounded by a constant depending only on $\alpha$.  Thus $|\I_{\alpha,c}f(x)-f(x)|\leq A_\alpha\gamma$ for some constant $A_\alpha$ independent of $c$, and so the conclusion follows.
\end{proof}

\begin{remark}
We conclude this section with the special consideration of the case $\alpha=-1$, where $\phi_{\alpha,c}$ is called the Poisson kernel.  As mentioned above, the fundamental function for the Poisson kernel, $L_{-1,c}$, decays faster than any polynomial.  Thus any results in this section that depend on the decay rate of the fundamental function hold true for the Poisson kernel.  In particular, Propositions \ref{PROPRSprop33} and \ref{PROPRS38}, and Theorems \ref{THMRSthm35}, \ref{THMRSthm36}, \ref{THMRSthm311}, \ref{THMconvergencetowhittaker}, and \ref{THMRSthm46} are all valid for the Poisson kernel, since existence of the interpolant primarily depends on the decay of the fundamental function.

Moreover, because the fundamental function for the Poisson kernel decays so rapidly, we find much stronger versions of the remaining results in this section.
\end{remark}

We begin by stating the analogue of Theorem \ref{THMRSthm34}.

\begin{theorem}\label{THMRSthm34Poisson}
Suppose that $$|y_j|\leq A(1+|j|^k),\quad j\in\Z$$ for some $k\in\N_0$ and constant $A$.  Then the {\em cardinal Poisson interpolant},
$$\I_{-1,c}\mathbf{y}(x):=\zsum{j}y_jL_{-1,c}(x-j),\quad\mathbf{y}=(y_j)_{j\in\Z},$$ is well-defined, continuous on $\R$, and satisfies $\I_{-1,c}\mathbf{y}(x) = O(1+|x|^l),\;|x|\to\infty$ for any $l\geq k$.

\end{theorem}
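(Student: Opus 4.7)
The plan is to mirror the proof of Theorem \ref{THMRSthm34}, exploiting the super-polynomial decay of $L_{-1,c}$ provided by Corollary \ref{COR KPoissondecay}. In that earlier theorem the finite polynomial decay of $L_{\alpha,c}$ forced a matching restriction on the admissible growth of $\mathbf{y}$; here we are free to pick any decay exponent we wish, which accommodates polynomial data of any degree.

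For continuity on a fixed compact $[-M,M]$ with $M\in\N$, I would split
$$\I_{-1,c}\mathbf{y}(x)=\sum_{|j|\leq 2M}y_jL_{-1,c}(x-j)+\sum_{|j|>2M}y_jL_{-1,c}(x-j).$$
The first summand is a finite sum of continuous functions. For the tail, fix an integer $N\geq k+2$ and apply Corollary \ref{COR KPoissondecay} to obtain $|L_{-1,c}(x-j)|\leq A_N|x-j|^{-N}$. Since $|j|>2M$ and $|x|\leq M$ imply $|x-j|\geq |j|/2$, each summand is bounded by a constant multiple of $(1+|j|^k)|j|^{-N}$, which is summable and independent of $x$. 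The Weierstrass $M$-test gives uniform convergence on $[-M,M]$, whence continuity of $\I_{-1,c}\mathbf{y}$ on $\R$.

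For the growth estimate, I would adapt the nearest-integer trick from Theorem \ref{THMRSthm34}. Fix $|x|\geq 1$ and let $\nu=\nu(x)$ be the unique integer with $\nu-1/2\leq x<\nu+1/2$, so that $|\nu|\leq 2|x|$. By Lemma \ref{LEM h3}, which applies at $\alpha=-1$, the $j=\nu$ term contributes $|y_\nu L_{-1,c}(x-\nu)|\leq A(1+|\nu|^k)=O(|x|^k)$. For the remaining terms, pick $N\geq k+2$; since $j\neq\nu$ forces $|j-\nu|\geq 1$, we have $|x-j|\geq|j-\nu|/2$, and Corollary \ref{COR KPoissondecay} combined with the substitution $m=j-\nu$ yields
$$\sum_{j\neq\nu}|y_j|\,|L_{-1,c}(x-j)|\leq A_N\sum_{m\neq 0}\dfrac{1+|m+\nu|^k}{|m|^N}.$$
Using $|m+\nu|^k\leq 2^k(|m|^k+|\nu|^k)$ and splitting the sum produces two convergent tails (both because $N-k\geq 2$), bounded as a whole by $A(1+|\nu|^k)=O(1+|x|^k)$. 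Adding this to the $j=\nu$ contribution yields $\I_{-1,c}\mathbf{y}(x)=O(1+|x|^k)$, which is a fortiori $O(1+|x|^l)$ for every $l\geq k$.

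I do not anticipate a serious obstacle: the argument is a streamlining of the proof of Theorem \ref{THMRSthm34} in which the super-polynomial decay of $L_{-1,c}$ removes the delicate balance between the decay of the fundamental function and the admissible growth of the data. The only modest point is the selection of $N$ as a function of $k$, which is transparent.
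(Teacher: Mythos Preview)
Your proposal is correct and follows exactly the approach the paper intends: the paper gives no separate proof of Theorem \ref{THMRSthm34Poisson}, stating it simply as the analogue of Theorem \ref{THMRSthm34} with the super-polynomial decay of Corollary \ref{COR KPoissondecay} replacing the finite-order decay used there. Your choice of an auxiliary exponent $N\geq k+2$, the invocation of Lemma \ref{LEM h3} at $\alpha=-1$ (which is within its stated range), and the nearest-integer decomposition are precisely the ingredients implicit in the paper's treatment.
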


As a consequence of the preceding theorem, the function $\Phi_{-1,c}$ defined by \eqref{EQPhiRSdefinition} has well-defined derivatives of all orders.  In particular, Lemma \ref{LEMRSlem42} holds, and consequently we find the following analogue of Theorem \ref{THMRSthm43}.

\begin{theorem}
Suppose $f$ is given by
$$f(x):=(ix)^k\dint_{-\pi}^\pi e^{-ixt}d\mu(t),$$
for some $\mu\in M(\mathbb{T})$ and some $k\in\N_0$.  Then
$$\I_{-1,c}f(x) = \dint_{-\pi}^\pi \Phi_{-1,c}^{(k)}(x,t)d\mu(t),\quad c>0,\quad x\in\R.$$
\end{theorem}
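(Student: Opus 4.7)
The plan is to follow the proof of Theorem \ref{THMRSthm43} almost verbatim, the crucial point being that for $\alpha = -1$ the restriction $k \leq \floor{2\alpha+1}-2$ is removed: Theorem \ref{THM KPoissonFTdecay} guarantees $\widehat{L_{-1,c}}^{(j)} \in L_1(\R)$ for \emph{every} $j \in \N_0$, so the analogue of Lemma \ref{LEMRSlem42} holds for $\Phi_{-1,c}^{(k)}$ at every $k \in \N_0$. This removes the smoothness ceiling that constrained the exponent in the general case.

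First I would observe that $|f(x)| \leq |x|^k \|\mu\|$, so the data $(f(j))_{j\in\Z}$ grows polynomially of degree $k$. Theorem \ref{THMRSthm34Poisson} then ensures $\I_{-1,c}f$ is well-defined and continuous on $\R$. Next, I would write $\I_{-1,c}f(x)$ as the Ces\`aro (Fej\'er) limit of its partial sums,
$$\I_{-1,c}f(x) = \inflim{n}\finsum{j}{-n}{n}\left(1-\dfrac{|j|}{n+1}\right)f(j)L_{-1,c}(x-j),$$
and, for each fixed $n$, apply the Fourier inversion formula to $L_{-1,c}$ together with the standard periodization of the integral against $\widehat{L_{-1,c}}$ to obtain
$$\dpiifrac\dint_{-\pi}^\pi\left[\finsum{j}{-n}{n}\left(1-\dfrac{|j|}{n+1}\right)f(j)e^{ijt}\right]\Phi_{-1,c}(x,t)\,dt.$$
This is the same manipulation as in the proof of Theorem \ref{THMRSthm43}, and it is valid because $\widehat{L_{-1,c}} \in L_1(\R)$ and the series defining $\Phi_{-1,c}(x,\cdot)$ converges uniformly in $t$ by Theorem \ref{THM KPoissonFTdecay}.

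For $k \geq 1$, I would integrate by parts $k$ times in $t$. The boundary terms vanish by $2\pi$-periodicity of both the bracketed trigonometric polynomial and of each $\Phi_{-1,c}^{(j)}(x,\cdot)$; the latter periodicity is exactly the Poisson-kernel analogue of Lemma \ref{LEMRSlem42}, whose hypotheses are met for every $k \in \N_0$ thanks to Theorem \ref{THM KPoissonFTdecay}. Writing $f(j) = (-ij)^k\widehat{\mu}(j)$, the factor $(ij)^{-k}$ produced by integration by parts cancels the $(-ij)^k$ in $f(j)$, turning the bracket into the Fej\'er mean $\sigma_n(\mu,t)$ of the Fourier--Stieltjes series of $\mu$. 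The case $k=0$ requires no integration by parts and is immediate. Finally, since $\Phi_{-1,c}^{(k)}(x,\cdot) \in C(\mathbb{T})$ and Fej\'er means converge weak-$\ast$ to $\mu$ in $C(\mathbb{T})^\ast$, passing to the limit yields
$$\I_{-1,c}f(x) = \inflim{n}\dpiifrac\dint_{-\pi}^\pi \sigma_n(\mu,t)\Phi_{-1,c}^{(k)}(x,t)\,dt = \dint_{-\pi}^\pi \Phi_{-1,c}^{(k)}(x,t)\,d\mu(t).$$

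The main obstacle is essentially bookkeeping: one must check that each integration-by-parts step is legitimate for arbitrary $k$, which rests entirely on the $L_1$-bounds on $\widehat{L_{-1,c}}^{(k)}$ supplied by Theorem \ref{THM KPoissonFTdecay} together with the uniform convergence of the periodized series for $\Phi_{-1,c}^{(k)}(x,\cdot)$. No new idea is needed beyond recognizing that the super-polynomial decay of $L_{-1,c}$ makes $\Phi_{-1,c}$ arbitrarily smooth in $t$, so the proof of Theorem \ref{THMRSthm43} goes through for every $k \in \N_0$.
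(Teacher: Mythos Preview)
Your proposal is correct and takes essentially the same approach as the paper. In fact, the paper does not give a separate proof for this theorem at all: it simply remarks that, since Theorem~\ref{THM KPoissonFTdecay} makes Lemma~\ref{LEMRSlem42} hold for every $k\in\N_0$, the argument of Theorem~\ref{THMRSthm43} carries over verbatim---which is precisely what you have outlined (and you have supplied rather more detail than the paper does).
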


Finally, Theorem \ref{THMRSthm44} holds for any $k\in\N_0$ for the Poisson kernel.

\section{Convergence Examples}\label{SECExamples}

In this section, we illustrate the convergence phenomena discussed in the previous section.  The examples are of a similar flavor to those found in \cite{RiemSiva}.

\begin{example}\label{EXAMPLE1}
Let $\alpha\in[1/2,\infty)\setminus\N$ and $k\in\{0,1,\dots,\floor{2\alpha+1}-2\}$, or $\alpha\in(-\infty,-3/2)$ and $k\in\{0,1,\dots,\ceiling{2|\alpha|-2}-2\}$, or $\alpha=-1$ and $k\in\N_0$.  Let $\mu_k$ be the $2\pi$-periodic extension of the measure $i^k\delta_0$, where $\delta_0$ is the usual Dirac measure at 0.  If
$$f_k(x):=(-ix)^k\dint_{-\pi}^\pi e^{-ixt}d\mu_k(t) = x^k,\quad x\in\R,$$
then Theorem \ref{THMRSthm44} implies that $\inflim{c}\I_{\alpha,c}f_k(x)=f_k(x)$ uniformly on compact subsets of $\R$.
\end{example}

However, Theorem \ref{THM h3} allows us to say more given this information.  If $k=0$, then we find the following identity on account of Theorem \ref{THMRSthm43}:

\begin{equation}
\I_{\alpha,c}f_0(x) = \Phi_{\alpha,c}(x,0) = \zsum{j}\Lachat(2\pi j)e^{-ix2\pi j} = 1.
\end{equation}
For higher order polynomials, we may use Theorems \ref{THMRSthm43} and \ref{THMRSthm44} to show that
$$\I_{\alpha,c}f_k(x)-x^k = i^k\finsum{l}{0}{k-1}\zsum{j}\Lachat^{(k-l)}(2\pi j)(-ix)^le^{-ix2\pi j},$$
whereby one can obtain an error bound in terms of $c$ via Proposition \ref{PROP l2}.  This also demonstrates that $\I_{\alpha,c}f_k\to f_k$ uniformly on compact subsets of $\R$.

\begin{example}
Let $0<a\leq\pi$ be fixed, and let $\mu$ be the $2\pi$-periodic extension of $\frac{1}{2a}\chi_{[-a,a]}dt$, where $\chi_{[-a,a]}$ takes value 1 on $[-a,a]$ and 0 elsewhere.  Define
$$f(x) = \dint_{-\pi}^\pi e^{-ixt}d\mu(t) = \frac{1}{2a}\dfrac{\sin(ax)}{ax},\quad x\in\R.$$
Since $\mu$ is absolutely continuous with respect to the Lebesgue measure, Theorem \ref{THMRSthm46} implies that $\I_{\alpha,c}f\to f$ uniformly on $\R$.  Note that this fact also follows from Theorem \ref{THMbandlimitedconvergence}, albeit from substantially different reasoning.  However, the hypothesis of Theorem \ref{THMbandlimitedconvergence} requires the function being interpolated to be bandlimited, and so the following result cannot be obtained simply by appealing to that theorem.
\end{example}

\begin{example}
Let $\alpha$ and $k$ be as in Example \ref{EXAMPLE1}.  Let $\mu_k$ be the periodic extension of $i^k\frac{1}{2a}\chi_{[-a,a]}dt$ for some $a<\pi$.  Then if
$$g_k(x):=(-ix)^k\dint_{-\pi}^\pi e^{-ixt}d\mu_k(t) = x^k\dfrac{\sin(ax)}{ax},\quad x\in\R,$$
Theorem \ref{THMRSthm44} implies that $\inflim{c}\I_{\alpha,c}g_k(x)= g_k(x)$, uniformly on compact subsets of $\R$.
\end{example}

\section{Proofs for Section \ref{SECpropfundamental}}\label{SECproofs}

In this section we prove the various results listed in Section \ref{SECpropfundamental}.  Our methods closely resemble those found in \cite{RiemSiva}.  To reduce the clutter in our calculations, we will henceforth drop explicit dependence upon $\alpha$ in our calculations that follow.  We begin by rewriting \eqref{EQgmcft} in terms of a Laplace transform to exhibit the singularity at the origin.  To do this, we require an integral representation for the Bessel function, which we find from \cite[p.185]{Watson}:
\begin{equation}\label{WatsonIntegral}
K_\nu(r) = \dfrac{\Gamma(\frac{1}{2})r^\nu}{2^\nu\Gamma(\nu+\frac{1}{2})}\dint_1^\infty e^{-rx}(x^2-1)^{\nu-\frac{1}{2}}dx,\quad \nu\geq0,r>0.
\end{equation}

Consequently, putting $r=c|\xi|$ and performing the substitution $x|\xi| = t+|\xi|$ yields the following on account of \eqref{EQgmcft}.

\begin{equation}\label{EQ JL1}
\widehat{\phi_{c}}(\xi)= A_\alpha c^{2\alpha+1}|\xi|^{-2\alpha-1}e^{-c|\xi|}\int_{0}^{\infty}e^{-ct}t^\alpha(t+2|\xi|)^\alpha  dt,
\end{equation}
where $A_\alpha$ is a constant.  We relabel the product of the exponential and the integral in the above expression $F_\alpha$.  That is,
\begin{equation}\label{EQ JL2}
F_\alpha(\xi):=e^{-c|\xi|}\int_{0}^{\infty}e^{-ct}t^\alpha(t+2|\xi|)^\alpha  dt,
\end{equation} 
hence \eqref{EQ JL1} may be abbreviated as
\begin{equation}\label{EQ JL3}
\widehat{\phi_{c}}(\xi)= A_\alpha c^{2\alpha+1}|\xi|^{-2\alpha-1}F_\alpha(\xi).
\end{equation}
For $\alpha <-1$, we have
\begin{equation}\label{EQ JL4}
\widehat{\phi_{c}}(\xi)= A_\alpha F_{|\alpha|-1}(\xi).
\end{equation}

We turn our attention to the estimates involving $F_\alpha$ and its derivatives, and begin by noting that
\begin{equation}
\label{EQ JL5}
F_\alpha(\xi) = e^{-c|\xi|}\mathcal{L}[t^\alpha(t+2|\xi|)^\alpha](c),
\end{equation}
where $\mathcal{L}$ denotes the usual Laplace transform, $\mathcal{L}[f](s)=\int_{0}^{\infty}f(t)e^{-st}dt$.  Our estimates for $F_\alpha$ will rely on estimates for the Laplace transform.  The first result in this direction is a lower bound.

\begin{lemma}\label{LEM JL1}
For $\alpha >0$,
\begin{equation}\label{EQ JL6}
\mathcal{L}[t^{\alpha}(t+2|\xi|)^\alpha](c)\geq \max\left\{ \dfrac{(2|\xi|)^\alpha \Gamma(\alpha+1)}{c^{\alpha+1}} , \dfrac{\Gamma(2\alpha+1)}{c^{2\alpha+1}}              \right\}.
\end{equation}
\end{lemma}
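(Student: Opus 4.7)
The goal is to establish two independent lower bounds on $\mathcal{L}[t^\alpha(t+2|\xi|)^\alpha](c) = \int_0^\infty e^{-ct} t^\alpha(t+2|\xi|)^\alpha\, dt$ and then observe that each immediately yields the maximum.

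For the first lower bound, I would exploit the fact that since $\alpha>0$ and $t\geq 0$, the factor $(t+2|\xi|)^\alpha$ is an increasing function of $t$, hence bounded below by its value at $t=0$, namely $(2|\xi|)^\alpha$. Substituting this pointwise estimate under the integral yields
$$\int_0^\infty e^{-ct} t^\alpha (t+2|\xi|)^\alpha\, dt \;\geq\; (2|\xi|)^\alpha \int_0^\infty e^{-ct} t^\alpha\, dt \;=\; \frac{(2|\xi|)^\alpha \Gamma(\alpha+1)}{c^{\alpha+1}},$$
where the last equality is the standard evaluation of the gamma integral after the substitution $u=ct$.

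For the second lower bound, I would instead use that $(t+2|\xi|)^\alpha \geq t^\alpha$ for all $t\geq 0$, $|\xi|\geq 0$, and $\alpha > 0$ (again by monotonicity in the base). This gives
$$\int_0^\infty e^{-ct} t^\alpha (t+2|\xi|)^\alpha\, dt \;\geq\; \int_0^\infty e^{-ct} t^{2\alpha}\, dt \;=\; \frac{\Gamma(2\alpha+1)}{c^{2\alpha+1}},$$
by the same gamma integral identity with exponent $2\alpha$.

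Since the integral dominates each of the two quantities separately, it dominates their maximum, which is precisely the claim. There is no real obstacle here; the only point requiring any care is verifying that both monotonicity inequalities genuinely hold for the full range $\alpha>0$, which is immediate because $t\mapsto t^\alpha$ is monotone on $[0,\infty)$ for positive exponents.
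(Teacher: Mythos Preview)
Your proof is correct and follows exactly the same approach as the paper: both lower bounds come from the pointwise inequalities $(t+2|\xi|)^\alpha\geq(2|\xi|)^\alpha$ and $(t+2|\xi|)^\alpha\geq t^\alpha$, followed by the standard gamma integral evaluation.
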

\begin{proof}
We obtain this bound by combining the inequalities
\[
\mathcal{L}[t^\alpha(t+2|\xi|)^\alpha](c) \geq \int_{0}^{\infty} e^{-ct} (2|\xi|)^\alpha t^{\alpha} dt = \dfrac{(2|\xi|)^\alpha \Gamma(\alpha+1)}{c^{\alpha+1}}
\]
and
\[
\mathcal{L}[t^\alpha(t+2|\xi|)^\alpha](c) \geq \int_{0}^{\infty} e^{-ct}  t^{2\alpha} dt = \dfrac{\Gamma(2\alpha+1)}{c^{2\alpha+1}}.
\]
\end{proof}

We state the upper bounds in the following lemma.
\begin{lemma}\label{LEM JL2}
For $\alpha\in (0,\infty)\setminus\mathbb{N}$ and $l\in\mathbb{N}_0$, the following estimates hold for the Laplace transform $\mathcal{L}[t^\alpha(t+2|\xi|)^{\alpha-l}](c)$.
\begin{enumerate}
\item[(i)] If $0\leq l < \lfloor \alpha \rfloor$, $$\quad\left|\mathcal{L}[t^\alpha(t+2|\xi|)^{\alpha-l}](c)\right|\leq \dfrac{(4|\xi|)^{\alpha-l}\Gamma(\alpha+1)}{c^{\alpha+1}} +\dfrac{2^{\alpha-l}\Gamma(2\alpha+1-l)}{c^{2\alpha+1-l}};$$

\item [(ii)] if $ \lfloor \alpha \rfloor < l < 2\alpha +1  $,$$\left|\mathcal{L}[t^\alpha(t+2|\xi|)^{\alpha-l}](c)\right|\leq \dfrac{2^{3\alpha-2l}|\xi|^{2\alpha-l}}{c} +\dfrac{2^{\alpha-l}\Gamma(2\alpha+1-l)}{c^{2\alpha+1-l}};$$

\item[(iii)] if $l=2\alpha+1$ (thus $\alpha$ is an odd half integer),
$$\left|\mathcal{L}[t^\alpha(t+2|\xi|)^{\alpha-l}](c)\right|\leq \ln\left( 1+|\xi|^{-1} \right) +\dfrac{e^{-c}}{c(1+|\xi|)}.$$
\end{enumerate}
\end{lemma}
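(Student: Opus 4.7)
The plan is to treat each of the three cases by splitting the integral
\[
\mathcal{L}[t^\alpha(t+2|\xi|)^{\alpha-l}](c)=\int_0^\infty e^{-ct}\,t^\alpha(t+2|\xi|)^{\alpha-l}\,dt
\]
at a well-chosen point, applying elementary pointwise bounds on $(t+2|\xi|)^{\alpha-l}$ on each subinterval, and evaluating the resulting integrals through the gamma-function identity $\int_0^\infty e^{-ct}t^\beta\,dt=\Gamma(\beta+1)c^{-\beta-1}$, valid whenever $\beta>-1$. Since the integrand is nonnegative, the absolute value on the left is cosmetic.

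For case (i), where $\alpha-l>0$, the factor $(t+2|\xi|)^{\alpha-l}$ is increasing, so I split at $t=2|\xi|$. On $[0,2|\xi|]$, I bound $(t+2|\xi|)^{\alpha-l}\le(4|\xi|)^{\alpha-l}$ and let the remaining $\int_0^{2|\xi|}e^{-ct}t^\alpha\,dt\le\Gamma(\alpha+1)c^{-\alpha-1}$ produce the first summand; on $[2|\xi|,\infty)$, I bound $(t+2|\xi|)^{\alpha-l}\le(2t)^{\alpha-l}=2^{\alpha-l}t^{\alpha-l}$, combine with $t^\alpha$ to form $t^{2\alpha-l}$, and extend to $[0,\infty)$ to obtain $\Gamma(2\alpha+1-l)c^{-2\alpha-1+l}$ with the factor $2^{\alpha-l}$ out front.

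For case (ii), since $l>\lfloor\alpha\rfloor$ and $l\in\N_0$ we have $\alpha-l<0$, making $(t+2|\xi|)^{\alpha-l}$ decreasing. I again split at $t=2|\xi|$. On the lower piece I use $t^\alpha\le(2|\xi|)^\alpha$ together with a bound of the form $(t+2|\xi|)^{\alpha-l}\le 2^{\alpha-l}(2|\xi|)^{\alpha-l}$ (exploiting both $t+2|\xi|\ge 2|\xi|$ and the sign of the exponent), then apply $\int_0^{2|\xi|}e^{-ct}\,dt\le c^{-1}$; collecting powers of $2$ produces the first summand $2^{3\alpha-2l}|\xi|^{2\alpha-l}/c$. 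On the upper piece I use $(t+2|\xi|)^{\alpha-l}\le t^{\alpha-l}$ and extend to $[0,\infty)$ to obtain the second summand, where the hypothesis $l<2\alpha+1$ is precisely what guarantees $2\alpha-l>-1$ so that the integral $\int_0^\infty e^{-ct}t^{2\alpha-l}\,dt$ converges at $0$.

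For case (iii), $l=2\alpha+1$ forces $2\alpha-l=-1$, a borderline exponent that naively suggests a non-integrable $t^{-1}$ singularity at $0$. The key observation is that $t\le t+2|\xi|$ together with $\alpha>0$ yields $t^\alpha(t+2|\xi|)^{-\alpha-1}\le(t+2|\xi|)^{-1}$, which is integrable everywhere. Splitting at $t=1$: on $[0,1]$ the antiderivative $\ln(t+2|\xi|)$ gives $\int_0^1(t+2|\xi|)^{-1}\,dt=\ln(1+(2|\xi|)^{-1})\le\ln(1+|\xi|^{-1})$, producing the logarithmic term; on $[1,\infty)$ the estimate $t+2|\xi|\ge 1+2|\xi|\ge 1+|\xi|$ lets me factor $(1+|\xi|)^{-1}$ out and compute $\int_1^\infty e^{-ct}\,dt=e^{-c}/c$. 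The main obstacles are the constant bookkeeping in case (ii), where extracting the precise factor $2^{3\alpha-2l}$ requires a judicious combination of the two simultaneous estimates $t+2|\xi|\ge 2|\xi|$ and $t^\alpha\le(2|\xi|)^\alpha$ on $[0,2|\xi|]$, and the recognition in case (iii) that the pointwise bound $t^\alpha(t+2|\xi|)^{-\alpha-1}\le(t+2|\xi|)^{-1}$ is what averts the apparent divergence at $0$.
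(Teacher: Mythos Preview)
Your approach is exactly the paper's: split at $t=2|\xi|$ in (i) and (ii), and in (iii) use the pointwise bound $t^\alpha(t+2|\xi|)^{-\alpha-1}\le(t+2|\xi|)^{-1}$ before splitting (the paper first substitutes $t\mapsto t/|\xi|$ and then splits at $|\xi|^{-1}$, which is your split at $t=1$ in the original variable, so the two computations are the same).

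One small slip in (ii): the inequality you write, $(t+2|\xi|)^{\alpha-l}\le 2^{\alpha-l}(2|\xi|)^{\alpha-l}=(4|\xi|)^{\alpha-l}$, is \emph{false} on $[0,2|\xi|)$ because $\alpha-l<0$ (it would require $t+2|\xi|\ge 4|\xi|$, i.e.\ $t\ge 2|\xi|$). The reasoning you actually cite, ``$t+2|\xi|\ge 2|\xi|$ and the sign of the exponent'', yields only $(t+2|\xi|)^{\alpha-l}\le(2|\xi|)^{\alpha-l}$, which combined with $t^\alpha\le(2|\xi|)^\alpha$ gives the constant $2^{2\alpha-l}$ rather than $2^{3\alpha-2l}$. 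The paper's proof has the identical issue (it ``replaces $t$ by $2|\xi|$'' in the full product, which is not an upper bound for all $\alpha$ in the stated range). The discrepancy affects only the numerical constant, and the form of the estimate --- which is all that is used downstream --- is unchanged.
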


\begin{proof}
For (i), we have the following
\begin{align*}
\mathcal{L}[t^\alpha(t+2|\xi|)^{\alpha-l}](c)  = &\int_{0}^{2|\xi|}e^{-ct} t^\alpha(t+2|\xi|)^{\alpha-l}  dt   +  \int_{2|\xi|}^{\infty}e^{-ct} t^\alpha (t+2|\xi|)^{\alpha-l}   dt \\
\leq &   \int_{0}^{2|\xi|}e^{-ct} t^\alpha(4|\xi|)^{\alpha-l}  dt   +  \int_{2|\xi|}^{\infty}e^{-ct} t^\alpha(2t)^{\alpha-l}   dt \\
\leq &\dfrac{ (4|\xi|)^{\alpha-l}\Gamma(\alpha+1)}{c^{\alpha+1}}  + \dfrac{2^{\alpha-l}\Gamma(2\alpha+1-l)}{c^{2\alpha+1-l}}.
\end{align*}
Inequality (ii) follows from a similar calculation, replacing $t$ with $2|\xi|$ in the first integral below:
\begin{align*}
\mathcal{L}[t^\alpha(t+2|\xi|)^{\alpha-l}](c)  = &\int_{0}^{2|\xi|} e^{-ct}t^\alpha(t+2|\xi|)^{\alpha-l}  dt   +  \int_{2|\xi|}^{\infty}e^{-ct} t^\alpha (t+2|\xi|)^{\alpha-l}   dt \\
\leq &   \int_{0}^{2|\xi|}e^{-ct}2^{3\alpha -2l}|\xi|^{2\alpha -l}  dt   +  \int_{2|\xi|}^{\infty} e^{-ct}2^{\alpha-l}t^{2\alpha-l}   dt \\
\leq &\dfrac{2^{3\alpha -2l}|\xi|^{2\alpha -l}  }{c}  + \dfrac{2^{\alpha-l} \Gamma(2\alpha+1-l)}  {c^{2\alpha+1-l}}.
\end{align*}
Finally, for inequality (iii), we have
\begin{align*}
\mathcal{L}[t^\alpha(t+2|\xi|)^{-\alpha-1}](c) =& \int_{0}^{\infty}e^{-ct}\dfrac{t^\alpha}{(t+2|\xi|)^{\alpha+1}}dt = \int_{0}^{\infty}e^{-c|\xi|t}\dfrac{t^\alpha}{(t+2)^{\alpha+1}}  dt\\
=&\int_{0}^{|\xi|^{-1}}e^{-c|\xi|t}\dfrac{t^\alpha}{(t+2)^{\alpha+1}}dt    +\int_{|\xi|^{-1}}^{\infty}e^{-c|\xi|t}\dfrac{t^\alpha}{(t+2)^{\alpha+1}}dt \\
\leq &  \int_{0}^{|\xi|^{-1}}  (1+t)^{-1}dt   +   \int_{|\xi|^{-1}}^{\infty} e^{-c|\xi|t} (1+t)^{-1}dt\\
\leq & \ln\left(1+|\xi|^{-1}\right) + \dfrac{e^{-c}}{c(1+|\xi|)} \leq \ln\left(1+|\xi|^{-1}\right) +  \dfrac{e^{-c}}{c}.
\end{align*}
\end{proof}

Thus we have the following Lemma which allows us to easily bound the derivatives of $F_\alpha$.

\begin{lemma}\label{LEM JL3}
For $\alpha>0$, and $0\leq k\leq 2\alpha+1$,
\begin{equation}\label{EQ JL7}
\vert F_\alpha^{(k)}(\xi)\vert \leq e^{-c|\xi|}\sum_{l=0}^{k}A_{k,l,\alpha}c^{k-l}\mathcal{L}[t^\alpha(t+2|\xi|)^{\alpha-l}](c),
\end{equation}
where $A_{k,l,\alpha}$ are constants independent of $c$.
\end{lemma}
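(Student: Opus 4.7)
The plan is to differentiate $F_\alpha(\xi)=e^{-c|\xi|}\mathcal{L}[t^\alpha(t+2|\xi|)^\alpha](c)$ directly via the Leibniz product rule, using the Laplace transform representation to handle the integral factor. Since $F_\alpha$ is radial in $\xi$, I will work with $\xi>0$ and use $|\xi|=\xi$; the general case follows because every sign picked up by the chain rule for $|\xi|$ is absorbed into the absolute value in the final estimate. Thus write $F_\alpha(\xi)=e^{-c\xi}G_\alpha(\xi)$, where $G_\alpha(\xi):=\int_0^\infty e^{-ct}t^\alpha(t+2\xi)^\alpha\,dt$.

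First, I would justify differentiating under the integral sign in $G_\alpha$ up to order $k\leq 2\alpha+1$. The formal $l$-th derivative in $\xi$ produces the integrand $2^l\alpha(\alpha-1)\cdots(\alpha-l+1)\,e^{-ct}t^\alpha(t+2\xi)^{\alpha-l}$. For $\xi>0$ and $t>0$, the factor $(t+2\xi)^{\alpha-l}$ is bounded on $[0,2\xi]$ (even when $\alpha-l<0$), and for $t\geq 2\xi$ it is dominated by $t^{\alpha-l}$; the exponential decay $e^{-ct}$ controls the tail as long as $2\alpha-l>-1$, which is exactly $l\leq 2\alpha+1$. This finiteness is captured by Lemma~\ref{LEM JL2}, so a standard dominated convergence argument on difference quotients legitimizes differentiation under the integral, yielding
\begin{equation*}
G_\alpha^{(l)}(\xi)=2^l\alpha(\alpha-1)\cdots(\alpha-l+1)\int_0^\infty e^{-ct}t^\alpha(t+2\xi)^{\alpha-l}\,dt.
\end{equation*}

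Second, applying Leibniz to $F_\alpha=e^{-c\xi}\,G_\alpha$ and using $\frac{d^{k-l}}{d\xi^{k-l}}e^{-c\xi}=(-c)^{k-l}e^{-c\xi}$ gives
\begin{equation*}
F_\alpha^{(k)}(\xi)=e^{-c\xi}\sum_{l=0}^k\binom{k}{l}(-c)^{k-l}\,2^l\alpha(\alpha-1)\cdots(\alpha-l+1)\int_0^\infty e^{-ct}t^\alpha(t+2\xi)^{\alpha-l}\,dt.
\end{equation*}
Taking absolute values and recognizing the integral as $\mathcal{L}[t^\alpha(t+2|\xi|)^{\alpha-l}](c)$ (by symmetry in $\xi$) produces the claimed bound with the $c$- and $\xi$-independent constant $A_{k,l,\alpha}:=\binom{k}{l}\,2^l\,|\alpha(\alpha-1)\cdots(\alpha-l+1)|$.

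The only subtle step is the first one, namely verifying that differentiation under the integral is valid once $l$ exceeds $\lfloor\alpha\rfloor$, where the integrand is no longer increasing in $\xi$ and the formal derivative involves a negative power $(t+2\xi)^{\alpha-l}$. However, since $\xi>0$ keeps $t+2\xi$ uniformly bounded below on any small neighborhood of $\xi$, and since $\alpha>0$ guarantees integrability of $t^\alpha$ near $t=0$, the dominating integrand required by the dominated convergence theorem is furnished by Lemma~\ref{LEM JL2}. The endpoint case $l=2\alpha+1$ (possible only when $\alpha$ is an odd half-integer) is the borderline of integrability and is precisely the logarithmic case of Lemma~\ref{LEM JL2}(iii), which still yields a finite dominating integrand on each compact $\xi$-interval bounded away from $0$.
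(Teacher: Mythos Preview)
Your proposal is correct and follows essentially the same approach as the paper: apply the Leibniz rule to the product $e^{-c|\xi|}\cdot G_\alpha(\xi)$, differentiate $G_\alpha$ under the integral sign, and take absolute values. The paper's own proof is terser, simply stating that differentiation under the integral sign ``is justified by the exponential decay of the integrand,'' whereas you supply the explicit constants $A_{k,l,\alpha}=\binom{k}{l}2^l|\alpha(\alpha-1)\cdots(\alpha-l+1)|$ and a more careful dominated convergence argument invoking Lemma~\ref{LEM JL2}.
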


\begin{proof}
Applying the Leibniz rule to \eqref{EQ JL2}, we see that
\begin{align*}
\vert F_\alpha^{(k)}(\xi)\vert = & e^{-c|\xi|}\sum_{l=0}^{k}A_{k,l,\alpha}c^{k-l}\dfrac{d^l}{d\xi^l}\left(\mathcal{L}[t^\alpha(t+2|\xi|)^{\alpha}](c) \right)(\xi).
\end{align*}
Differentiating under the integral sign, which is justified by the exponential decay of the integrand, and using the triangle inequality yield \eqref{EQ JL7}. 
\end{proof}

\begin{remark}
Lemmas \ref{LEM JL2} and \ref{LEM JL3} combine to show that for $0 \leq k < 2\alpha + 1$, $F^{(k)}_\alpha$ has no singularities, while for $k=2\alpha+1$, a logarithmic singularity is introduced at the origin.
\end{remark}

Our next lemma establishes pointwise estimates for the derivatives of both $\widehat{\phi_{c}}$ and $1/\widehat{\phi_{c}}$.

\begin{lemma}\label{LEM JL4}
Suppose $\alpha>0$ and $c\geq 1$.  If $0\leq k \leq 2\alpha+1$, then we have
\begin{itemize}
\item[(i)] $|\widehat{\phi_{c}}^{(k)}(\xi)| \leq c^{2\alpha+1}|\xi|^{-2\alpha-1-k}e^{-c|\xi|}  \finsum{l}{0}{k} \finsum{l'}{0}{l}A_{k,l,l',\alpha}c^{l-l'}|\xi|^{l}\mathcal{L}[t^\alpha(t+2|\xi|)^{\alpha-l'}](c)$, and
\item[(ii)] $|(1/\widehat{\phi_{c}})^{(k)}(\xi)|\leq  A_{k,\alpha} c^{k(2\alpha+1-\lfloor\alpha\rfloor)}e^{c|\xi|} |\xi|^{2\alpha+1-k} \left[ 1 + O(1)     \right]$, as $|\xi|\to 0$.
\end{itemize}
Here $A_{k,l,l',\alpha}$ and $A_{k, \alpha}$ are positive constants.
\end{lemma}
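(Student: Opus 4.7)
\medskip

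\noindent\textbf{Proof proposal.} The plan is to treat (i) and (ii) separately, deriving (i) directly from \eqref{EQ JL3} via the Leibniz rule and then bootstrapping (ii) from (i) by an induction exploiting the identity $\widehat{\phi_c}\cdot (1/\widehat{\phi_c})\equiv 1$.

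For part (i), I would start from the factorization $\widehat{\phi_c}(\xi)= A_\alpha c^{2\alpha+1}|\xi|^{-2\alpha-1}F_\alpha(\xi)$ in \eqref{EQ JL3} and apply the Leibniz rule,
$$\widehat{\phi_c}^{(k)}(\xi)=A_\alpha c^{2\alpha+1}\sum_{j=0}^{k}\binom{k}{j}\left(\frac{d^{j}}{d\xi^{j}}|\xi|^{-2\alpha-1}\right)F_\alpha^{(k-j)}(\xi).$$
The $j$-th derivative of $|\xi|^{-2\alpha-1}$ is $(-2\alpha-1)(-2\alpha-2)\cdots(-2\alpha-j)\,|\xi|^{-2\alpha-1-j}\operatorname{sgn}(\xi)^{j}$, producing the constants and the factor $|\xi|^{-2\alpha-1-k+(k-j)}$ that accounts for the $|\xi|^{l}$ appearing in the statement (after re-indexing with $l=k-j$). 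For each $F_\alpha^{(k-j)}$ I would substitute the bound supplied by Lemma \ref{LEM JL3}, which contributes the inner sum $\sum_{l'=0}^{l} c^{l-l'}\mathcal{L}[t^{\alpha}(t+2|\xi|)^{\alpha-l'}](c)$. Collecting the $e^{-c|\xi|}$ factor (which comes from $F_\alpha$) and relabeling the constants yields (i).

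For part (ii) I would argue by induction on $k$. The base $k=0$ is just pointwise inversion of \eqref{EQ JL3}: combining it with the lower bound $\mathcal{L}[t^\alpha(t+2|\xi|)^\alpha](c)\geq \Gamma(2\alpha+1)/c^{2\alpha+1}$ from Lemma \ref{LEM JL1} gives
$$|1/\widehat{\phi_c}(\xi)|\leq \frac{|\xi|^{2\alpha+1}e^{c|\xi|}}{A_\alpha\Gamma(2\alpha+1)},$$
which matches (ii) at $k=0$. For $k\geq 1$, differentiating the identity $\widehat{\phi_c}\cdot(1/\widehat{\phi_c})=1$ with Leibniz gives the recursion
$$\left(\frac{1}{\widehat{\phi_c}}\right)^{(k)}(\xi)=-\frac{1}{\widehat{\phi_c}(\xi)}\sum_{j=1}^{k}\binom{k}{j}\widehat{\phi_c}^{(j)}(\xi)\left(\frac{1}{\widehat{\phi_c}}\right)^{(k-j)}(\xi).$$
Plugging the bound from (i) into $\widehat{\phi_c}^{(j)}$, the base-case bound into the leading factor $1/\widehat{\phi_c}$, and the inductive hypothesis into $(1/\widehat{\phi_c})^{(k-j)}$, the exponentials $e^{-c|\xi|}$ from (i) cancel against one factor of $e^{c|\xi|}$ from $1/\widehat{\phi_c}$, leaving the stated $e^{c|\xi|}$ factor in the final bound.

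The main obstacle will be tracking the exponent of $c$ through the induction and verifying that it telescopes to the stated $c^{k(2\alpha+1-\lfloor\alpha\rfloor)}$. Each application of (i) contributes at most a factor $c^{2\alpha+1-\lfloor\alpha\rfloor}$ once the Laplace estimates from Lemma \ref{LEM JL2} are inserted (the worst growth in $c$ occurs in the regime $\lfloor\alpha\rfloor<l'<2\alpha+1$, where $\mathcal{L}[t^\alpha(t+2|\xi|)^{\alpha-l'}](c)$ can be as large as $2^{3\alpha-2l'}|\xi|^{2\alpha-l'}/c$). Combined with the lower bound used in the base case, each differentiation step multiplies the estimate by at most this factor, and the inductive hypothesis supplies $c^{(k-j)(2\alpha+1-\lfloor\alpha\rfloor)}$ from the previous level, summing to $c^{k(2\alpha+1-\lfloor\alpha\rfloor)}$. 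The $|\xi|^{2\alpha+1-k}$ factor and the $[1+O(1)]$ error as $|\xi|\to 0$ follow from isolating the dominant term in each Laplace estimate near the origin and absorbing the subdominant contributions into the $O(1)$ correction.
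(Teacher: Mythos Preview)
Your treatment of (i) matches the paper's exactly: Leibniz applied to \eqref{EQ JL3}, then Lemma~\ref{LEM JL3} for the derivatives of $F_\alpha$.

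For (ii) you take a genuinely different route. The paper writes the closed Fa\`a-di-Bruno-type identity
\[
\left(1/\widehat{\phi_{c}}\right)^{(k)}(\xi) = \left(\widehat{\phi_{c}}(\xi) \right)^{-k-1} \sum_{\gamma\in \Gamma_k} A_\gamma \prod_{l=1}^{k}\widehat{\phi_{c}}^{(\gamma_l)}(\xi),
\]
with $\Gamma_k$ the set of ordered partitions of $k$, and then inserts the simplified bound $|\widehat{\phi_c}^{(j)}(\xi)|\le A_\alpha c^{2\alpha-\lfloor\alpha\rfloor+j}|\xi|^{-2\alpha-1-j}e^{-c|\xi|}[1+O(1)]$ (derived from (i) and Lemma~\ref{LEM JL2}) together with the lower bound from Lemma~\ref{LEM JL1}. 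Your inductive recursion via $\widehat{\phi_c}\cdot(1/\widehat{\phi_c})\equiv 1$ is more elementary and avoids the partition combinatorics; the exponent arithmetic does close, since the $j$-th summand carries $c^{2\alpha-\lfloor\alpha\rfloor+j+(k-j)(2\alpha+1-\lfloor\alpha\rfloor)}=c^{k(2\alpha+1-\lfloor\alpha\rfloor)-(j-1)(2\alpha-\lfloor\alpha\rfloor)}$, which is maximal at $j=1$ and gives exactly the claimed power. The paper's direct formula, on the other hand, makes it transparent which partition $\gamma=(0,\dots,0,k)$ carries the logarithmic singularity when $k=2\alpha+1$. Two small points to tighten: your phrase ``each application of (i) contributes at most a factor $c^{2\alpha+1-\lfloor\alpha\rfloor}$'' is literally true only for the dominant $j=1$ term (for larger $j$ the factor from $\widehat{\phi_c}^{(j)}$ is bigger, but the inductive hypothesis compensates); and you should note that when $k=2\alpha+1$ is an integer, the term $j=k$ in your recursion picks up the logarithmic contribution from Lemma~\ref{LEM JL2}(iii), which must be shown to be subordinate (as the paper does in \eqref{EQ JL12}).
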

\begin{proof}
To see (i), apply the Leibniz rule, triangle inequality, and Lemma \ref{LEM JL3} to \eqref{EQ JL3}.  As a prelude to (ii), we remark that combining (i) with Lemma \ref{LEM JL2} (i) and (ii) reveals that if $0\leq k < 2\alpha+1$,  
\begin{equation}\label{EQ JL8}
|\widehat{\phi_{c}}^{(k)}(\xi)| \leq A_\alpha c^{2\alpha-\lfloor\alpha\rfloor+k}|\xi|^{-2\alpha-1-k}e^{-c|\xi|} \left[1+ O(1)   \right], \quad |\xi|\to 0.
\end{equation}
If $k=2\alpha+1$, we pick up an extra singularity from Lemma \ref{LEM JL2} (iii), in which case we split the double sum  in (i) into three parts, the first of which corresponds to $l'=l=k$, the second to $l'<l , l=k $, and the third to $l'\leq l, l<k$:

\begin{align}
\nonumber |\widehat{\phi_c}^{(k)}(\xi)|\leq & A_\alpha c^{2\alpha+1}e^{-c|\xi|}|\xi|^{-4\alpha-2}\sum_{l=0}^{k}\sum_{l'=0}^{l}A_{k,l,l'}c^{l-l'}|\xi|^{l}\mathcal{L}[t^\alpha(t+2|\xi|)^{\alpha-l'}](c)\\
\nonumber =&  A_\alpha c^{2\alpha+1}e^{-c|\xi|}|\xi|^{-4\alpha-2} \left( |\xi|^{2\alpha+1}\ln(1+|\xi|^{-1}) \right.\\
\nonumber +& \left. \sum_{l'=0}^{k-1}A_{k,l'}c^{2\alpha+1-l'}|\xi|^{2\alpha+1}\mathcal{L}[t^\alpha(t+2|\xi|)^{\alpha-l'}](c)\right. \\
\nonumber +& \left.\sum_{l=0}^{k-1}\sum_{l'=0}^{l}A_{k,l,l'}c^{l-l'}|\xi|^{l}\mathcal{L}[t^\alpha(t+2|\xi|)^{\alpha-l'}](c) \right)\\
\label{EQ JL9}\leq& A_\alpha c^{2\alpha+1}e^{-c|\xi|}|\xi|^{-2\alpha-1}\ln(1+|\xi|^{-1})\nonumber\\ +& A_\alpha c^{4\alpha+1-\lfloor\alpha\rfloor}e^{-c|\xi|}|\xi|^{-4\alpha-2}[1+O(1)], \quad |\xi|\to 0.
\end{align}
To track the largest powers of $c$, we note that there is nothing to do in the logarithmic term; in the second term we use Lemma \ref{LEM JL2} and take $l'=\lfloor\alpha\rfloor$ to obtain $c^{4\alpha+1-\lfloor\alpha\rfloor}$; for the third term we let $l=2\alpha$ and $l'=\lfloor\alpha\rfloor$ to obtain $c^{4\alpha-\lfloor\alpha\rfloor}$.

To prove (ii), we again apply the Leibniz rule to obtain
\begin{equation}\label{EQ JL10}
\left(1/\widehat{\phi_{c}}\right)^{(k)}(\xi) = \left(\widehat{\phi_{c}}(\xi) \right)^{-k-1} \sum_{\gamma\in \Gamma_k} A_\gamma \prod_{l=1}^{k}\widehat{\phi_{c}}^{(\gamma_l)}(\xi),
\end{equation}
where $\Gamma_k$ is the set of increasing non-negative integer partitions of $k$, that is,  $\Gamma_k = \{ (\gamma_1,\dots,\gamma_k)\in \mathbb{N}_{0}^{k} :  \gamma_l \leq \gamma_{l+1}, \sum \gamma_l =k    \}$.  For $0\leq k < 2\alpha+1$, we may plug \eqref{EQ JL3}, \eqref{EQ JL7}, and \eqref{EQ JL8} into \eqref{EQ JL10} to obtain:
\begin{align}
\nonumber\left| \left(1/\widehat{\phi_{c}} \right)^{(k)}(\xi)  \right| \leq & A_\alpha c^{-(2\alpha+1)(k+1)}|\xi|^{(2\alpha+1)(k+1)}\left( F_\alpha(\xi) \right)^{-k-1}\\
\times &\sum_{\gamma\in\Gamma_k}A_\gamma \prod_{l=1}^{k} c^{2\alpha+\gamma_l-\lfloor\alpha\rfloor}|\xi|^{-2\alpha-1-\gamma_l}e^{-c|\xi|}[1+O(1)], \quad|\xi|\to 0.
\end{align}

Now applying \eqref{EQ JL6} and collecting terms provides the desired estimate 
\begin{equation}\label{EQ JL11}
\left| \left(1/\widehat{\phi_{c}} \right)^{(k)}(\xi)  \right| \leq  A_{k,\alpha}c^{k(2\alpha+1-\lfloor\alpha\rfloor)}e^{c|\xi|}|\xi|^{2\alpha+1-k}[1+O(1)], \quad |\xi|\to 0.
\end{equation}


If $k=2\alpha+1$, then the logarithmic singularity in \eqref{EQ JL9} appears only when $\gamma = (0,\dots,0,k)$.  By using \eqref{EQ JL6}, we see that the corresponding term satisfies

\begin{align}
\nonumber \left|\dfrac{\widehat{\phi_{c}}^{(k)}(\xi)}{(\widehat{\phi_{c}}(\xi))^2} \right|\leq & A_\alpha c^{2\alpha+1}e^{c|\xi|}|\xi|^{2\alpha+1}\ln(1+|\xi|^{-1})+A_\alpha c^{4\alpha+1-\lfloor\alpha\rfloor}e^{c|\xi|}[1+O(1)]\\
\label{EQ JL12}=& A_{\alpha} c^{2\alpha+1}e^{c|\xi|}o(1)+A_{\alpha} c^{4\alpha+1-\lfloor\alpha\rfloor}e^{c|\xi|}[1+O(1)], \quad |\xi|\to 0.
\end{align}
Thus the term containing the logarithmic singularity is bounded by the estimate in \eqref{EQ JL11}, and the proof is complete.


\end{proof}

\begin{remark}
The calculations above show that for $0\leq k \leq 2\alpha+1$, $\left( 1/\widehat{\phi_{c}} \right)^{(k)}\in L_\infty[-\pi,\pi]$.
\end{remark}

We are now in position to prove Proposition \ref{PROP l1}.

\begin{proof}[Proof of Proposition \ref{PROP l1}]
Recall that for $j\neq 0$, $a_j(\xi)=\widehat{\phi_{c}}(\xi+2\pi j)/\widehat{\phi_{c}}(\xi)$.  From the Leibniz rule, we have
\begin{equation}\label{EQ JL13}
a_j^{(k)}(\xi)= \sum_{l=0}^{k}A_{k,l} \left( 1/\widehat{\phi_{c}}  \right)^{(l)}(\xi) \widehat{\phi_c}^{(k-l)}(\xi+2\pi j).
\end{equation}
Thus we may use \eqref{EQ JL8} and \eqref{EQ JL11} to obtain
\[
| a_j^{(k)}(\xi)  | \leq g(\xi)\sum_{l=0}^{k} A_{k,l} c^{(l+1)(2\alpha-\lfloor\alpha\rfloor)+k}e^{c|\xi|-c|\xi-2\pi j|} |\xi|^{2\alpha+1-l}|\xi+2\pi j|^{-(2\alpha+1+k-l)},
\]
where $g\in L_\infty[-\pi,\pi]$.  Since $|\xi|\leq \pi(1-\varepsilon)$, $|\xi+ 2\pi j|-|\xi|\geq 2\pi(|j|-1+\varepsilon)$, and $|\xi+2\pi j|\geq \pi(1+\varepsilon)$, we have
\[
| a_j^{(k)}(\xi)  |\leq c^{(k+1)(2\alpha-\lfloor\alpha\rfloor)+k}e^{-2\pi c \varepsilon}e^{-2\pi c (|j|-1)}\sum_{l=0}^{k}A_{k,l,\alpha}\dfrac{(\pi(1-\varepsilon))^{2\alpha+1-l}}{(\pi(1+\varepsilon))^{2\alpha+1+k-l}}.
\]
This is the desired estimate for $1\leq k <2\alpha+1$.  If $k=2\alpha+1$, we must use \eqref{EQ JL9} and \eqref{EQ JL12} to handle the logarithmic term.  This changes $A_{\alpha}(\varepsilon)$; however, it is still true that $A_{\alpha}(\varepsilon)=O(1)$ as $\varepsilon\to 0$.
\end{proof}

An immediate consequence of Proposition \ref{PROP l1} is that the function $s_c$ defined in \eqref{EQ l2} converges for $|\xi|\leq \pi(1-\varepsilon)$ and we can differentiate the series term by term.  In fact, by applying the estimates from \eqref{EQ l3}, we have 
\begin{equation}\label{EQ JL14}
|s_c^{(k)}(\xi)  |\leq A_{k,\alpha}(\varepsilon) c^{(k+1)(2\alpha-\lfloor\alpha\rfloor)+k}e^{-2\pi c \varepsilon},   
\end{equation}
for $0\leq k\leq 2\alpha+1$ and $|\xi|\leq\pi(1-\varepsilon)$, where $A_{k,\alpha}(\varepsilon)=O(1)$ as $\varepsilon\to 0$.

Recalling that $\widehat{L_{c}}(\xi)=(1+s_c(\xi))^{-1}$, we may prove pointwise bounds for $\widehat{L_{c}}^{(k)}$ by applying \eqref{EQ JL14} in a similar manner to \eqref{EQ JL10}.  The next three lemmas prove Proposition \ref{PROP l2}.

\begin{lemma}\label{LEM JL5}
Let $\varepsilon\in[0,1)$ and suppose that $|\xi|\leq \pi(1-\varepsilon)$.  If $0\leq k\leq 2\alpha+1$, then 
\begin{equation}\label{EQ JL15}
|\widehat{L_{c}}^{(k)}(\xi)|\leq A_{k,\alpha}(\varepsilon)c^{2k(2\alpha-\lfloor\alpha\rfloor)+k}e^{-2\pi c \varepsilon},
\end{equation} 
where $A_{k,\alpha}(\varepsilon)=O(1)$ as $\varepsilon\to 0$.
\end{lemma}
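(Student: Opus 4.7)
The plan is to expand $\widehat{L_c}^{(k)}$ via Fa\`a di Bruno's formula applied to the composition $\xi \mapsto 1/(1+s_c(\xi))$ and then invoke the pointwise estimate \eqref{EQ JL14} on the derivatives of $s_c$. This gives
$$
\widehat{L_c}^{(k)}(\xi) \;=\; \sum_{\vec{\gamma}} C_{\vec{\gamma}}\,(1 + s_c(\xi))^{-m-1}\prod_{i=1}^{m} s_c^{(\gamma_i)}(\xi),
$$
where the sum runs over compositions $\vec{\gamma} = (\gamma_1,\ldots,\gamma_m)$ of $k$ into positive parts and the $C_{\vec{\gamma}}$ are combinatorial constants depending only on $k$.

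Controlling the denominator uses the key fact that $\widehat{\phi_c}$ does not change sign, so every $a_j(\xi) \geq 0$, giving $s_c(\xi) \geq 0$ and hence $(1+s_c(\xi))^{-m-1} \leq 1$ uniformly in $\xi$ and $c$. For the product of derivatives, \eqref{EQ JL14} together with $\sum \gamma_i = k$ yields
$$
\Bigl|\prod_{i=1}^m s_c^{(\gamma_i)}(\xi)\Bigr| \;\leq\; A_{\vec{\gamma},\alpha}(\varepsilon)\, c^{(k+m)(2\alpha-\lfloor\alpha\rfloor) + k}\, e^{-2\pi c m \varepsilon}.
$$
The range $1 \leq m \leq k$ then does the remaining bookkeeping: $k + m \leq 2k$ absorbs the $c$-power into $c^{2k(2\alpha-\lfloor\alpha\rfloor)+k}$, while $m \geq 1$ with $\varepsilon \geq 0$ absorbs the exponential into $e^{-2\pi c \varepsilon}$. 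Summing over the finitely many compositions $\vec{\gamma}$ and absorbing constants produces the claimed estimate; the property $A_{k,\alpha}(\varepsilon) = O(1)$ as $\varepsilon \to 0$ is inherited from the analogous property of the constants in \eqref{EQ JL14}, since the $C_{\vec{\gamma}}$ are independent of $\varepsilon$.

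No serious obstacle arises: the argument is essentially mechanical once the Fa\`a di Bruno expansion is written down, and both ingredients (the sign of $\widehat{\phi_c}$ and the estimate on $s_c^{(k)}$) are already in hand from the preceding lemmas. The one subtle point worth flagging is that the exponential factor $e^{-2\pi c \varepsilon}$ in the target bound requires that at least one derivative of $s_c$ appears in every summand of the Fa\`a di Bruno expansion, which is automatic for $k \geq 1$; the case $k = 0$ reduces to the trivial inequality $\widehat{L_c}(\xi) = (1+s_c(\xi))^{-1} \leq 1$.
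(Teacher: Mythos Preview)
Your proposal is correct and follows exactly the route the paper sketches: the paper's one-line proof simply says to apply \eqref{EQ JL14} ``in a similar manner to \eqref{EQ JL10}'', which is precisely the Fa\`a di Bruno expansion of $(1+s_c)^{-1}$ combined with the derivative bounds on $s_c$ and the trivial bound $(1+s_c)^{-m-1}\leq 1$. Your bookkeeping of the $c$-exponent and the exponential factor is accurate (using $2\alpha-\lfloor\alpha\rfloor>0$ and $c\geq 1$), and your remark on the $k=0$ case is apt: as literally stated the exponential factor cannot hold at $k=0$ since $\widehat{L_c}(0)=1$, so the inequality $\widehat{L_c}\leq 1$ is the correct substitute there.
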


Analogous to the argument given in \cite{RiemSiva}, we find that if $|j|\geq 2$ and $\xi\in [(-2j-1)\pi,(-2j+1)\pi]$, then
\[
\widehat{L_{c}}(\xi)=a_{-j}(r)\widehat{L_{c}}(r),
\]
where $r=2\pi j +\xi$.  This means that $r\in [-\pi,\pi]$, so we may use the Leibniz rule together with Proposition \ref{PROP l1} and Lemma \ref{LEM JL5} (letting $\varepsilon =0$) to obtain our next result.

\begin{lemma}\label{LEM JL6}
Let $0\leq k \leq 2\alpha+1$.  If $|j|\geq 2$ and $\xi\in [(-2j-1)\pi,(-2j+1)\pi]$, then
\begin{equation}\label{EQ JL16}
\vert  \widehat{L_{c}}^{(k)}(\xi)  \vert \leq A_{k,\alpha} c^{(2k+1)(2\alpha-\lfloor\alpha\rfloor)+k}e^{-2\pi c (|j|-1)},
\end{equation} 
where $A_{k,\alpha}$ is independent of both $c$ and $j$.
\end{lemma}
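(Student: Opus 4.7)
The plan is to exploit the identity noted in the paragraph immediately preceding the lemma, namely that whenever $|j|\geq 2$ and $\xi \in [(-2j-1)\pi,(-2j+1)\pi]$, setting $r = \xi + 2\pi j \in [-\pi,\pi]$ yields
$$\widehat{L_c}(\xi) = a_{-j}(r)\,\widehat{L_c}(r),$$
which follows from $\widehat{\phi_c}(\xi) = \widehat{\phi_c}(r-2\pi j)$ together with the $2\pi$-periodicity of the denominator $\sum_k \widehat{\phi_c}(\,\cdot\, + 2\pi k)$. Since differentiation in $\xi$ equals differentiation in $r$, I can apply the Leibniz rule to write
$$\widehat{L_c}^{(k)}(\xi) = \sum_{l=0}^{k}\binom{k}{l} a_{-j}^{(l)}(r)\,\widehat{L_c}^{(k-l)}(r).$$

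Next, I bound each factor using results already established. Because $r \in [-\pi,\pi]$, applying Proposition \ref{PROP l1} with $\varepsilon=0$ to $a_{-j}$ (noting that $|{-j}| = |j|$) gives
$$|a_{-j}^{(l)}(r)| \leq A_{l,\alpha}\, c^{(l+1)(2\alpha-\lfloor\alpha\rfloor)+l}\, e^{-2\pi c(|j|-1)},$$
while Lemma \ref{LEM JL5} with $\varepsilon=0$ applied to $\widehat{L_c}^{(k-l)}$ gives
$$|\widehat{L_c}^{(k-l)}(r)| \leq A_{k-l,\alpha}\, c^{2(k-l)(2\alpha-\lfloor\alpha\rfloor)+(k-l)}.$$
Multiplying, the power of $c$ in the $l$-th summand is
$$(l+1)(2\alpha-\lfloor\alpha\rfloor)+l + 2(k-l)(2\alpha-\lfloor\alpha\rfloor)+(k-l) = (2k-l+1)(2\alpha-\lfloor\alpha\rfloor)+k.$$

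Since $\alpha\in(0,\infty)\setminus\N$ forces $2\alpha-\lfloor\alpha\rfloor > 0$, this exponent is maximized at $l=0$, yielding the value $(2k+1)(2\alpha-\lfloor\alpha\rfloor)+k$. Collecting terms in the finite sum and absorbing the binomial coefficients and the $A_{l,\alpha}, A_{k-l,\alpha}$ into a single constant $A_{k,\alpha}$ independent of $c$ and $j$ gives
$$|\widehat{L_c}^{(k)}(\xi)| \leq A_{k,\alpha}\, c^{(2k+1)(2\alpha-\lfloor\alpha\rfloor)+k}\, e^{-2\pi c(|j|-1)},$$
which is the desired estimate.

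There is no real obstacle here: once the product identity $\widehat{L_c}(\xi) = a_{-j}(r)\widehat{L_c}(r)$ is in hand, the proof is a mechanical Leibniz argument feeding off of Proposition \ref{PROP l1} and Lemma \ref{LEM JL5}. The only mild bookkeeping step is verifying that the exponent of $c$ in the Leibniz sum is largest when all derivatives fall on $\widehat{L_c}$ (that is, at $l=0$), which is immediate from $2\alpha-\lfloor\alpha\rfloor>0$.
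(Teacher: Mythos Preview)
Your proof is correct and matches the paper's approach exactly: the paper likewise invokes the identity $\widehat{L_c}(\xi)=a_{-j}(r)\widehat{L_c}(r)$ with $r=\xi+2\pi j\in[-\pi,\pi]$, then applies the Leibniz rule together with Proposition~\ref{PROP l1} and Lemma~\ref{LEM JL5} at $\varepsilon=0$. Your explicit bookkeeping of the exponent of $c$ and the observation that it is maximized at $l=0$ simply fills in details the paper leaves implicit.
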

Our final estimate is for the region $|\xi|\in [(1+\varepsilon)\pi,3\pi]$.  For these intervals, we adapt the argument given for Theorem 2.4 in \cite{RiemSiva}.

\begin{lemma}\label{LEM JL7}
Let $\varepsilon\in [0,1)$ and $0\leq k\leq 2\alpha+1$.  If $|\xi|\in [(1+\varepsilon)\pi,3\pi]$, then
\begin{equation}\label{EQ JL17}
\vert \widehat{L_{c}}^{(k)} (\xi) \vert \leq A_{k,\alpha}(\varepsilon) c^{(2k+1)(2\alpha-\lfloor\alpha\rfloor)+k}e^{-\pi c \varepsilon},
\end{equation}
where $A_{k,\alpha}(\varepsilon)=O(1)$ as $\varepsilon\to 0$.
\end{lemma}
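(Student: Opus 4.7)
The plan is to reduce the estimate on the band $\xi\in[(1+\varepsilon)\pi,3\pi]$ to bounds already established on $[-\pi,\pi]$ by exploiting the $2\pi$-periodicity of the denominator $D(\xi):=\sum_{j\in\Z}\widehat{\phi_c}(\xi+2\pi j)$ appearing in \eqref{EQ l1}; the symmetric band $\xi\in[-3\pi,-(1+\varepsilon)\pi]$ then follows from the evenness of $\widehat{L_c}$. Setting $r:=\xi-2\pi\in[-(1-\varepsilon)\pi,\pi]$ and using $D(\xi)=D(r)$, I write $\widehat{L_c}(\xi)=\widehat{\phi_c}(\xi)/D(r)=a_1(r)\,\widehat{L_c}(r)$, so the Leibniz rule gives
$$\widehat{L_c}^{(k)}(\xi)=\sum_{j=0}^{k}\binom{k}{j}a_1^{(k-j)}(r)\,\widehat{L_c}^{(j)}(r).$$

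For the factor $\widehat{L_c}^{(j)}(r)$, since $|r|\leq\pi$ always, Lemma \ref{LEM JL5} invoked at parameter $\varepsilon_0=0$ yields $|\widehat{L_c}^{(j)}(r)|\leq A_{j,\alpha}\,c^{2j(2\alpha-\lfloor\alpha\rfloor)+j}$, the constant being finite by the $O(1)$ clause of that lemma. For the factor $a_1^{(k-j)}(r)$, I rerun the proof of Proposition \ref{PROP l1} on the asymmetric interval $r\in[-(1-\varepsilon)\pi,\pi]$ using only the shift $j=1$. Since $r+2\pi>0$, one has $|r+2\pi|-|r|\geq 2\pi\varepsilon$ on this interval (with minimum attained at $r=-(1-\varepsilon)\pi$), so the exponential weight $e^{c|r|-c|r+2\pi|}\leq e^{-2\pi c\varepsilon}$ uniformly. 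The polynomial factors $|r|^{2\alpha+1-l}$ and $|r+2\pi|^{-(2\alpha+1+(k-j)-l)}$, together with the auxiliary bounded function $g\in L_\infty[-\pi,\pi]$ absorbing the possible logarithmic singularity at $r=0$, all remain $O(1)$ on this range. Tracking the largest power of $c$ as in Proposition \ref{PROP l1}'s proof then gives
$$|a_1^{(k-j)}(r)|\leq A_{\alpha,k-j}(\varepsilon)\,c^{(k-j+1)(2\alpha-\lfloor\alpha\rfloor)+(k-j)}\,e^{-2\pi c\varepsilon},$$
with $A_{\alpha,k-j}(\varepsilon)=O(1)$ as $\varepsilon\to 0$.

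Substituting these bounds into the Leibniz sum and maximizing the $c$-exponent at $j=k$ produces
$$|\widehat{L_c}^{(k)}(\xi)|\leq A_{k,\alpha}(\varepsilon)\,c^{(2k+1)(2\alpha-\lfloor\alpha\rfloor)+k}\,e^{-2\pi c\varepsilon}\leq A_{k,\alpha}(\varepsilon)\,c^{(2k+1)(2\alpha-\lfloor\alpha\rfloor)+k}\,e^{-\pi c\varepsilon},$$
which is the stated estimate. I expect the main obstacle to be the book-keeping at the right endpoint $\xi=3\pi$ (where $r=\pi$): here Lemma \ref{LEM JL5} is only available at $\varepsilon_0=0$ and contributes no exponential decay, so the full $e^{-\pi c\varepsilon}$ factor must be extracted from $a_1^{(k-j)}(r)$ alone. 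This forces a careful check that every polynomial, logarithmic, and $g$-singularity term appearing in Proposition \ref{PROP l1}'s proof remains uniformly controlled as $r$ ranges over the \emph{asymmetric} interval $[-(1-\varepsilon)\pi,\pi]$ rather than the symmetric one originally treated there.
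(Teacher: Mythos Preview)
Your proposal is correct and follows essentially the same route the paper intends: the paper gives no explicit proof here, merely pointing to ``adapt the argument given for Theorem 2.4 in \cite{RiemSiva}'', and that argument is precisely the periodization trick $\widehat{L_c}(\xi)=a_{1}(r)\widehat{L_c}(r)$ with $r=\xi-2\pi$ that you carry out---the same device used just above for Lemma~\ref{LEM JL6} in the regime $|j|\geq2$. Your bookkeeping of the exponential factor $|r+2\pi|-|r|\geq 2\pi\varepsilon$ on the asymmetric interval is accurate, and in fact yields the slightly sharper decay $e^{-2\pi c\varepsilon}$ rather than the stated $e^{-\pi c\varepsilon}$; your final concern about $r=\pi$ is harmless, since at that endpoint $|r+2\pi|-|r|=2\pi$ and the exponential from $a_1$ alone is more than sufficient.
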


\begin{proof}[Proof of Theorem \ref{THM l1}]
Applying the pointwise estimates from Proposition \ref{PROP l2} establishes the $L_1$ bound. 
\end{proof}

\begin{proof}[Proof of Theorem \ref{THM l2}]
Using \eqref{EQ JL4} and replacing $F_\alpha$ with $F_{|\alpha|-1}$ one obtains the stated bounds by using reasoning similar to that used to establish Theorem \ref{THM l1}.
\end{proof}

\begin{proof}[Proof of Theorem \ref{THM KPoissonFTdecay}]
In the special case of $\alpha=-1$, we get the Poisson kernel, whose Fourier transform is given by 
\[
\widehat{\phi_c}(\xi)=A F_0(\xi)=(A/c)e^{-c|\xi|}.
\]
Using this much simpler formula allow us to simplify our earlier work and get stronger results.  In fact, we see that the analogue of Proposition 4.1 is given by
\[
|a_j^{(k)}(\xi)|\leq A c^{k}e^{c|\xi|-c|\xi+2\pi j|},
\]
where there is no longer a restriction on $k\in\mathbb{N}_0$.  The proof of Theorem \ref{THM KPoissonFTdecay} now follows the same line of reasoning as that used to prove Theorem \ref{THM l1}.
\end{proof}

\begin{proof}[Proof of Theorem \ref{THM l3}]
We begin by noting that $\widehat{L_c}$ is even, so we need only consider the integral
\[
\int_{0}^{\infty}|\widehat{L_c}' (\xi)|d\xi = \int_{0}^{\pi/2}|\widehat{L_c}' (\xi)|d\xi+\int_{\pi/2}^{3\pi/2}|\widehat{L_c}' (\xi)|d\xi+\int_{3\pi/2}^{\infty}|\widehat{L_c}' (\xi)|d\xi =: I+II+III.
\]
For $I$, we use Lemma \ref{LEM JL5} with $\varepsilon = 1/2$ and find that $I \leq A_\alpha c^{2(2\alpha-\lfloor\alpha\rfloor)+1}e^{-\pi c} \leq A'_\alpha$.
The quantity $III$ may be estimated similarly using Lemmas \ref{LEM JL6} and \ref{LEM JL7} (and again letting $\varepsilon =1/2$).  We have
\begin{align*}
III \leq& A_\alpha c^{3(2\alpha-\lfloor\alpha\rfloor)+1} \left(  e^{-\pi c /2} + \sum_{j=1}^{\infty}e^{-2\pi c j}       \right)  \\
\leq &  A_\alpha \left(    c^{3(2\alpha-\lfloor\alpha\rfloor)+1}e^{-\pi c /2}    +  \dfrac{ c^{3(2\alpha-\lfloor\alpha\rfloor)+1}e^{-2\pi c }}{1-e^{-2\pi c}}         \right)   \leq A'_\alpha           
\end{align*}
The last inequality follows from the fact that both terms in parentheses have a global maximum which depends only on $\alpha$.
Finally, we show that $\widehat{L_c}$ is monotone on $[\pi/2,3\pi/2]$, which implies that $II=|\widehat{L_c}(\pi/2) - \widehat{L_c}(3\pi/2)| \leq 1$.  To that end, we use the quotient rule to write
\[
\widehat{L_{c}}'(\xi) = \dfrac{\dsum_{j\neq 0} \left\{ \widehat{\phi_c}'(\xi)\widehat{\phi_c}(\xi+2\pi j) -  \widehat{\phi_c}(\xi)\widehat{\phi_c}'(\xi+2\pi j)      \right\}      }{\left(  \dsum_{j\in\mathbb{Z}}  \widehat{\phi_c}(\xi+2\pi j)          \right)^2}.
\]
We recall that $\widehat{\phi_c}(\xi) = A_{\alpha}c^{\alpha+1/2}|\xi|^{-\alpha-1/2}K_{\alpha+1/2}(c|\xi|),$ and by the formula found in \cite[p. 361]{AandS}, $\widehat{\phi_c}'(\xi)=-A_{\alpha}c^{\alpha+3/2}\sgn(\xi)|\xi|^{-\alpha-1/2} K_{\alpha+3/2}(c|\xi|)$.  These allow us to rewrite the numerator,
\begin{multline}
\sum_{j\neq 0} \left\{ \widehat{\phi_c}'(\xi)\widehat{\phi_c}(\xi+2\pi j) -  \widehat{\phi_c}(\xi)\widehat{\phi_c}'(\xi+2\pi j) \right\} 
 =  A_{\alpha}^2c^{2\alpha+2}|\xi|^{-\alpha  -1/2} \\\times \left\{g_1(\xi) K_{\alpha+1/2}(c|\xi|)  - g_2(\xi)K_{\alpha+3/2}(c|\xi|)       \right\},
\end{multline}
where
\begin{align*}
g_1(\xi)& = \sum_{j=1}^{\infty} \left(  (2\pi j + \xi)^{-\alpha-1/2}K_{\alpha+3/2}(c(2\pi j +\xi)) \right.\\ &-\left. (2\pi j - \xi)^{-\alpha-1/2}K_{\alpha+3/2}(c(2\pi j -\xi))    \right),   \\
g_2(\xi)& = \sum_{j=1}^{\infty}\left(  (2\pi j + \xi)^{-\alpha-1/2}K_{\alpha+1/2}(c(2\pi j +\xi)) \right.\\ &+\left. (2\pi j - \xi)^{-\alpha-1/2}K_{\alpha+1/2}(c(2\pi j -\xi))    \right). 
\end{align*}
From these expressions, we see that $g_2(\xi)>0$ for $\xi\in [\pi/2,3\pi/2]$ and $g_1(\xi)<0$ on $[\pi/2,3\pi/2]$.  The latter is true since $f(x)=x^{-\alpha-1/2}K_{\alpha+3/2}(x)$ is decreasing.  This shows that $\widehat{L_c}$ is decreasing on $[\pi/2,3\pi/2]$ as desired. 
\end{proof}

\begin{proof}[Proof of Lemma  \ref{LEM h3}]
Recall that $\widehat{L_c}$ is non-negative as commented in Section \ref{SECbaxteranalogue}, and Theorems \ref{THM l1} and \ref{THM l2} show that $L_c$ is continuous and integrable for the given range of $\alpha$.  Consequently, $L_c$ is positive definite, and thus for every $x\in\R$,
$$|L_c(x)|\leq|L_c(0)|=1,$$
the final equality coming from \eqref{EQHfundamental}.
\end{proof}

We end the section with the proof of Theorem \ref{THM h3}.

\begin{proof}[Proof of Theorem \ref{THM h3}]
We first consider the case $k=0$.  As in Section 3, write $\widehat{L_c}(\xi) = (1+\sum_{j\neq0}a_j(\xi))^{-1}$.  Proposition \ref{PROP l1} with $\eps=0$ implies that the series converges uniformly for $|\xi|\leq\pi$.  Consequently, $\widehat{L_c}(0) = \underset{\xi\to0}\lim\;\widehat{L_c}(\xi) = 1$.  To see that this limit is 1 requires estimating $\widehat{L_c}(\xi)$ in a slightly different manner than we have so far.  Combining \eqref{EQ JL3} and \eqref{EQ JL5}, we estimate
$$\zsum{j}|a_j(\xi)|\leq\zsumzero{j}\left|\dfrac{\xi}{\xi+2\pi j}\right|^{2\alpha+1}e^{-c(|\xi+2\pi j|-|\xi|)}\left|\dfrac{\mathcal{L}[t^\alpha(t+2|\xi+2\pi j|)^\alpha](c)}{\mathcal{L}[t^\alpha(t+2|\xi|)^\alpha](c)}\right|,$$
which by Lemma \ref{LEM JL1} and Proposition \ref{LEM JL2}(i) is at most
$$|\xi|^{2\alpha+1}\zsumzero{j}\dfrac{1}{|\xi+2\pi j|^{2\alpha+1}}e^{-c(|\xi+2\pi j|-|\xi|)}\left(\dfrac{4\Gamma(\alpha+1)}{\Gamma(2\alpha+1)}c^\alpha|\xi+2\pi j|^\alpha+2^\alpha\right).$$
The series is summable, and uniformly bounded for $|\xi|\leq\pi$, and it follows that $\underset{\xi\to0}\lim\;\widehat{L_c}(\xi) = 1$.

Now for $|k|\geq1$, let $r = 2\pi k+\xi$ with $|\xi|\leq\pi$.  Then we may write
\begin{align*}\widehat{L_c}(r) & = 
\dfrac{\widehat{\phi_c}(\xi+2\pi k)}{\widehat{\phi_c}(\xi)\left[1+\dsum_{j\neq-k}\frac{\widehat{\phi_c}(\xi+2\pi(k+j))}{\widehat{\phi_c}(\xi)}\right]}\\  &= \dfrac{|\xi|^{2\alpha+1}\widehat{\phi_c}(\xi+2\pi k)}{A_\alpha c^{2\alpha+1}F_\alpha(\xi)\left[1+\dfrac{c^{-2\alpha-1}|\xi|^{2\alpha+1}}{F_\alpha(\xi)}\zsumzero{j}\widehat{\phi_c}(\xi+2\pi j)\right]},
\end{align*}
the second equality following from \eqref{EQ JL3}.  By definition, $\underset{\xi\to0}\lim\,F_\alpha(\xi) = F_\alpha(0)\neq0$, and a similar argument to the case $k=0$ above shows that we may allow $\xi$ to tend to 0, and conclude that $\widehat{L_c}(2\pi k)=0$.
\end{proof}


\begin{thebibliography}{1}

\bibitem{AandS}
M. Abramowitz and I. A. Stegun (Eds.), {\em Handbook of mathematical functions: with formulas, graphs, and mathematical tables.} No. 55, Courier Dover Publications, 1972.

\bibitem{BSW} K. Ball, N. Sivakumar, and J. D. Ward, On the sensitivity of radial basis interpolation to minimal data separation distance, {\em Constr. Approx.} \textbf{8}(4) (1992), 401-426.

\bibitem{Baxter}
B. J. C. Baxter, The asymptotic cardinal function of the multiquadratic
$\phi(r)=(r^2+ c^2)^\frac{1}{2}$ as $c\to\infty$, {\em Comput. Math. Appl.}, \textbf{24}(12), (1992), 1-6.

\bibitem{BS}
B. J. C. Baxter and N. Sivakumar, On shifted cardinal interpolation by Gaussians and multiquadrics, {\em J. Approx. Theory } {\bf 87} (1996), no. 1, 36-59.

\bibitem{BLM} R. K. Beatson, J. Levesley, and C. T. Mouat, Better bases for radial basis function interpolation problems,  {\em J. Comput. Appl. Math.} \textbf{236}(4) (2011), 434-446.

\bibitem{BennettSharpley}
C. Bennett and R. Sharpley, {\em Interpolation of Operators}, New York, 1988. 

\bibitem{Buhmann}
M. Buhmann, Multivariate cardinal interpolation with radial-basis functions,
{\em Constr. Approx.} \textbf{6.3} (1990), 225-255.

\bibitem{Buhmann_compact_support}
M. Buhmann, Radial functions on compact support, {\em Proc. Edinburgh Math. Soc. (2)}, {\bf 41}, no. 1 (1998), 33-46. 

\bibitem{Buhmannbook}
M. Buhmann, {\em Radial basis functions: theory and implementations}, Vol. 12, Cambridge University Press, 2003.

\bibitem{buhmannmicchelli}
M. Buhmann and C. A. Micchelli, Multiquadric interpolation improved, {\em Comput. Math. Appl.}, \textbf{24}(12), (1992), 21-25.

\bibitem{DF}
T. Driscoll and B.Fornberg, Interpolation in the limit of increasingly flat radial basis functions, {\em Comput. Math. Appl.}, {\bf 43} (2002), 413-422.

\bibitem{FM} G. E. Fasshauer and M. J. McCourt, Stable evaluation of Gaussian radial basis function interpolants, {\em SIAM J. Sci. Comput.} \textbf{34}(2) (2012), A737-A762.

\bibitem{FornFlyerBook} B. Fornberg and N. Flyer, {\em A Primer on Radial Basis Functions with Applications to the Geosciences}, Vol. 87. SIAM, 2015.

\bibitem{FFL} B. Fornberg, E. Larsson, and N. Flyer, Stable computations with Gaussian radial basis functions, {\em SIAM J. Sci. Comput.} \textbf{33}(2) (2011), 869-892.

\bibitem{FL} E. Larsson and B. Fornberg, Theoretical and computational aspects of multivariate interpolation with increasingly flat radial basis functions, {\em Comput.  Math. Appl.} \textbf{49}(1) (2005), 103-130.

\bibitem{FW} B. Fornberg and G. B. Wright, Stable computation of multiquadric interpolants for all values of the shape parameter, {\em Comput. Math. Appl.} \textbf{48}(5) (2004), 853-867.

\bibitem{FHNWW} E. Fuselier, T. Hangelbroek, F. J. Narcowich, J. D. Ward, and G. B. Wright, Localized bases for kernel spaces on the unit sphere, {\em SIAM J. Numer. Anal.} \textbf{51}(5) (2013), 2538-2562.

\bibitem{HMNW} T. Hangelbroek, W. Madych, F. Narcowich and J. Ward, Cardinal
interpolation with Gaussian kernels, {\em J. Fourier Anal. Appl.} \textbf{18}
(2012), 67-86.

\bibitem{HNRW} T. Hangelbroek, F.J. Narcowich, C. Rieger, and J. D. Ward, An inverse theorem on bounded domains for meshless methods using localized bases. arXiv preprint arXiv:1406.1435 (2014).

\bibitem{HNSW1} T. Hangelbroek, F. J. Narcowich, and J. D. Ward, Kernel approximation on manifolds I: Bounding the Lebesgue constant, {\em SIAM J. Math. Anal.} \textbf{42}(4) (2010), 1732-1760.

\bibitem{HNSW2} T. Hangelbroek, F. J. Narcowich, X. Sun, and J. D. Ward, Kernel approximation on manifolds II: the $L_\infty$ norm of the $L_2$ projector, {\em SIAM J. Math. Anal.} \textbf{43}(2) (2011), 662-684.

\bibitem{ka}
Y. Katznelson, {\em An Introduction to Harmonic Analysis, Third Edition}, Dover, 2004.

\bibitem{Ledford}
J. Ledford, On the convergence of regular families of cardinal interpolators,
{\em Adv. Comput. Math.} {\bf 41} (2015), no. 2, 357-371.

\bibitem{Ledford_ellp}
J. Ledford, Convergence properties of spline-like cardinal interpolation operators acting on $\ell^p$ data, (submitted) arXiv:1312.4062, 2013. 

\bibitem{MadychMisc} W. R. Madych, Miscellaneous error bounds for multiquadric and related interpolators, {\em Comput. Math. Appl.} \textbf{24}(12) (1992), 121-138.

\bibitem{MRR} 
M. J. Marsden, F. B. Richards, and S. D. Riemenschneider, Cardinal spline interpolation operators on $\ell^p$ data, {\em Indiana Univ. Math. J.} \textbf{24} (1975), 677-689; Erratum, {\em ibid.}, \textbf{25} (1976), 919.

\bibitem{NRW} F.J. Narcowich, S. T. Rowe, and J. D. Ward, A novel Galerkin method for solving PDEs on the sphere using highly localized kernel bases, arXiv preprint, arXiv:1404.5263 (2014).

\bibitem{RS2} 
S. D. Riemenschneider and N. Sivakumar, Cardinal interpolation by Gaussian functions: a survey, {\em J. Anal.} \textbf{8} (2000), 157-178.

\bibitem{RiemSiva} 
S. D. Riemenschneider and N. Sivakumar, On the Cardinal-interpolation Operator Associated with the One-dimensional
Multiquadric, {\em East J. Approx.} \textbf{7}, no. 4 (1999), 485-514.

\bibitem{RS3} 
S. D. Riemenschneider and N. Sivakumar, On cardinal interpolation by Gaussian radial-basis functions: properties of fundamental functions and estimates for Lebesgue constants, {\em J. Anal. Math.} {\bf 79} (1999), 33-61.

\bibitem{Schoenberg} I. J. Schoenberg, {\em Cardinal Spline Interpolation},  Vol. 12. Society for Industrial and Applied Mathematics, Philadelphia, 1973.

\bibitem{Watson} 
G. N. Watson, {\em A treatise on the theory of Bessel functions, Second Edition}, Cambridge University Press, 1944.

\bibitem{Wendland_ACOM}
H. Wendland, Piecewise polynomial, positive definite and compactly supported radial functions of minimal degree, {\em Adv. Comput. Math.} {\bf 4} (1995), 389-396.

\bibitem{Wendland}
H. Wendland, {\em Scattered data approximation}, Vol. 17, Cambridge
University Press, 2005.

\end{thebibliography}
\end{document}